\documentclass{amsart}

\usepackage{amssymb, mathtools, delimset}
\DeclarePairedDelimiter\ceil{\lceil}{\rceil}
\DeclarePairedDelimiter\floor{\lfloor}{\rfloor}

\usepackage{aliascnt}

\usepackage{array, booktabs, enumitem}
\setlist{itemsep=4pt, topsep=0pt, leftmargin=17pt}

\usepackage{xcolor}
\definecolor{BIT}{cmyk}{1, 0, 1, 0}
\usepackage[numbers]{natbib}
\usepackage[
pagebackref,
colorlinks=true,
linkcolor=blue,
citecolor=BIT,
urlcolor=blue,
linktoc=all
]{hyperref}
\usepackage[capitalize, nameinlink, noabbrev]{cleveref}

\crefname{def}{Def.}{Defs.}
\Crefname{def}{Def.}{Defs.}
\creflabelformat{def}{~\upshape(#2#1#3)}
\crefname{ineq}{Ineq.}{Ineqs.}
\Crefname{ineq}{Ineq.}{Ineqs.}
\creflabelformat{ineq}{~\upshape(#2#1#3)}
\makeatletter
\newcommand\creflabel[2][\@currentcounter]{%
\crefalias{\@currentcounter}{#1}\label{#2}}
\makeatother

\makeatletter
\let\NAT@orig@test\NAT@test
\renewcommand{\NAT@test}[1]{%
\@ifnum{#1=\@ne}{%
\@ifnum{\NAT@ctype=\z@}{%
\NAT@hyper@{\NAT@orig@test{#1}}%
}{\NAT@orig@test{#1}}%
}{\NAT@orig@test{#1}}%
}
\makeatother

\hypersetup{
pdftitle={Schur positivity of three-legged spiders},
pdfauthor={David G.L. Wang and Watson Z.Y. Wang},
pdfkeywords={chromatic symmetric function,
Schur positivity,
spider graph,
Pieri rule}
}

\allowdisplaybreaks[2]
\numberwithin{equation}{section}
\numberwithin{table}{section}

\newtheorem{theorem}{Theorem}[section]
\newaliascnt{lemma}{theorem}
\newtheorem{lemma}[lemma]{Lemma}
\aliascntresetthe{lemma}
\newaliascnt{proposition}{theorem}
\newtheorem{proposition}[proposition]{Proposition}
\aliascntresetthe{proposition}
\newaliascnt{corollary}{theorem}
\newtheorem{corollary}[corollary]{Corollary}
\aliascntresetthe{corollary}

\DeclareRobustCommand{\Coeff}[2]{\brk[s]1{#1}#2}
\newcommand{\gs}{\succeq_s}
\newcommand{\spos}{\gs 0}
\newcommand{\epos}{\succeq_e 0}

\crefname{table}{Tab.}{Tabs.}
\Crefname{table}{Tab.}{Tabs.}
\crefname{equation}{Eq.}{Eqs.}
\Crefname{equation}{Eq.}{Eqs.}
\crefformat{pieri}{#2Pieri's rule#3}
\Crefformat{pieri}{#2Pieri's rule#3}

\crefformat{section}{#2\S#1#3}
\Crefformat{section}{#2Section~#1#3}
\crefrangeformat{section}{#3\S#1#4\crefrangeconjunction#5\S#2#6}
\Crefrangeformat{section}{#3Sections~#1#4\crefrangeconjunction#5#2#6}
\crefmultiformat{section}{#2\S#1#3}
{\crefpairconjunction#2\S#1#3}
{\crefmiddleconjunction#2\S#1#3}
{\creflastconjunction#2\S#1#3}
\Crefmultiformat{section}{#2Sections~#1#3}
{\crefpairconjunction#2#1#3}
{\crefmiddleconjunction#2#1#3}
{\creflastconjunction#2#1#3}
\crefalias{subsection}{section}

\newcommand{\creflastconjunction}{, and\nobreakspace}
\newcommand{\crefrangeconjunction}{--}

\title[Schur positivity of spiders]
{Schur positivity of three-legged spiders}

\author[D.G.L. Wang]{David G.L. Wang}
\address{School of Mathematics and Statistics,
Beijing Institute of Technology,
Beijing 102400,
China}
\email{glw@bit.edu.cn}
\thanks{The work of David G.L. Wang was supported
by the General Program of the National Natural Science Foundation of China
(Grant No.~12171034).}

\author[W.Z.Y. Wang]{Watson Z.Y. Wang\textsuperscript{*}}
\address{School of Mathematics and Statistics,
Beijing Institute of Technology,
Beijing 102400,
China}
\email{watsonzywang@bit.edu.cn}
\thanks{\textsuperscript{*}Corresponding author.}

\subjclass[2020]{Primary 05E05; Secondary 05A15,
05C15}
\keywords{chromatic symmetric function,
Schur positivity,
spider graph,
Pieri's rule}
\date{}

\begin{document}
\begin{abstract}
We give a complete classification of Schur positivity for three-legged spiders. Every spider with at least two even legs is Schur positive, whereas every spider with three odd legs has a negative Schur coefficient. If $a$ is even and $b,c$ are odd, then the spider $S(a,b,c)$ is Schur positive if and only if $a\le 5b+5c+2$. Whenever Schur positivity fails, there is exactly one negative Schur coefficient, whose value we determine explicitly. These results extend those of Thibon and Wang for the families $S(a,2,1)$ and $S(a,4,1)$, and those of Wang and Wang for $S(a,b,2)$. The proof combines known Schur-positivity results for clique-spiders with Pieri's rule and simultaneous induction.
\end{abstract}
\maketitle

\section{Introduction and main results}\label{sec:introduction}

\citet{Sta95} introduced the \emph{chromatic symmetric function}
of a finite simple graph~$G$ as
\[
X_G
=\sum_{\kappa\text{ proper}}\prod_{v\in V(G)}x_{\kappa(v)},
\]
where the sum is over all proper colorings of~$G$ by positive integers.
It refines the chromatic polynomial by recording the multiplicities
of the colors.
A graph is \emph{$e$-positive} if its chromatic symmetric function
has nonnegative coefficients in the elementary basis,
and \emph{Schur positive} if it has nonnegative coefficients in the Schur basis.
The \emph{Pieri rule} implies that~$e$-positivity is stronger
than Schur positivity.
For symmetric functions~$F$ and~$H$,
we write $F\gs H$ if~$F-H$ is Schur positive.
For any graph $G$,
we write $G\spos$ if $X_G\spos$.

Recent developments have changed the direction
of the positivity study in general.
\citet{Hik26} proved that every unit interval graph is~$e$-positive,
resolving the Stanley--Stembridge conjecture.
For Schur positivity, \citet{Sta98} recorded the conjecture,
attributed to Gasharov,
that every claw-free graph is Schur positive.
This was disproved independently by \citet{MM26X} and \citet{Pra26X},
who found the same pair of counterexamples.
These developments leave the classification of Schur-positive graphs
outside the unit interval class as a natural problem.

We study three-legged spiders,
the trees with a single branching vertex of degree~$3$.
For positive integers~$a,b,c$,
let~$S(a,b,c)$ be the tree obtained by identifying one endpoint
of each of three paths of lengths~$a,b,c$,
respectively.
Thus the leg lengths count edges and~$S(a,b,c)$ has $a+b+c+1$ vertices.
Permuting the leg lengths does not change its isomorphism type.
These graphs have the simplest branching structure
among trees that are not paths,
but they already exhibit both Schur positivity and its failure.

Spiders also play a role in the study of~$e$-positive trees.
\citet[Lem.~12]{DSv20} related connected partitions of a tree
to those of a spider whose leg lengths are the orders
of the components remaining after a vertex is deleted.
They conjectured that no tree of maximum degree at least~$4$ is~$e$-positive.
\citet{Zhe22} proved the conjecture for maximum degree at least~$6$,
and \citet{Tom26} extended it to maximum degree at least~$5$
and proved that every four-legged spider fails to be~$e$-positive.
The three-legged case therefore remains central
to the positivity problem for trees.
Schur positivity is a useful part of this problem
because it separates obstructions to both positivity properties
from the finer obstructions to~$e$-positivity alone.

Earlier results reveal a strong dependence of Schur positivity
on the parities of the leg lengths.
\citet[Thm.~2.3 and Eg.~2.4]{WW20}
proved that every spider with at least three odd legs fails to be Schur positive,
using the stable partition criterion of \citet[Prop.~1.5]{Sta98}.
\citet{WW23-DAM} classified the families $S(a,1,1)$ and $S(a,3,1)$;
in each family,
Schur positivity holds precisely for a finite interval of even values of $a$.
In contrast,
\citet[Thms.~4.2 and~4.6]{TW23X} proved Schur positivity
for $S(a,2,1)$ and $S(a,4,1)$,
and \citet[Thm.~9.9]{WW26X} proved it for every spider $S(a,b,2)$,
using the result for $S(a,2,1)$ as a base case.

Even these restricted families required substantial arguments.
Their $e$-positivity classifications were already known,
and \citet[\S1]{WZ25} explicitly noted
the difficulty of extending the Schur-positivity argument
from $S(a,2,1)$ to $S(a,4,1)$.

Our original aim was to extend the results of \citet{TW23X}
for~$S(a,2,1)$ and~$S(a,4,1)$ to broader families of three-legged spiders.
In the computed examples,
many positive Schur coefficients were large and appeared to grow rapidly
as the leg lengths increased.
These observations encouraged us to seek a general classification,
but until the present work we had no criterion covering arbitrary leg lengths.
AI-assisted exploration of proof strategies helped us develop the arguments
that yield the following complete classification;
see \cref{sec:ai-use} for the roles of the tools used.

\begin{theorem}[Complete classification]\label{thm:classification}
Let $G=S(a,b,c)$ be a spider,
where $a,b,c\ge 1$.
\begin{enumerate}[label=\textup{(\roman*)}]
\item If~$a,b,c$ are all odd,
then~$G$ is not Schur positive.
\item
If only one leg is even,
say, $a$ is even,
then $G\spos$ if and only if $a\le 5b+5c+2$.
\item
If at least two of~$a,b,c$ are even,
then $G\spos$.
\end{enumerate}
\end{theorem}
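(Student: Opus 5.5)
Part (i) follows from the result of \citet[Thm.~2.3 and Eg.~2.4]{WW20}. In a spider with three odd legs, the stable partition criterion of \citet[Prop.~1.5]{Sta98} fails for the partition $\lambda$ whose column lengths come from a maximum stable set together with the forced greedy structure. Concretely, I would exhibit the shape $\lambda=(n/2,n/2)$ (with $n=a+b+c+1$ even) or its transpose-type analogue. I would then check that $S(a,b,c)$ has no stable partition of type $\lambda'$. The reason is that a tree with three odd legs has no perfect matching whose complement splits appropriately: its two color classes have unequal sizes. Stanley's criterion then forces $[s_{\lambda}]X_G<0$.

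For (ii) and (iii), I would use the deletion–contraction style relation available for trees. Cutting the leg $a$ at the branch vertex expresses $X_{S(a,b,c)}$ in terms of $X_{P}$ for paths and of clique-spiders (spiders whose centre is blown up into a clique). Schur positivity of the clique-spiders is known from the earlier literature and may be assumed. The concrete step is the triple-deletion identity
\[
X_{S(a,b,c)}=X_{S(a-1,b,c+1)}+\bigl(X_{P_a}\,X_{S'(b,c)}-X_{P_{a-1}}\,X_{S'(b,c+1)}\bigr),
\]
where $S'$ denotes a suitably shortened spider. Then:
\begin{itemize}
\item Each product $X_{P_k}X_H$ is expanded via the $e$-expansion of paths and Pieri's rule, giving explicit Schur expansions of the correction terms.
\item I would run a simultaneous induction on $a+b+c$ over the three parity classes. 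The base cases are $S(a,2,1)$, $S(a,4,1)$ from \citet{TW23X} and $S(a,b,2)$ from \citet{WW26X}.
\item At each step I would show that the correction term is Schur positive except possibly at the single shape $\mu=(2^{k},1^{n-2k})$, or its two-row analogue, where the odd-leg obstruction lives.
\end{itemize}

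For (ii) the key computation is the exact coefficient of that single shape. I would express it as a function of $a,b,c$ using a sign-reversing involution on stable partitions (equivalently, counting special rim hook tabloids). I expect the outcome to be a linear form like $(5b+5c+2-a)\cdot C(b,c)$ with $C(b,c)>0$. This yields both directions of the criterion $a\le 5b+5c+2$ and the uniqueness of the negative coefficient when it fails. For (iii), the same formula with two even legs should show that the critical coefficient is always nonnegative.

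The main obstacle is this step: isolating the one potentially negative coefficient and computing it in closed form, while bounding every other Schur coefficient below by a positive quantity that survives the induction. My first attempt would be to strengthen the induction hypothesis to a quantitative lower bound, $[s_\lambda]X_G\ge$ a product of path counts, for all $\lambda$ except the critical one. This should let the Pieri contributions dominate the negative corrections.
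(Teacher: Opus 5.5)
\textbf{Part (i).} This is handled as in the paper, by citing \citet{WW20}, but the details should be stated correctly. With $N=2h$, the bipartition gives a stable partition of type $(h+1,h-1)$. The type $(h,h)\unlhd(h+1,h-1)$ is not realized, because a connected bipartite graph has a unique $2$-colouring up to swapping. No transposed shape $\lambda'$ enters.

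\textbf{The gap is in (ii) and (iii).} First, the critical shape is misidentified. For $a$ even and $b,c$ odd it is $(h,h,1)$ with $h=(a+b+c)/2$, not $(2^k,1^{n-2k})$ or a two-row shape. The special ribbon tabloid count gives $[s_{hh1}]X_G=(5b+5c+2-a)/2$. That computation proves only the necessity of $a\le 5b+5c+2$, not ``both directions''. In (iii), the sign of one coefficient says nothing about Schur positivity.

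Everything therefore rests on controlling all other coefficients, and your engine for that fails. Any identity with main term $X_{S(a-1,b,c+1)}$ must read $X_{S(a,b,c)}=X_{S(a-1,b,c+1)}+X_{P_a}X_{P_{b+c+1}}-X_{P_{c+1}}X_{P_{a+b}}$. This follows from triple deletion after adding the edge joining the centre's neighbours on legs $a$ and $c$. It has two defects:
\begin{enumerate}
\item It preserves the number of vertices, so it cannot drive an induction on $a+b+c$.
\item Its correction term is not positive away from a single shape. For $(a,b,c)=(4,1,1)$ it equals $12e_{43}-10e_{52}-2e_{421}-8e_{322}+6e_{331}+2e_{3211}$, whose Schur coefficients at $s_{331}$ and $s_{3211}$ are $-6$ and $-4$.
\end{enumerate}
The fallback, lower bounds by ``products of path counts'', is not a mechanism. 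Your base cases $S(a,2,1)$, $S(a,4,1)$, $S(a,b,2)$ all have two even legs, so they seed nothing for (ii). Also, clique-spiders here carry cliques at the feet, not at a blown-up centre.

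\textbf{What is missing} is a recurrence with no subtracted term, together with a device for absorbing the one negative coefficient of smaller spiders.
\begin{enumerate}
\item The paper applies the clique--path formula of \citet{QTW26} to obtain $X_{S(a,b,c)}=C_a(b,c)+\sum_{i=2}^{a-1}(i-1)e_iX_{S(a-i,b,c)}$, where $C_a(b,c)=X_{S^{1bc}_{a11}}/(a-1)!$.
\item Schur positivity of $C_a(b,c)$ for all $b,c$ is not among the imported clique-spider results, which cover only leg multisets $\{0,0,1\}$. The paper proves it with the arithmetic-progression recurrence for clique-spiders and a bootstrap.
\item This propagates positivity from $S(1,b,c)$ to $S(a,b,c)$, which reduces (iii) to $S(a,b,1)$ with $a,b$ even.
\item The smaller spiders in the recurrence can themselves fail positivity: all-odd ones at $s_{hh}$, one-even ones at $s_{hh1}$. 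So the paper inducts simultaneously on strengthened statements with explicit corrections: $X_G-s_{(h+1)(h-1)}+s_{hh}\succeq_s 0$ for $a$ odd, $X_G-s_{(h+1)h}-\delta_a(b,c)s_{hh1}\succeq_s 0$, and $X_G+2\delta_a(b-1,c)e_1s_{(h-1)^21}\succeq_s 0$ for $b$ even. Here $\delta_a(x,y)=\lceil(5x+5y+2-a)/2\rceil$.
\item Pieri comparisons such as $e_{i-1}s_{\lambda1}\succeq_s e_is_\lambda$ let adjacent recurrence terms pay for these deficits.
\item The base family $S(a,1,1)$ needs a full Schur analysis of $e_1X_{n-1}-e_2X_{n-2}$.
\end{enumerate}
Your plan contains none of these ingredients or a substitute for them.
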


The classification has the following immediate consequence.

\begin{corollary}\label{cor:unit-main}
If $a\ge b\ge 1$ and~$b$ is even,
then~$X_{S(a,b,1)}\spos$.
\end{corollary}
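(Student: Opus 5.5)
This follows directly from \cref{thm:classification}, once we sort the spider $S(a,b,1)$ by the parities of its legs. The plan is to split according to the parity of~$a$, with the hypotheses $b$ even and $c=1$ odd fixed throughout.

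\emph{Case $a$ even.} Then $a$ and $b$ are both even, so at least two legs are even. Part~(iii) of \cref{thm:classification} gives $S(a,b,1)\spos$ directly. The assumption $a\ge b$ is not needed here.

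\emph{Case $a$ odd.} Now $b$ is the only even leg, and $a$ and $c=1$ are odd. Part~(ii), applied with the even leg $b$ in the role of the distinguished leg (legs may be permuted without changing the isomorphism type), says that $S(a,b,1)\spos$ if and only if
\[
b\le 5a+5\cdot 1+2=5a+7 .
\]
Since $a\ge b$, we have $b\le a<5a+7$, so the inequality holds and $S(a,b,1)\spos$.

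The only points to check are that the relabeling of legs is legitimate and that the threshold inequality in part~(ii) is read with the even leg on the left-hand side. Both are immediate, so no step here is a genuine obstacle. The substantive work lies entirely in \cref{thm:classification} itself.
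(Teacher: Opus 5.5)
Your proposal is correct and matches the paper's proof: for even $a$ you apply \cref{thm:classification}(iii), and for odd $a$ you apply \cref{thm:classification}(ii) to $S(b,a,1)$, using $b\le a<5a+7$. Your remark that $a\ge b$ is unused in the even case, and far stronger than needed in the odd case, is also accurate.
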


\begin{proof}
For odd~$a$,
apply \cref{thm:classification}(ii) to~$S(b,a,1)$,
using $b\le a\le 5a+7$.
For even~$a$, apply \cref{thm:classification}(iii).
\end{proof}

The $b=2$ and $b=4$ cases in \cref{cor:unit-main}
were confirmed by \citet{TW23X} using noncommutative symmetric functions.
For $3$-legged spiders that are not Schur positive,
we determine the precise form of their failure of Schur positivity:
each of them has a unique negative Schur coefficient.

\begin{theorem}\label{thm:refined-bounds}
Let~$a,b,c\in \mathbb Z_{\ge 1}$ with $b,c$ odd,
and $h=\floor{(a+b+c+1)/2}$.
Suppose that the spider $G=S(a,b,c)$ is not Schur positive.
Then $X_G$ has a unique negative Schur coefficient.
Moreover,
either (i) $a$ is odd and $\Coeff{s_{hh}}{X_G}=-1$,
or (ii) $a$ is even and $\Coeff{s_{hh1}}{X_G}=(5b+5c+2-a)/2<0$.
\end{theorem}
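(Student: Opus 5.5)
We would organize the proof around the same simultaneous induction that drives \cref{thm:classification}, but tracking, rather than discarding, the Schur coefficients that can go negative. The starting point is the triple-deletion / leg-shortening recurrence for chromatic symmetric functions of spiders. Expressed through paths $P_n$ and clique-spiders, it writes $X_{S(a,b,c)}$ as $X_{S(a-2,b,c)}$ plus a combination of terms of the form $X_{P_i}X_{P_j}$ or clique-spider functions. These auxiliary functions are Schur positive by the known results, and Pieri's rule controls how their products with $e$-functions distribute over Schur functions. The key claim we would establish is the following. Every Schur coefficient $\Coeff{s_\lambda}{X_G}$ with $\lambda\notin\{(h,h),(h,h,1)\}$ is a sum of a nonnegative quantity coming from the Schur-positive pieces, plus a term that is already nonnegative by induction. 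This reduces uniqueness of the negative coefficient to showing that the only shapes where cancellation can occur are the two-row and near-two-row rectangles dictated by Stanley's stable-partition obstruction.

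For that reduction, we would note that $\lambda=(h,h)$ (for $n=a+b+c+1$ even) or $\lambda=(h,h,1)$ (for $n$ odd) are exactly the shapes detecting proper $2$-colorings with balanced color classes. In a tree, the bipartition is unique. When $a,b,c$ are all odd, $n$ is even and the two color classes of $S(a,b,c)$ have sizes differing by $2$. So no balanced stable partition of type $(h,h)$ exists, while the Kostka-type expansion forces a contribution of $-1$. This gives case (i) by a direct computation of $\Coeff{s_{hh}}{X_G}$. We would carry out that computation by expressing the coefficient via the Jacobi–Trudi determinant $s_{hh}=e_{2}^{\ast}$-type expansion, $\Coeff{s_{hh}}{X_G}=\Coeff{e\text{-terms}}{\cdot}$. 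Equivalently, we would count acyclic orientations/stable partitions with weights $\pm1$ using Stanley's formula restricted to partitions with at most two blocks of size $\ge 2$. For a tree, this count is explicit.

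For case (ii), $a$ even and $b,c$ odd, $n$ is odd, and the relevant shape is $(h,h,1)$. We would compute $\Coeff{s_{hh1}}{X_G}$ exactly by induction on $a$ in steps of $2$ using the recurrence. The difference $\Coeff{s_{hh1}}{X_{S(a,b,c)}}-\Coeff{s_{hh1}}{X_{S(a-2,b,c)}}$ should equal $-1$, coming from one boundary Pieri term. The base case $a=0$ or $a=2$ is then a path-like or small spider whose coefficient equals $(5b+5c+2)/2$ or $(5b+5c)/2$. This would be verified using the known formulas for $S(a,b,2)$ and $S(a,1,1)$ and $S(a,3,1)$, possibly with a second induction on $b$ and $c$. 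The sign condition $<0$ is then equivalent to $a>5b+5c+2$, matching \cref{thm:classification}(ii).

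The main obstacle is the uniqueness statement: showing that every other coefficient stays nonnegative even when $a$ is large. Here we would reuse the inductive Schur-positivity argument behind \cref{thm:classification}(ii). The plan is to prove a strengthened statement, $X_G+\epsilon\, s_{\mu}\spos$ with $\mu\in\{(h,h),(h,h,1)\}$ and $\epsilon$ the negative of the computed coefficient. This statement is preserved by the recurrence, because the shapes $(h,h)$ and $(h,h,1)$ for $n-2$ vertices map under Pieri's rule only into shapes $(h,h)$ and $(h,h,1)$ or strictly dominated, positively covered shapes. The delicate part is checking that the Pieri image of the previous negative coefficient is absorbed by positive contributions from the clique-spider terms in every shape except the new critical one.
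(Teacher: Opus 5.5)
Your identification of the critical shapes, and the value $\Coeff{s_{hh}}{X_G}=-1$ for three odd legs, are fine. The core of the statement, however, is not established: that every other Schur coefficient is nonnegative. The route you sketch for it breaks down exactly where the theorem has content.

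\textbf{The even-$a$ case.} For even $a$, ``$G$ is not Schur positive'' means $\delta_a(b,c)<0$. The argument behind \cref{thm:classification}(ii) (\cref{thm:threshold-engine} with \cref{lem:even-block}) needs $\delta_a(b,c)\ge 0$, so there is nothing to reuse. Your substitute, an induction on $a$ propagating $X_G+\epsilon s_\mu\spos$, fails at a concrete point. In the master recurrence $X_{S(a,b,c)}=C_a(b,c)+\sum_{i=2}^{a-1}(i-1)e_iX_{S(a-i,b,c)}$, the term $e_2X_{S(a-2,b,c)}$ inherits $\delta_{a-2}(b,c)\,e_2T_{h-1}$. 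Here $\delta_{a-2}(b,c)=\delta_a(b,c)+1<0$ once $\delta_a(b,c)\le-2$. Besides $T_h$, the product $e_2T_{h-1}$ contains $s_{h(h-1)2}$, $s_{h(h-1)11}$, $s_{(h-1)^221}$ and $s_{(h-1)^21^3}$. At $s_{h(h-1)2}$ no other summand contributes anything from your inductive bounds. The same holds for the positive terms $s_{(h+1)h}$ and $s_{(h+1)(h-1)}$, even if you carried them, because a vertical strip cannot add two cells to row $1$ or to row $3$.

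Closing the induction would therefore require $\Coeff{s_{h(h-1)2}}{C_a(b,c)}\ge-\delta_a(b,c)-1$. This is a bound growing linearly in $a$ for a clique-spider with a $K_a$ foot, of which only positivity is available. Even that positivity is \cref{prop:universal-core}, proved in the paper, not a known fact. So the ``delicate absorption'' you postpone is the entire difficulty.

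The paper avoids it by fixing $a$ and inducting on $b+c$ (\cref{prop:general-even-bounds}). Two ingredients are absent from your plan:
\begin{itemize}
\item The induction simultaneously carries the two-even-leg spiders in the corrected form $g_{b,c}=X_{S(a,b,c)}+2\delta_a(b-1,c)\,e_1s_{(h-1)^21}\spos$ for even $b$. This is a reservoir of $2|\delta_a(b-1,c)|$ copies of $e_1T_{h-1}$.
\item The payment estimate \cref{lem:odd-payment}: $D_{p,c}\gs 4T_{h+1}+(20-4\delta_a(p,c))E_h$ with $E_h=e_2T_h-T_{h+1}$. For $p=1$ it comes from $C_3(a,c)\gs 4e_2^{h-1}s_{221}$, which grows in $a$ via \cref{lem:core-recurrence}; for $p\ge3$ it comes from the reservoir.
\end{itemize}
Together these give $g_{b,c}-e_2g_{b-2,c}\gs(20-3d)E_h$ with $d=\delta_a(b-2,c)$. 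The base case $g_{1,1}$ rests on \cref{thm:path-bridge}.

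\textbf{The remaining parts also need repair.}
\begin{itemize}
\item For three odd legs, your hypothesis $X_G+s_{hh}\spos$ carries no positive terms. So the $-s_{h(h-1)1}$ and $-\mathcal H_{2h}$ in $-e_2s_{(h-1)^2}$ must be paid from elsewhere. The paper propagates the stronger bound $X_G-s_{(h+1)(h-1)}+s_{hh}\spos$, so that $e_2s_{h(h-2)}$ supplies $s_{h(h-1)1}$; this is the exact identity $J_h=s_{(h+1)(h-2)1}+s_{h(h-2)1^2}-\mathcal H_{2h}$. It pays $\mathcal H_{2h}$ with $C_3(b,c)\gs 11\mathcal H_{2h}$ when $a=3$, and with the $j=2$ block when $a\ge5$, after reordering the legs so that the even-leg subspiders satisfy $\delta\ge0$. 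It starts from $S(a,1,1)$ via \cref{thm:path-bridge}.
\item Your recurrence ``$X_{S(a,b,c)}=X_{S(a-2,b,c)}+{}$Schur-positive terms'' is not homogeneous as written. Moreover, in the known identities the path products enter with a minus sign, e.g.\ $-X_{a+1}X_{b+1}$ in \cref{prop:3spider}.
\item The step-$(-1)$ induction for $\Coeff{s_{hh1}}{X_G}$ is unsupported, and its $a=0$ base is false: $\Coeff{s_{mm1}}{X_{P_{2m+1}}}=5m-1\ne5m+1$ (for instance $X_{P_3}=s_{21}+4s_{111}$). Both critical coefficients come directly from \cref{thm:s-coeff.XG}:
\begin{itemize}
\item in the odd case, $\Coeff{s_{hh}}{X_G}=\widetilde a_{hh}-\widetilde a_{(h+1)(h-1)}=0-1$ (the paper equivalently specializes to two variables);
\item in the even case, $\Coeff{s_{hh1}}{X_G}=\widetilde a_{hh1}-\widetilde a_{(h+1)(h-1)1}-\widetilde a_{(h+1)h}$, evaluated by counting balanced $2$-colorings of $G-v$ as in \cref{prop:critical}.
\end{itemize}
\end{itemize}
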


This paper is organized as follows.
\Cref{sec:preliminaries}
collects the symmetric-function and graph identities used in the proofs.
\Cref{sec:master-recurrence} establishes the master recurrence,
clique-spider positivity,
and the resulting growth and propagation inequalities.
\Cref{sec:coefficient-bounds} determines the critical Schur coefficients
of spiders and proves Schur bounds for path differences.
\Cref{sec:proof-two-odd} establishes the bounds for spiders with two odd legs,
including the all-odd case and the one-even-leg case at or below the threshold.
\Cref{sec:proof-two-even} proves the remaining bounds
and completes the proofs of \cref{thm:classification,thm:refined-bounds}.

\section{Preliminaries}\label{sec:preliminaries}

We use~$\mathbb Z$ and~$\mathbb Q$ for the integers and rationals,
with subscripts indicating restrictions, as in~$\mathbb Z_{\ge 1}$.
A \emph{composition} of a positive integer~$n$
is a finite sequence $I=i_1\dotsm i_\ell$
of positive integers whose sum is~$n$,
written~$I\vDash n$.
It has \emph{size}~$|I|=n$ and \emph{length}~$\ell(I)=\ell$.
The empty sequence~$\emptyset$ is the unique composition of~$0$.
A \emph{partition} of~$n$ is a composition with weakly decreasing parts,
written $\lambda=\lambda_1\dotsm\lambda_\ell\vdash n$.
We use exponents to denote repeated parts,
as in~$r^21^2=(r,r,1,1)$.
For partitions~$\lambda,\mu$ of the same size,
we say that $\lambda$ is \emph{dominated} by $\mu$,
written as~$\lambda\unlhd \mu$,
if
\[
\sum_{j=1}^k\lambda_j
\le \sum_{j=1}^k\mu_j
\]
for every $k\ge 1$,
where missing parts are zero.
The \emph{Young diagram} of~$\lambda$ consists of left-justified rows
of lengths~$\lambda_1,\lambda_2,\dots$,
read from bottom to top in French notation.
A \emph{skew Young diagram}~$\lambda/\mu$ is the difference
of two nested Young diagrams.
A \emph{ribbon} is a connected skew Young diagram with no~$2\times2$ block of cells.

For background on symmetric functions,
see \citet{Mac95B} and \citet[Ch.~7]{Sta23B}.
Let~$\mathrm{Sym}$ be the algebra of symmetric functions over~$\mathbb Q$.
We write~$e_\lambda$ and~$s_\lambda$
for the elementary and Schur symmetric functions indexed by~$\lambda$,
respectively,
and~$h_r$ for the complete homogeneous symmetric function of degree~$r$.
For a composition~$I=i_1\dotsm i_\ell$,
put~$e_0=1$ and
\[
e_I
=e_{i_1}\dotsm e_{i_\ell},
\qquad e_\emptyset
=1.
\]
A symmetric function~$F$ is \emph{$e$-positive} or \emph{Schur positive}
if its coefficients in the corresponding basis are nonnegative,
written~$F\epos$ or~$F\spos$,
respectively.
We write~$F\gs H$ when~$F-H$ is Schur positive.
We also write~$G\spos$ for a graph~$G$ when~$X_G\spos$,
and use the analogous convention for~$e$-positivity.

The \emph{Littlewood--Richardson rule}
implies that the product of two Schur-positive symmetric functions
is Schur positive,
see \citet[Ch.~7]{Sta23B}.
A particular case, called \emph{Pieri's rule},
is as follows.
A \emph{vertical $k$-strip} is a skew Young diagram
with~$k$ cells and at most one cell in each row.

\begin{proposition}[Pieri rule]\label[pieri]{prop:Pieri}
For any integer~$k\ge 0$ and partition~$\lambda$,
\[
e_k s_\lambda
=\sum_{\text{$\mu\supseteq\lambda$, and $\mu/\lambda$ is a vertical $k$-strip}}
s_\mu.
\]
\end{proposition}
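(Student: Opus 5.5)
The statement just above is the classical Pieri rule for $e_k s_\lambda$. I would derive it from the Jacobi--Trudi/bialternant description of Schur functions, working in finitely many variables $x_1,\dots,x_n$ with $n\ge\ell(\lambda)+k$. Since the coefficients are independent of~$n$ once $n$ is large enough, the identity then lifts to $\mathrm{Sym}$. Write $a_\alpha=\det\brk1{x_i^{\alpha_j}}$ and $\delta=(n-1,n-2,\dots,0)$, so that $s_\lambda=a_{\lambda+\delta}/a_\delta$. It then suffices to prove
\[
e_k\,a_{\lambda+\delta}=\sum_{\mu} a_{\mu+\delta},
\]
where $\mu$ ranges over partitions with $\mu/\lambda$ a vertical $k$-strip.

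First, I expand $e_k=\sum_{|S|=k}\prod_{i\in S}x_i$, where $S$ runs over $k$-subsets of $\{1,\dots,n\}$. Equivalently, $e_k=\sum_\epsilon x^\epsilon$ over $0/1$ vectors $\epsilon$ with $|\epsilon|=k$. Because $e_k$ is symmetric, multiplying the antisymmetrization gives
\[
e_k\,a_{\lambda+\delta}=\sum_{w\in\mathfrak S_n}\operatorname{sgn}(w)\,w\brk1{x^{\lambda+\delta}e_k}=\sum_{\epsilon}a_{\lambda+\delta+\epsilon}.
\]
Here I use that each $w$ merely permutes the monomials of $e_k$.

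Next, I analyze each exponent vector $\beta=\lambda+\delta+\epsilon$. If two entries of $\beta$ coincide, then $a_\beta=0$. Consecutive entries of $\lambda+\delta$ differ by $\lambda_i-\lambda_{i+1}+1\ge1$, and adding $0$ or $1$ can only create a tie, never an inversion. So either $\beta$ is strictly decreasing, or $a_\beta$ vanishes. The main point to check is that $\beta$ is strictly decreasing exactly when $\mu:=\lambda+\epsilon$ is a partition: a tie occurs precisely when $\lambda_i=\lambda_{i+1}$ with $\epsilon_{i+1}=1$ and $\epsilon_i=0$. For such $\mu$, the diagram $\mu/\lambda$ adds one cell in each row $i\in S$, so it is a vertical $k$-strip. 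Conversely, every vertical $k$-strip arises from a unique $\epsilon$.

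The surviving terms are therefore exactly $a_{\mu+\delta}$ for the stated $\mu$, with no signs, and dividing by $a_\delta$ gives the rule. I expect no real obstacle here. The only delicate step is the bookkeeping in the tie/no-inversion argument. Alternatively, one could simply cite \citet[Ch.~7]{Sta23B} or \citet{Mac95B}, since this is standard, or apply $\omega$ to the $h_k$ Pieri rule proved via semistandard tableaux and RSK-type insertion.
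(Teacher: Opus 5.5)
Your proof is correct, and it follows a different route from the paper. The paper does not prove this statement: it records Pieri's rule as background, as the special case of the Littlewood--Richardson rule in which one factor is the single column $s_{1^k}=e_k$, with a pointer to \citet[Ch.~7]{Sta23B}.

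Your argument is the classical bialternant proof, as in \citet[Ch.~I]{Mac95B}, and each step checks out:
the identity $e_k a_{\lambda+\delta}=\sum_\epsilon a_{\lambda+\delta+\epsilon}$ holds because $e_k$ is symmetric.
The computation $\beta_i-\beta_{i+1}=(\lambda_i-\lambda_{i+1})+1+(\epsilon_i-\epsilon_{i+1})\ge 0$ shows that $\beta$ is weakly decreasing. Hence any repeated entry forces a consecutive tie, so $a_\beta=0$.
Distinct $\epsilon$ give distinct $\mu=\lambda+\epsilon$, so the surviving terms come with sign $+1$ and multiplicity one.

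Citing suits the paper, since the rule is only a tool for the spider arguments. Your derivation is self-contained: it uses only $s_\lambda=a_{\lambda+\delta}/a_\delta$ and linear independence of Schur polynomials. If Schur functions are defined by tableaux, as in Stanley, that bialternant formula is itself a classical theorem you are importing. Your route also makes directly visible the multiplicity-freeness the paper later relies on, e.g.\ in \cref{lem:forced-cell-pieri}.

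One bookkeeping remark on the lift to $\mathrm{Sym}$. The bound $n\ge\ell(\lambda)+k$ ensures that every vertical $k$-strip appears on the right-hand side. To conclude that the coefficient of every other $s_\nu$ vanishes, you also need $n\ge\ell(\nu)$. Taking $n\ge|\lambda|+k$ settles this at once, since restriction to $n$ variables is then injective in degree $|\lambda|+k$.
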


We will also use the following consequence of \cref{prop:Pieri}.

\begin{lemma}\label{lem:forced-cell-pieri}
For any partition~$\lambda$ and integer~$i>\ell(\lambda)$,
we have $e_{i-1}s_{\lambda1}\gs e_i s_\lambda$.
\end{lemma}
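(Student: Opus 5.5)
The plan is to expand both sides with \cref{prop:Pieri} and show that every Schur function occurring in $e_i s_\lambda$ also occurs in $e_{i-1}s_{\lambda1}$. Both expansions are multiplicity-free sums of Schur functions, so the difference $e_{i-1}s_{\lambda1}-e_is_\lambda$ is then a sum of distinct Schur functions. It is therefore Schur positive.

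Write $\ell=\ell(\lambda)$. By \cref{prop:Pieri},
\[
e_i s_\lambda=\sum_{\mu} s_\mu,
\]
where $\mu$ ranges over the partitions with $\mu\supseteq\lambda$ and $\mu/\lambda$ a vertical $i$-strip. Fix such a $\mu$. A vertical strip has at most one cell in each row, so at most $\ell$ of its $i$ cells lie in rows $1,\dots,\ell$. Since $i>\ell$, at least one cell lies in a row of index greater than $\ell$. Every such row of $\lambda$ is empty, so a cell there lies in column $1$. Since $\mu$ is a partition, column $1$ of $\mu$ is filled without gaps. Hence the cell in row $\ell+1$, column $1$ belongs to $\mu/\lambda$.

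This forced cell gives $\lambda1\subseteq\mu$, where $\lambda1$ is $\lambda$ with one cell appended in row $\ell+1$. The skew shape $\mu/\lambda1$ is $\mu/\lambda$ with that one cell removed. It therefore has $i-1$ cells and still has at most one cell per row, so it is a vertical $(i-1)$-strip. By \cref{prop:Pieri} applied to $e_{i-1}s_{\lambda1}$, the function $s_\mu$ appears in $e_{i-1}s_{\lambda1}$ with coefficient $1$.

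Thus the index set of $e_is_\lambda$ is contained in that of $e_{i-1}s_{\lambda1}$, and all coefficients on both sides equal $1$. Hence $e_{i-1}s_{\lambda1}-e_is_\lambda$ is the sum of $s_\mu$ over the remaining partitions, which is Schur positive. The only step requiring care is the forced-cell argument, which is where the hypothesis $i>\ell(\lambda)$ is used.
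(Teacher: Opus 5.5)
Your proof is correct and takes the same approach as the paper. By Pieri's rule, every vertical $i$-strip over $\lambda$ must contain the first cell of row $\ell(\lambda)+1$; removing it leaves a vertical $(i-1)$-strip over $\lambda1$, and since both expansions are multiplicity-free, the difference is Schur positive.
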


\begin{proof}
In view of \cref{prop:Pieri},
if~$\mu/\lambda$ is a vertical~$i$-strip,
then it uses a new row since~$i>\ell(\lambda)$.
Thus~$\mu$ contains the first cell in row~$\ell(\lambda)+1$,
so~$\mu/(\lambda,1)$ is a vertical~$(i-1)$-strip.
Both Pieri expansions are multiplicity-free by \cref{prop:Pieri},
so every Schur constituent on the right occurs on the left.
\end{proof}

For the disjoint union~$G\sqcup H$ of two graphs,
we have $X_{G\sqcup H}=X_G X_H$.
A \emph{stable composition} of a graph~$G$ of type~$\mu$
is a sequence~$(B_1,\dots,B_\ell)$
of pairwise disjoint nonempty stable sets whose union is~$V(G)$
and such that~$\abs{B_i}=\mu_i$ for every~$i$.
Let $\widetilde a_\mu(G)$ be the number of stable compositions of $G$ of type $\mu$.
\citet[Prop.~1.5]{Sta98} established the \emph{stable partition criterion}
saying that any Schur-positive graph
that has a stable partition of type $\lambda$
must have a stable partition of any type $\mu$ that is dominated by~$\lambda$.
One of its direct applications
pointed out by \citet[Thm.~2.3 and Eg.~2.4]{WW20}
will help prove \cref{thm:refined-bounds}.
A bipartite graph is \emph{balanced}
if it admits a bipartition whose two classes have sizes
differing by at most one.

\begin{theorem}[\citeauthor{WW20}]\label{thm:s+.bipartite}
Any Schur-positive bipartite graph is balanced.
In particular,
any spider with three odd legs is not Schur positive.
\end{theorem}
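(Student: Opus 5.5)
We plan to derive \cref{thm:s+.bipartite} from the stable partition criterion of \citet[Prop.~1.5]{Sta98}.

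\textbf{Setup.} Let $G$ be a Schur-positive bipartite graph on $n$ vertices. If $G$ has no edges, then $G$ is trivially balanced, since we may split the vertices into classes of sizes $\lceil n/2\rceil$ and $\lfloor n/2\rfloor$. So assume $G$ has an edge.

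\textbf{Key steps.}
\begin{enumerate}
\item Take any bipartition $V=A\sqcup B$ with $|A|\ge|B|$. This gives a stable partition of type $\lambda=(|A|,|B|)$, or $(n)$ if $B$ is empty; the latter cannot occur because $G$ has an edge.
\item Let $\mu=(\lceil n/2\rceil,\lfloor n/2\rfloor)$. Every two-part partition of $n$ dominates $\mu$, so $\mu\unlhd\lambda$.
\item By the stable partition criterion, $G$ has a stable partition of type $\mu$, say $A'\sqcup B'$ with two stable sets.
\item A partition of $V$ into two stable sets is exactly a bipartition of $G$. Hence $A'\sqcup B'$ is a bipartition whose classes differ in size by at most one, so $G$ is balanced.
\end{enumerate}

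\textbf{Spider consequence.} Let $G=S(a,b,c)$ with $a,b,c$ odd. Since $G$ is a connected tree, its bipartition is unique up to swapping the classes, so we only need to compute the class sizes.
\begin{itemize}
\item The branching vertex lies in one class. Each leg of odd length $\ell$ contributes its $\ell$ non-center vertices alternately to the other class and the center's class, starting with the other class.
\item So a leg of length $\ell$ contributes $(\ell+1)/2$ vertices to the opposite class and $(\ell-1)/2$ to the center's class.
\item The center's class therefore has size $1+\sum(\ell-1)/2=1+(a+b+c-3)/2$.
\item The opposite class has size $(a+b+c+3)/2$.
\end{itemize}
The difference between the two class sizes is $(a+b+c+3)/2-(a+b+c-1)/2=2$. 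Thus $G$ is not balanced, and by the first part $G$ is not Schur positive.

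\textbf{Main obstacle.} The only delicate point is step 4: identifying stable partitions into two blocks with bipartitions. This requires that the criterion gives a stable partition with exactly two blocks of the prescribed sizes, which is precisely what ``type $\mu$'' with $\ell(\mu)=2$ guarantees. The rest of the argument is bookkeeping.
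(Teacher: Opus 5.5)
Your proof is correct and follows the route the paper indicates. The paper does not prove this result itself; it cites it from \citet{WW20} as a direct application of Stanley's stable partition criterion, which is exactly your argument: the dominance step $(\lceil n/2\rceil,\lfloor n/2\rfloor)\unlhd(|A|,|B|)$, followed by reading a two-block stable partition as a bipartition. Your class sizes $(a+b+c-1)/2$ and $(a+b+c+3)/2$ for the all-odd spider are also right; the paper separately gets the sharper fact $[s_{hh}]X_G=-1$ from $X_G^{[2]}=s^{[2]}_{(h+1)(h-1)}-s^{[2]}_{hh}$.
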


A \emph{ribbon tabloid} of shape~$\lambda$ is a tiling of its Young diagram
by ribbons that can be removed in some order
so that the cells remaining after each removal
form an ordinary Young diagram;
the removal order is not part of the tabloid.
It is said to be \emph{special}
if every ribbon meets the first column of the original diagram.
The \emph{height} of a ribbon tabloid~$T$ is the number
\[
h(T)
=\sum_{R\in T}(r(R)-1),
\]
where~$r(R)$ is the number of rows occupied by the ribbon~$R$.
\citet[Thm.~3.1]{WW20} gave a combinatorial formula
for each Schur coefficient of~$X_G$.

\begin{theorem}[\citeauthor{WW20}]\label{thm:s-coeff.XG}
Let~$G$ be a graph on~$n$ vertices.
For any partition~$\lambda\vdash n$,
\[
\Coeff{s_\lambda}{X_G}
=\sum_{\text{$T$: special ribbon tabloids of shape~$\lambda$}}(-1)^{h(T)}
\widetilde a_{\kappa_T}(G),
\]
where~$\kappa_T$ is the partition consisting of the ribbon lengths.
\end{theorem}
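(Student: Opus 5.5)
The plan is to compute the Schur coefficient of $X_G$ with the Hall inner product, pairing the monomial expansion of $X_G$ against the $h$-expansion of $s_\lambda$. The second expansion is exactly the Eğecioğlu--Remmel combinatorial description of the inverse Kostka matrix by special rim hook (ribbon) tabloids.

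\emph{Step 1: monomial expansion with the counts $\widetilde a_\mu(G)$.} Consider a proper coloring $\kappa$ whose color classes, listed by increasing color, are $B_1,\dots,B_\ell$. Collect the colorings whose color multiplicities, read in increasing order of colors, give a fixed rearrangement of~$\mu$. Each such family is in bijection with ordered stable set partitions with prescribed block sizes. Summing, I expect
\[
X_G=\sum_{\mu\vdash n}\widetilde a_\mu(G)\,m_\mu .
\]
To check this, take the coefficient of the monomial $x_1^{\mu_1}x_2^{\mu_2}\dotsm x_\ell^{\mu_\ell}$. It counts proper colorings in which color~$i$ is used exactly $\mu_i$ times. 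These are precisely the sequences $(B_1,\dots,B_\ell)$ of disjoint stable sets covering $V(G)$ with $\abs{B_i}=\mu_i$, that is, stable compositions of type~$\mu$.

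\emph{Step 2: extract the Schur coefficient.} Since $\{m_\mu\}$ and $\{h_\mu\}$ are dual bases, $\langle m_\mu,h_\nu\rangle=\delta_{\mu\nu}$, and the Schur functions are orthonormal. Write $s_\lambda=\sum_\mu K^{-1}_{\lambda\mu}\,h_\mu$, where this is the $h$-expansion (equivalently, the inverse Kostka numbers, with indices in the convention making $h_\mu=\sum_\lambda K_{\lambda\mu}s_\lambda$ invert correctly). Then
\[
\Coeff{s_\lambda}{X_G}=\langle X_G,s_\lambda\rangle=\sum_{\mu}\widetilde a_\mu(G)\,[h_\mu]\,s_\lambda .
\]

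\emph{Step 3: invoke Eğecioğlu--Remmel.} The coefficient of $h_\mu$ in $s_\lambda$ equals $\sum_T(-1)^{h(T)}$. Here the sum runs over special rim hook tabloids $T$ of shape~$\lambda$ whose multiset of ribbon lengths is~$\mu$, and the sign of a ribbon of $r$ rows is $(-1)^{r-1}$. Substituting this into Step~2 and regrouping by tabloid rather than by~$\mu$ gives the stated formula, with $\kappa_T=\mu$.

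If a self-contained argument is preferred, I would derive Step~3 from the Jacobi--Trudi identity $s_\lambda=\det\bigl(h_{\lambda_i-i+j}\bigr)$. Expanding the determinant, each permutation term $\operatorname{sgn}(\sigma)\prod_i h_{\lambda_i-i+\sigma(i)}$ is matched to a special ribbon tabloid. The $i$-th row's ribbon starts in the first column at row $\sigma(i)$ and ends at the right end of row~$i$, so its length is $\lambda_i-i+\sigma(i)$. The sign $\operatorname{sgn}(\sigma)$ then becomes $\prod(-1)^{r(R)-1}$.

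The main obstacle is this bijection. Terms with a negative index vanish, and one must check that the remaining terms correspond exactly to valid special tabloids. In particular, the ribbons must be removable in some order leaving Young diagrams. One must also verify that the permutation sign agrees with the height parity; this is the standard row-by-row lattice path/ribbon argument, and I would simply cite Eğecioğlu--Remmel for it.
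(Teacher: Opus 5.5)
Your proposal is correct and is the standard derivation of this result. This paper gives no proof of its own; it only quotes \citet[Thm.~3.1]{WW20}, where the formula comes from the same two ingredients: the expansion $X_G=\sum_{\mu}\widetilde a_\mu(G)\,m_\mu$, and the E\u{g}ecio\u{g}lu--Remmel special rim hook formula for the inverse Kostka matrix. Your Hall inner product step is just the duality $[s_\lambda]\,m_\mu=[h_\mu]\,s_\lambda$.

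One caution about your optional Jacobi--Trudi sketch. The ribbon of length $\lambda_i-i+\sigma(i)$ ends at the right end of row~$i$ only in the diagram left after the earlier ribbons are removed. For example, with $\lambda=(3,3)$ and $\sigma=(12)$, the length-$2$ ribbon lies in row~$1$. If you write that bijection out, use the iterated expansion along the last column, which at each step removes the border strip through the top cell of the first column.
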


Another general formula for chromatic symmetric functions
is the arithmetic-progression identity
due to \citet[Proposition~6]{AWv24}.

\begin{proposition}[\citeauthor{AWv24}]
\label{prop:arithmetic-progression}
Let~$q\ge 1$,
let~$v_1,\dots,v_q$ be distinct vertices of a graph~$G$
with a common closed neighborhood,
and let~$w$ be another vertex,
adjacent to none of them.
Set~$G_0=G$,
and, for $1\le j\le q$,
let~$G_j$ be obtained from~$G_0$ by adding the edges~$wv_1,\dots,wv_j$.
Then
\[
X_{G_j}
=\frac{q-j}{q}X_{G_0}+\frac{j}{q}X_{G_q}
\qquad(0\le j\le q).
\]
In particular,
the chromatic symmetric functions~$X_{G_0},\dots,X_{G_q}$
form an arithmetic progression.
\end{proposition}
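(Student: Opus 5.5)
The plan is to show directly from the definition of $X_G$ that consecutive differences $X_{G_{j-1}}-X_{G_j}$ do not depend on $j$. The displayed formula then follows at once: if $D=X_{G_0}-X_{G_1}$ is the common difference, then $X_{G_j}=X_{G_0}-jD$ and $X_{G_q}=X_{G_0}-qD$. Eliminating $D$ gives $X_{G_j}=\frac{q-j}{q}X_{G_0}+\frac{j}{q}X_{G_q}$.

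\emph{Step 1: the difference as a sum over colorings.} Fix $1\le j\le q$. The graph $G_j$ is $G_{j-1}$ with the single edge $wv_j$ added. Hence a proper coloring of $G_j$ is exactly a proper coloring of $G_{j-1}$ with $\kappa(w)\ne\kappa(v_j)$. Therefore $X_{G_{j-1}}-X_{G_j}$ is the sum of $\prod_v x_{\kappa(v)}$ over proper colorings $\kappa$ of $G_{j-1}$ with $\kappa(w)=\kappa(v_j)$.

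\emph{Step 2: these colorings are colorings of $G_0$.} Since $v_1,\dots,v_q$ share a common closed neighborhood, each $v_i$ lies in the closed neighborhood of $v_j$, so the $v_i$ are pairwise adjacent. Take a proper coloring $\kappa$ of $G_0$ with $\kappa(w)=\kappa(v_j)$. For each $i<j$ we get $\kappa(w)=\kappa(v_j)\ne\kappa(v_i)$, so the added edges $wv_1,\dots,wv_{j-1}$ are automatically properly colored. Thus the colorings in Step~1 are exactly the proper colorings of $G_0$ with $\kappa(w)=\kappa(v_j)$. Write $D_j$ for the corresponding generating function, so that $X_{G_{j-1}}-X_{G_j}=D_j$.

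\emph{Step 3: the difference is independent of $j$.} The transposition swapping $v_1$ and $v_j$ and fixing all other vertices is an automorphism of $G_0$. This uses two facts: $v_1$ and $v_j$ have the same closed neighborhood, so they are adjacent twins, and $w$ is adjacent to neither of them, so $w$ is fixed. Composing colorings with this automorphism gives a weight-preserving bijection between proper colorings of $G_0$ with $\kappa(w)=\kappa(v_j)$ and those with $\kappa(w)=\kappa(v_1)$. Hence $D_j=D_1$ for all $j$, and the arithmetic progression and the displayed formula follow as explained above.

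The only delicate point is Step~2: we must check that no constraint from the earlier edges $wv_i$, $i<j$, is lost. This is exactly where the pairwise adjacency of the $v_i$, coming from the common closed neighborhood, is needed.
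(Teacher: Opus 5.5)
Your proof is correct and complete. The paper does not prove this proposition; it quotes it from \citet{AWv24}, so there is no in-paper argument to compare against.

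Your argument checks out step by step:
\begin{enumerate}
\item Step~1 is the coloring-level form of deletion--contraction.
\item Step~2 correctly uses that a common closed neighborhood forces $v_1,\dots,v_q$ to be pairwise adjacent. Hence the constraints from $wv_1,\dots,wv_{j-1}$ hold automatically once $\kappa(w)=\kappa(v_j)$.
\item Step~3 correctly uses the twin transposition $(v_1\,v_j)$, an automorphism of $G_0$. It shows that the common difference is the generating function of proper colorings of $G_0$ with $\kappa(w)=\kappa(v_1)$.
\end{enumerate}
The interpolation formula then follows from $X_{G_j}=X_{G_0}-jD$.

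One justification is misattributed. The transposition fixes $w$ simply because $w\notin\{v_1,v_j\}$, not because $w$ is nonadjacent to them. The nonadjacency hypothesis is used implicitly in Step~1, where it guarantees that $wv_j$ is a genuinely new edge of $G_{j-1}$. Without it, $w$ would be adjacent to every $v_i$, since the $v_i$ share a closed neighborhood. Then all $G_j$ would coincide with $G_0$ and the statement would be trivial.
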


Let~$P_n$ be the path on~$n$ vertices.
Put~$X_n=X_{P_n}$ for~$n\ge 1$,
and set~$X_0=1$.
\citet[Prop.~5.3]{Sta95} obtained the following path generating function.

\begin{proposition}[\citeauthor{Sta95}]\label{prop:gf.path}
We have $\sum_{n\ge 0}X_n z^n=E(z)/D(z)$,
where
\[
E(z)
=\sum_{j\ge 0}e_j z^j
\quad\text{and}\quad
D(z)
=1-\sum_{j\ge 2}(j-1) e_j z^j.
\]
\end{proposition}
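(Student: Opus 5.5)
The statement to prove is \cref{prop:gf.path}, Stanley's generating function for paths. My plan is to expand $X_n$ in the elementary basis by an explicit combinatorial formula and then read off the generating function. Concretely, I will show that
\[
X_n=\sum_{I=i_1\dotsm i_\ell\vDash n} w(I)\, e_I,
\qquad
w(I)=(i_1-1)(i_2-1)\dotsm(i_{\ell-1}-1),
\]
with no weight on the last part $i_\ell$. Granting this, the series $\sum_{n\ge0}X_nz^n$ factors as
\[
\Bigl(\sum_{m\ge0}\Bigl(\sum_{j\ge1}(j-1)e_jz^j\Bigr)^m\Bigr)\cdot\Bigl(\sum_{j\ge0}e_jz^j\Bigr),
\]
where the empty composition accounts for $X_0=1$. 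Since the $j=1$ term vanishes, this is exactly $E(z)/D(z)$.

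To establish the expansion, I will argue by induction on $n$ via a recurrence in which the last vertex of the path is the special one. The cleanest route avoids deletion–contraction, which is not available for $X_G$ directly. Instead I will use the power-sum expansion $X_G=\sum_{S\subseteq E}(-1)^{|S|}p_{\lambda(S)}$.
\begin{itemize}
\item For $P_n$, an edge subset is determined by the composition of $n$ into the component sizes along the path. This gives $X_n=\sum_{I\vDash n}(-1)^{n-\ell(I)}p_I$.
\item Hence, with $P(z)=\sum_{j\ge1}(-1)^{j-1}p_jz^j$, we get $\sum_n X_nz^n=1/(1-P(z))$, the sign $(-1)^{n-\ell}$ factoring across parts.
\end{itemize}

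It then remains to verify the identity of formal power series $1/(1-P(z))=E(z)/D(z)$, which is equivalent to $D(z)=E(z)\,(1-P(z))$. For this I will use Newton's identity in generating-function form, $E'(z)/E(z)=\sum_{j\ge1}(-1)^{j-1}p_jz^{j-1}$, that is, $zE'(z)=E(z)P(z)$. Then
\[
E(z)\,(1-P(z))=E(z)-zE'(z)=\sum_{j\ge0}(1-j)e_jz^j .
\]
The $j=0$ and $j=1$ terms contribute $1$ and $0$, and the remaining terms give $-\sum_{j\ge2}(j-1)e_jz^j$. This is exactly $D(z)$, which proves the proposition. It also bypasses the combinatorial $e$-expansion entirely.

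The only potential obstacle is justifying the power-sum expansion of $X_G$. That is Stanley's standard inclusion–exclusion over edge sets, which I will cite as known. The remaining steps are formal manipulations.
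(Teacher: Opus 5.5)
The paper gives no proof of this proposition. It quotes the result from Stanley and uses it only through the recurrence $X_n=e_n+\sum_{i=1}^n(i-1)e_iX_{n-i}$, obtained by clearing the denominator $D(z)$. Your power-sum argument is a correct derivation, given the cited expansion $X_G=\sum_{S\subseteq E}(-1)^{|S|}p_{\lambda(S)}$:
\begin{enumerate}
\item Edge subsets of $P_n$ correspond bijectively to compositions $I\vDash n$, with $|S|=n-\ell(I)$, so $\sum_n X_nz^n=1/(1-P(z))$.
\item Newton's identity $zE'(z)=E(z)P(z)$ gives $E(z)\bigl(1-P(z)\bigr)=\sum_{j\ge0}(1-j)e_jz^j=D(z)$.
\item Both $D(z)$ and $1-P(z)$ have constant term $1$, so they are invertible in $\mathrm{Sym}[[z]]$, and the two series agree.
\end{enumerate}
Compared with the paper's bare citation, your route replaces one imported result with a more basic one, the power-sum expansion, plus a short formal computation.

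Your opening paragraph, however, is wrong and should be deleted rather than merely bypassed. The expansion with weight $w(I)=(i_1-1)\dotsm(i_{\ell-1}-1)$ already fails at $n=2$: it gives $X_2=e_2$, but $X_2=2e_2$. It also does not factor as you claim. When you expand $E(z)\sum_{m\ge0}\bigl(\sum_{j\ge1}(j-1)e_jz^j\bigr)^m$, a composition $I$ arises twice: once with the trailing factor $e_{i_\ell}$ taken from $E(z)$, and once with trailing factor $e_0=1$. The correct weight is therefore $\bigl(1+(i_\ell-1)\bigr)\prod_{k<\ell}(i_k-1)=i_\ell\prod_{k<\ell}(i_k-1)$. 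An induction aimed at your stated formula could not succeed. Since the power-sum argument never uses it, drop that paragraph, along with the announced recurrence on the last vertex, which you never carry out.
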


We use the following case of \citet[Lem.~4.4]{Zhe22}.

\begin{proposition}[\citeauthor{Zhe22}]\label{prop:3spider}
We have $X_{S(a,b,1)}
=X_{a+b+2}+e_1 X_{a+b+1}-X_{a+1} X_{b+1}$
for any~$a,b\in \mathbb Z_{\ge 1}$.
\end{proposition}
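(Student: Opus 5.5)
Proposition~\ref{prop:3spider} is an identity, so I would derive it by deletion–contraction, or more precisely by the triple-deletion / arithmetic-progression principle. Let $v$ be the branch vertex of $S(a,b,1)$ and $w$ the leaf of the leg of length~$1$. View $S(a,b,1)$ as a path $P_{a+b+1}$ through $v$, with the pendant vertex $w$ attached to the interior vertex $v$.

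The plan is to apply \cref{prop:arithmetic-progression} with $q=2$. Take $G_0$ to be the graph on $a+b+2$ vertices formed by the path $P_{a+b+1}$ together with a pendant edge. Take the two vertices of a common closed neighborhood to be $w$ and $v$ after adding the edge $wu$, where $u$ is a neighbor of $v$ on the path. Concretely, I would follow Zheng's argument in the form of the triangle identity (Guay-Paquet/Orellana--Scott). For a triangle $v,u,w$ with edges $e_1=vw$, $e_2=uw$, $e_3=uv$, we have
\[
X_{G}-X_{G-e_1}-X_{G-e_2}+X_{G-e_1-e_2}=0,
\]
in the appropriate combination. Applied to the graph where $w$ is adjacent to both $v$ and $u$, this relates the spider $S(a,b,1)$, the spider with the pendant moved to $u$, and a graph with the triangle. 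Iterating along the path as the pendant slides from $v$ to an endpoint gives a telescoping sum. At the endpoint the graph becomes $P_{a+b+2}$.

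A cleaner route, which I would try first, uses vertex-deletion-type recurrences coming from \cref{prop:gf.path}. Attaching a pendant at position $k$ of a path, the difference of consecutive positions is $X_{k}X_{n-k}$-type products. Summing these differences telescopes to $X_{a+b+2}+e_1X_{a+b+1}-X_{a+1}X_{b+1}$. The term $e_1X_{a+b+1}$ corresponds to the disconnected graph (isolated $w$ plus path), which arises in the first step of the deletion. The boundary product $X_{a+1}X_{b+1}$ arises from cutting at $v$.

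The main obstacle is getting the correct signs and boundary terms in the telescoping. I would verify the identity on small cases: $a=b=1$ (the claw) gives
\[
X_4+e_1X_3-X_2^2,
\]
which can be checked directly in the $e$-basis using \cref{prop:gf.path}. Since this proposition is stated as a special case of \citet[Lem.~4.4]{Zhe22}, the simplest acceptable proof is a direct citation with this specialization.
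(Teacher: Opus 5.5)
The paper gives no argument of its own here: it simply quotes the special case of \citet[Lem.~4.4]{Zhe22}, so your closing fallback of a direct citation is exactly what the paper does. The self-contained derivations you sketch, however, have a genuine gap.

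Label the path $p_1\cdots p_{a+b+1}$ with $v=p_{a+1}$ and $u=p_{a+2}$, and let $G$ be $S(a,b,1)$ plus the edge $uw$.
\begin{itemize}
\item \Cref{prop:arithmetic-progression} does not apply. After adding $wu$ we have $N[w]=\{u,v,w\}$, while $N[v]$ also contains $p_a$, which exists since $a\ge 1$. That proposition is only the twin case of triple deletion.
\item \Cref{prop:gf.path} concerns paths only, so it cannot produce a relation involving the pendant vertex.
\item Most importantly, the one triple-deletion relation you write, with $e_1=vw$ and $e_2=uw$, reads
\[
X_G=X_{S(a,b,1)}+X_{G-vw}-e_1X_{a+b+1}.
\]
It involves the unknown $X_G$ and the \emph{sum} of two spiders. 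Sliding the pendant only creates a new unknown triangle graph at every step, so nothing telescopes.
\end{itemize}

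The missing idea is to apply triple deletion a second time to the \emph{same} graph $G$, now deleting the pair $\{vw,uv\}$. Here $G-uv$ is the path $p_1\cdots p_{a+1}\,w\,p_{a+2}\cdots p_{a+b+1}\cong P_{a+b+2}$, and $G-\{vw,uv\}\cong P_{a+1}\sqcup P_{b+1}$. Hence
\[
X_G=X_{a+b+2}+X_{G-vw}-X_{a+1}X_{b+1}.
\]
Subtracting the two relations cancels both $X_G$ and $X_{G-vw}$. (The latter is $X_{S(a+1,b-1,1)}$, or $X_{a+b+2}$ when $b=1$.) This gives the identity at once, with no telescoping and no small-case checks.

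If you prefer your sliding picture, you must also use the pair $\{uw,uv\}$, which gives
\[
X_{S(a,b,1)}-X_{G-vw}=X_{a+2}X_b-X_{a+1}X_{b+1}.
\]
This does telescope to the same formula, but it costs two triple deletions per step rather than two in total. Note also that the triple-deletion property, due to Orellana and Scott, is not among the results recalled in the paper, so you must cite it.
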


\citet[Cor.~3.2 and Thm.~4.2]{TW23X}
also established the following positivity results.

\begin{theorem}[\citeauthor{TW23X}]\label{thm:TW+}
We have $X_{a+2}-e_2 X_a\epos$ and~$S(a,2,1)\spos$ for any~$a\in \mathbb Z_{\ge 1}$.
\end{theorem}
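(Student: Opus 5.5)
Both claims come from \cref{prop:gf.path}. Clearing denominators in $\sum_n X_nz^n=E(z)/D(z)$ gives, for every $n\ge 1$, the recurrence
\[
X_n=e_n+\sum_{j=2}^{n}(j-1)\,e_j\,X_{n-j}.
\]
By induction on $n$, this recurrence shows that every $X_n$ is $e$-positive.

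For the first claim, I apply the recurrence with $n=a+2$ and subtract $e_2X_a$. The subtracted term is exactly the $j=2$ summand, so
\[
X_{a+2}-e_2X_a=e_{a+2}+\sum_{j=3}^{a+2}(j-1)\,e_jX_{a+2-j}.
\]
This is a nonnegative combination of products of $e$-positive functions, hence $e$-positive. The same computation will be reused below.

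For $S(a,2,1)$, I start from \cref{prop:3spider} with $b=2$, using $X_3=3e_3+e_{21}$:
\[
X_{S(a,2,1)}=X_{a+4}+e_1X_{a+3}-(3e_3+e_{21})X_{a+1}.
\]
First I pair $e_1X_{a+3}$ with $-e_{21}X_{a+1}$. Their sum $e_1(X_{a+3}-e_2X_{a+1})$ is $e$-positive by the first claim.

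Next I expand $X_{a+4}$ by the recurrence, keeping the $j=2$ and $j=3$ terms:
\[
X_{a+4}=e_2X_{a+2}+2e_3X_{a+1}+R,
\]
where $R=e_{a+4}+\sum_{j\ge4}(j-1)e_jX_{a+4-j}$ is $e$-positive. The remaining expression is then $e_2X_{a+2}-e_3X_{a+1}+R$. Expanding $X_{a+2}$ once more gives
\[
e_2X_{a+2}-e_3X_{a+1}=e_{22}X_a+(2e_{23}-e_3e_1)\cdots,
\]
so the negativity is concentrated in a single term of the form $-e_3X_{a+1}$. I therefore need to show
\[
e_2X_{a+2}+R\gs e_3X_{a+1}.
\]

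This comparison is the main obstacle, since here $e$-positivity genuinely fails and only Schur positivity survives. I will first write $X_{a+2}=e_1X_{a+1}+(X_{a+2}-e_1X_{a+1})$. For each Schur constituent $s_\lambda$ of $e_1X_{a+1}$ with $\ell(\lambda)\le2$, the Pieri rule and \cref{lem:forced-cell-pieri} (with $i=3$) give $e_2s_{\lambda1}\gs e_3s_\lambda$. For constituents $s_\lambda$ of $X_{a+1}$ with $\ell(\lambda)\ge3$, the inequality fails, and these must be absorbed by $R$ and by $e_2(X_{a+2}-e_1X_{a+1})$. I would do this by tracking Schur expansions through the recurrence and inducting on $a$, checking the small cases $a\le 3$ directly.

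If the bookkeeping becomes unmanageable, the fallback is \cref{thm:s-coeff.XG}. Stable compositions of $S(a,2,1)$ can be counted explicitly, so each Schur coefficient is an alternating sum over special ribbon tabloids, and I would bound it below by a sign-reversing involution on those tabloids.
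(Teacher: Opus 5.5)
The first claim is fine, but there is a genuine gap in the second: the inequality your whole argument for $S(a,2,1)\spos$ rests on is never proved.

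Your proof of $X_{a+2}-e_2X_a\epos$ is the paper's argument: it is the $k=2$ case of $X_n-(k-1)e_kX_{n-k}\epos$, read off from \cref{prop:gf.path}. Your reduction of the second claim via \cref{prop:3spider} to the inequality $e_2X_{a+2}+R\gs e_3X_{a+1}$ is also correct. However, that inequality is the entire content of $S(a,2,1)\spos$, and you leave it open. Proposing to ``track Schur expansions through the recurrence and induct on $a$'', or to find a sign-reversing involution on special ribbon tabloids, names a method but does not carry it out.

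Your diagnosis of where the difficulty lies is also wrong. Since $e_{21}-e_3=s_{21}$, you have $e_2e_1X_{a+1}-e_3X_{a+1}=s_{21}X_{a+1}\spos$ outright. So constituents with $\ell(\lambda)\ge3$ cause no trouble, and \cref{lem:forced-cell-pieri} is not needed. The real obstruction is that $X_{a+2}-e_1X_{a+1}$ is not Schur positive. Its two-variable specialization is $-\bigl(s_{(k+1)(k-1)}-s_{kk}\bigr)^{[2]}$ when $a+2=2k$, and $-s_{(k+1)k}^{[2]}$ when $a+2=2k+1$. What you must show is that $s_{21}X_{a+1}+R$ absorbs the negative part of $e_2(X_{a+2}-e_1X_{a+1})$, and the proposal offers no mechanism for this. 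Pairing the terms of $e_2X_{a+2}$ and $e_3X_{a+1}$ that share the factor $X_{a+2-j}$ does not settle it either. The paired coefficient $(j-1)e_2e_j-(j-2)e_3e_{j-1}=e_2e_j-(j-2)s_{2^31^{j-4}}$ has a negative Schur coefficient for every $j\ge4$.

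The paper avoids Schur-coefficient bookkeeping here entirely. It observes that $S(a,2,1)\cong S^{1a1}_{211}$, which is the case $(p,q,r,s)=(a,1,2,1)$ of \cref{prop:s+.S1pqrs1}. That proposition is proved by induction on $p+q$ using \cref{lem:relative-bootstrap}. Each step writes the chromatic symmetric function of a clique-spider as a nonnegative combination of two things: that of a previously handled clique-spider, and $X_{K_m}$ times that of another one. Schur positivity therefore propagates automatically from the base cases supplied by \cref{thm:s+:clique-spiders}. The positivity input your argument lacks is exactly this imported clique-spider positivity; the original source instead used a noncommutative-symmetric-function argument. Without some such ingredient, your proposal does not establish $S(a,2,1)\spos$.
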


While the latter assertion is recoverable from \cref{prop:s+.S1pqrs1},
the former follows from \cref{prop:gf.path}:
Multiplying the generating function by~$D(z)$ and comparing coefficients of~$z^n$,
we obtain
\[
X_n
=e_n+\sum_{i=1}^n(i-1) e_i X_{n-i}.
\]
For integers $1\le k\le n$,
it follows that
\[
X_n-(k-1) e_k X_{n-k}
=e_n+\sum_{1\le i\le n,\ i\ne k}(i-1) e_i X_{n-i}
\epos.
\]
Setting $k=2$ gives the former assertion in \cref{thm:TW+}.

A \emph{rooted graph} is a graph with a distinguished vertex called its root.
Let~$K_m$ be the complete graph on~$m$ vertices.
For a rooted graph~$G$ and an integer~$j\ge 0$,
let~$G^j$ be the graph obtained by attaching a path of length~$j$ at the root,
with all other vertices of the path new.
In particular, $G^0=G$.
Let~$P^0(G,K_m)$ be obtained
by identifying the root of~$G$ with one vertex of~$K_m$.
We use the following case of \citet[Prop.~3.1]{QTW26}.

\begin{proposition}[\citeauthor{QTW26}]\label{prop:KPG}
For any rooted graph~$G$ and integer~$m\ge 1$,
\[
X_{P^0(G,K_m)}
=(m-1)!\sum_{i=0}^{m-1}(1-i) e_i X_{G^{m-1-i}}.
\]
\end{proposition}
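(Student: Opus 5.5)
The plan is to induct on~$m$, using \cref{prop:arithmetic-progression} to trade one clique vertex for one extra path vertex at the root. Write $r$ for the root of~$G$, and let $v_1,\dots,v_{m-1}$ be the vertices of~$K_m$ other than the one identified with~$r$. We regard $G^1$ as rooted at the new endpoint of the attached path. With this convention $(G^1)^j=G^{j+1}$ for all $j\ge 0$, which is what lets the induction hypothesis be applied to~$G^1$.

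\emph{Base cases.} For $m=1$ we have $P^0(G,K_1)=G=G^0$. The right-hand side reduces to $0!\,(1-0)e_0X_{G^0}=X_G$, so the identity holds. For $m=2$, $P^0(G,K_2)$ is exactly $G^1$. The right-hand side is $1!\,(e_0X_{G^1}+0\cdot e_1X_{G^0})=X_{G^1}$, so the identity holds again.

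\emph{Inductive step, $m\ge 3$.} Let $G_0=G\sqcup K_{m-1}$, with the clique on $v_1,\dots,v_{m-1}$, and take $w=r$. The vertices $v_1,\dots,v_{m-1}$ share the closed neighborhood $\{v_1,\dots,v_{m-1}\}$, and $w$ is adjacent to none of them, so \cref{prop:arithmetic-progression} applies with $q=m-1\ge 2$. The relevant graphs in the progression are:
\begin{itemize}
\item $G_q=P^0(G,K_m)$;
\item $G_1$, obtained by joining $r$ to $v_1$, which is $P^0(G^1,K_{m-1})$ with the root of $G^1$ identified with~$v_1$;
\item $G_0$ itself, with $X_{G_0}=X_G\,X_{K_{m-1}}=(m-1)!\,e_{m-1}X_G$.
\end{itemize}
Taking $j=1$ in \cref{prop:arithmetic-progression} gives $X_{G_1}=\frac{q-1}{q}X_{G_0}+\frac1qX_{G_q}$, which rearranges to
\[
X_{P^0(G,K_m)}=(m-1)\,X_{P^0(G^1,K_{m-1})}-(m-2)\,(m-1)!\,e_{m-1}X_{G}.
\]

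Now apply the induction hypothesis to the rooted graph $G^1$ with clique size $m-1$. Using $(G^1)^{m-2-i}=G^{m-1-i}$, it gives $X_{P^0(G^1,K_{m-1})}=(m-2)!\sum_{i=0}^{m-2}(1-i)e_iX_{G^{m-1-i}}$. Multiplying by $m-1$ yields the terms $i=0,\dots,m-2$ of the claimed sum, each with the prefactor $(m-1)!$. The remaining term is $-(m-2)(m-1)!\,e_{m-1}X_{G^0}$, and it equals $(m-1)!\,(1-(m-1))\,e_{m-1}X_{G^{0}}$, which is precisely the $i=m-1$ term. This completes the induction.

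The only delicate point is checking the hypotheses and identifications in \cref{prop:arithmetic-progression}. One must confirm that $G_1$ really is $P^0(G^1,K_{m-1})$ with the correct root convention. One must also confirm that $w=r$ may carry arbitrary further structure from~$G$: the proposition allows an arbitrary ambient graph, so this is harmless. The rest is bookkeeping of the index shift.
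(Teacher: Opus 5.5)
Your proof is correct. The identifications $G_q=P^0(G,K_m)$ and $G_1\cong P^0(G^1,K_{m-1})$ hold, with $G^1$ rooted at its new endpoint so that $(G^1)^j=G^{j+1}$, and the rearranged $j=1$ identity together with the induction hypothesis reproduces every term of the sum exactly. The paper gives no proof of this proposition; it imports it from the literature. Your one-step identity is, however, exactly the argument the paper uses for \cref{lem:rec:clique-spider} (\cref{prop:arithmetic-progression} on a disjoint union with a clique, taking $j=1$), here applied to an arbitrary rooted graph and iterated in $m$. So your argument is a self-contained derivation from tools already in the paper.
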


For integers~$g,h,k\ge 0$ and~$r,s,t\ge 1$,
let~$S^{ghk}_{rst}$ be the \emph{clique-spider}
obtained by joining a common center
to chosen roots of three disjoint cliques~$K_r,K_s,K_t$
by otherwise disjoint paths of lengths~$g,h,k$,
respectively.
A path of length~$0$ identifies its clique root with the center.
Each connecting path is a \emph{leg},
the corresponding clique is its \emph{foot},
and their union is a \emph{limb}.
We use the following clique-spider results
of \citet[Thm.~7.3, Cor.~7.4, and Thm.~8.3]{WW26X}.

\begin{theorem}[\citeauthor{WW26X}]\label{thm:s+:clique-spiders}
Let $r\ge s\ge t\ge 1$.
Then~$S^{100}_{rst}\spos$ and~$S^{010}_{rst}\spos$.
Moreover,
$S^{001}_{(t+1)(t+1)t}\spos$ for every integer~$t\ge 2$.
\end{theorem}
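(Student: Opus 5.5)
Since this result is quoted from \citet{WW26X}, I would try to recover it with the tools collected above. The plan is to reduce each clique-spider to products of elementary functions and path functions~$X_n$, and then show Schur positivity by matching negative terms against positive ones using \cref{lem:forced-cell-pieri} and the Littlewood--Richardson rule.

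\textbf{Step 1 (expanding the feet).} Consider one foot~$K_m$, with~$G$ the rest of the clique-spider rooted at the clique root. \Cref{prop:KPG} gives
\[
X=(m-1)!\sum_{i}(1-i)e_iX_{G^{m-1-i}},
\]
and each~$G^{j}$ is again a clique-spider in which that limb has become a bare leg. The pieces behave as follows.
\begin{itemize}
\item The $i=0$ and $i=1$ terms have coefficient $1$ and $0$.
\item The terms with $i\ge2$ carry the negative coefficients $-(i-1)e_i$.
\end{itemize}
Applying this to each foot in turn writes $X_{S^{ghk}_{rst}}$ as an $e$-weighted combination of chromatic symmetric functions of ordinary spiders $S(\cdot,\cdot,\cdot)$, possibly degenerate. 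Each degenerate spider is a path, or a spider with a leg of length~$1$.

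\textbf{Step 2 (reducing to paths).} I would then express these spiders through paths. For spiders with a leg of length~$1$ I would use \cref{prop:3spider}. More generally I would use the triple-deletion type identity behind \citet[Lem.~4.4]{Zhe22}, written as
\[
X_{S(a,b,c)}=X_{\text{path}}+\dots-X_{\cdot}X_{\cdot}.
\]
After this step, everything is a signed sum of terms $e_I X_{n_1}X_{n_2}$ with explicit integer coefficients.

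\textbf{Step 3 (pairing negatives with positives).} Next I would rewrite the long paths using the recurrence $X_n=e_n+\sum_i(i-1)e_iX_{n-i}$ and its corollary $X_n-(k-1)e_kX_{n-k}\epos$. The aim is to absorb each negative term into a positive term of the same total degree. In particular, a term $-(i-1)e_iX_{\dots}$ from Step~1 should be cancelled by the matching summand $(i-1)e_iX_{n-i}$ inside a longer path~$X_n$.

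The leg of length~$1$ in the first leg ($S^{100}$) or the second leg ($S^{010}$) is what makes this matching exact. The assumption $r\ge s\ge t$ should guarantee that the larger feet supply enough positive mass. For $S^{001}_{(t+1)(t+1)t}$, the near-equality of the feet should again make the leftover a sum of differences of the form $e_{i-1}s_{\lambda1}-e_is_\lambda$. Each such difference is Schur positive by \cref{lem:forced-cell-pieri} once it is written in Schur form.

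\textbf{Main obstacle.} I expect the hard step to be controlling the product terms $-X_{a+1}X_{b+1}$ that come from the branching vertex. These are not $e$-positive cancellations, and they genuinely require Schur, not $e$, arguments. My first attempt would be an induction on~$r+s+t$, using \cref{prop:arithmetic-progression}. The vertices of a foot other than its root share a closed neighborhood, so growing a foot by joining a new vertex~$w$ to them one at a time interpolates linearly between clique-spiders with adjacent parameters. Schur positivity of the two endpoints would then propagate to everything in between, leaving only the extreme base cases ($r=s=t$, or $t=1$) to be checked directly by Steps~1--3.
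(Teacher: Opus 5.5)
Your proposal has a genuine gap. Steps~1 and~2 are exact bookkeeping and nothing more. Expanding the three feet by \cref{prop:KPG}, and then the spiders by the triple-deletion formula, rewrites $X_{S^{ghk}_{rst}}$ as a signed sum of terms $e_IX_{n_1}X_{n_2}$. The whole content of the theorem is the Schur positivity of that signed sum, and Step~3, which is supposed to supply it, is not an argument. The pairing you describe is an $e$-level manipulation: absorbing $-(i-1)e_iX_{\dots}$ into the summand $(i-1)e_iX_{n-i}$ of the path recurrence, or using $X_n-(k-1)e_kX_{n-k}\epos$. The part that must be Schur-level is the terms $-X_{a+1}X_{b+1}$, which you yourself flag, and they are left with no argument at all. ``The larger feet supply enough positive mass'' and ``the leftover should be a sum of differences $e_{i-1}s_{\lambda1}-e_is_\lambda$'' are hopes, not verified identities. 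This matters because the hypotheses are sharp. $S^{100}_{122}$ (where $r<s$) and $S^{001}_{221}$ (the excluded case $t=1$) are both the claw $S(1,1,1)$, which is not Schur positive by \cref{thm:s+.bipartite}. Your expansion applies verbatim to these graphs, so a correct argument must use $r\ge s\ge t$ and $t\ge 2$ quantitatively at an identifiable step, and your outline never says where.

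The fallback induction does not work as described either. Take the $v_j$ in \cref{prop:arithmetic-progression} to be the non-root vertices of a foot $K_m$ and $w$ a new vertex. Then in $G_{m-1}$ the foot together with $w$ induces $K_{m+1}$ minus the edge from $w$ to the root, which is not a clique-spider. Moreover, the identity transfers positivity from the endpoints to the interior terms, never to an endpoint. What it does give for clique-spiders is \cref{lem:rec:clique-spider}, where the target $S^{(g+1)hk}_{rst}$ is the $j=1$ term between $K_r\sqcup S^{ghk}_{1st}$ and $S^{ghk}_{(r+1)st}$. This trades leg length for foot size, so for $S^{100}_{rst}$ the route needs Schur positivity of $S^{000}_{(r+1)st}$ (three cliques glued at one vertex), which you do not have. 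The base cases you leave to Steps~1--3 inherit the gap above.

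The paper does not prove this statement at all. It is the seed from which \cref{lem:rec:clique-spider,lem:relative-bootstrap} propagate positivity to longer legs, and it is imported from \citet[Thm.~7.3, Cor.~7.4, and Thm.~8.3]{WW26X}. The only argument the paper gives is that $(t+1,t+1,t)$ meets the criterion of \citet[Thm.~8.3]{WW26X}: $(r,s)=(3,3)=(2t-1,t+1)$ when $t=2$, and $3t>2t+2=r+s$ when $t\ge 3$. You should either do the same or actually supply the missing Schur-level positivity argument.
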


The last assertion follows from \citet[Thm.~8.3]{WW26X}:
for~$t=2$,
the triple~$(3,3,2)$ satisfies the criterion $(r,s)=(2t-1,\,t+1)$;
for~$t\ge 3$,
the triple $(t+1,\,t+1,\,t)$ satisfies the criterion $3t>r+s$
because $3t>2(t+1)$.

\section{Clique-spider positivity and propagation}\label{sec:master-recurrence}

We first establish the recurrences and clique-spider positivity results
used in the inductive proofs of \cref{thm:classification,thm:refined-bounds}.
The main identity is \cref{prop:rec.master},
which we call the \emph{master recurrence}.
For integers $b,c\ge 0$ and $a\ge 1$,
put
\[
\mathcal C_a(b,c)
=S^{1bc}_{a11}
\quad\text{and}\quad
C_a(b,c)
=X_{\mathcal C_a(b,c)}/(a-1)!.
\]
Then $C_a(p,q)=C_a(q,p)$.

\begin{proposition}[Master recurrence]\label{prop:rec.master}
For any spider $S(a,b,c)$,
\[
X_{S(a,b,c)}
=C_a(b,c)
+\sum_{i=2}^{a-1}(i-1) e_i X_{S(a-i,\,b,\,c)}.
\]
\end{proposition}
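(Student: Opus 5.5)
The plan is to obtain the identity as a direct specialization of \cref{prop:KPG}, choosing the rooted graph so that attaching a clique recovers $\mathcal C_a(b,c)$ and attaching paths recovers the spiders $S(a-i,b,c)$.

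\emph{Choice of rooted graph.} Let $G$ be the tree $S(1,b,c)$, rooted at the leaf $u$ of its leg of length~$1$. So $G$ consists of a center $v$ adjacent to $u$, with paths of lengths $b$ and $c$ hanging from $v$. Attaching a path of length $j$ at $u$ extends that leg to length $1+j$. Hence
\[
G^j=S(1+j,b,c) \qquad\text{for all } j\ge 0.
\]

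\emph{Identifying the clique graph.} Next I identify $P^0(G,K_a)$ with $\mathcal C_a(b,c)=S^{1bc}_{a11}$. In $P^0(G,K_a)$, the center $v$ is joined by an edge to the root $u$, which now lies in a copy of $K_a$. The other two legs of lengths $b,c$ end at feet $K_1$, and a foot $K_1$ is just the endpoint of its path. Both graphs have $a+b+c+1$ vertices, so the identification is immediate.

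\emph{Applying \cref{prop:KPG} with $m=a$.} This gives
\[
X_{\mathcal C_a(b,c)}=(a-1)!\sum_{i=0}^{a-1}(1-i)\,e_i\,X_{S(a-i,b,c)},
\]
using $G^{a-1-i}=S(a-i,b,c)$. Now divide by $(a-1)!$:
\begin{itemize}
\item the $i=0$ term is $X_{S(a,b,c)}$, since $e_0=1$;
\item the $i=1$ term vanishes, since its coefficient is $1-i=0$;
\item the terms with $2\le i\le a-1$ contribute $-(i-1)e_iX_{S(a-i,b,c)}$.
\end{itemize}
Moving the sum to the other side yields the master recurrence. Every index $a-i$ in the sum satisfies $a-i\ge 1$, so each term is a genuine spider.

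\emph{Degenerate cases.} When $a=1$, the sum is empty and $K_1$ is a single vertex, so $\mathcal C_1(b,c)=S(1,b,c)$ directly. The same argument also works if $b$ or $c$ is $0$, with the convention for length-$0$ legs.

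The only step needing care is the graph identification $P^0(G,K_a)=\mathcal C_a(b,c)$ together with $G^j=S(1+j,b,c)$. Everything else is bookkeeping.
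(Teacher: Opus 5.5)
Your proposal is correct and is essentially the paper's proof: root $S(1,b,c)$ at the leaf of its length-$1$ leg, identify $P^0(S(1,b,c),K_a)$ with $\mathcal C_a(b,c)$ and the extensions $G^{a-1-i}$ with $S(a-i,b,c)$, apply \cref{prop:KPG} with $m=a$, and rearrange. One small point: the isomorphism $P^0(G,K_a)\cong\mathcal C_a(b,c)$ follows from your structural description of the two graphs, not from their having the same number of vertices.
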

\begin{proof}
Let $H=S(1,b,c)$,
rooted at the leaf of the leg of length~$1$.
Then
\[
P^0(H,K_a)
=S^{1bc}_{a11}
\quad\text{and}\quad
H^{a-1-i}
=S(a-i,\,b,\,c)
\text{ for $0\le i\le a-1$}.
\]
Taking~$m=a$ in \cref{prop:KPG} gives
\[
X_{S^{1bc}_{a11}}
=(a-1)!\sum_{i=0}^{a-1}(1-i) e_i X_{S(a-i,\,b,\,c)}
=(a-1)!\brk3{
X_{S(a,b,c)}
-\sum_{i=2}^{a-1}(i-1) e_i X_{S(a-i,\,b,\,c)}}.
\]
Dividing by~$(a-1)!$ and rearranging gives the desired formula.
\end{proof}

We use \cref{prop:rec.master} together with the Schur positivity
of $C_a(b,c)$ and suitable lower bounds
for the Schur coefficients of the smaller spiders.
We will show the Schur positivity of a series of graphs,
starting from that of clique-spiders with leg length multiset $\{0, 0, 1\}$
given by \cref{thm:s+:clique-spiders}.

We will need the following relation among the clique-spiders~%
$S^{(g+1)hk}_{rst}$,
$S^{ghk}_{(r+1)st}$,
and $S^{ghk}_{1st}$.

\begin{lemma}[Clique-spider recurrence]\label{lem:rec:clique-spider}
For any $g,h,k\in \mathbb Z_{\ge 0}$ and $r,s,t\in \mathbb Z_{\ge 1}$,
\[
rX_{S^{(g+1)hk}_{rst}}
=X_{S^{ghk}_{(r+1)st}}
+(r-1) X_{K_r}X_{S^{ghk}_{1st}}.
\]
\end{lemma}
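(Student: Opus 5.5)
The plan is to apply \cref{prop:arithmetic-progression} with $q=r$, choosing the base graph so that the two endpoints of the progression are the graphs on the right-hand side.

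\textbf{Setup.} Let $H=S^{ghk}_{1st}$. Its first foot is the single vertex $K_1$, which I call $w$. So $w$ is the endpoint of the first leg, which has length $g$; when $g=0$, $w$ is the center itself. Let $G_0=H\sqcup K_r$, and let $v_1,\dots,v_r$ be the vertices of the disjoint clique $K_r$.

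\textbf{Checking the hypotheses.} In $G_0$ the vertices $v_1,\dots,v_r$ are distinct. They share the closed neighborhood $V(K_r)$, and $w$ is adjacent to none of them. So \cref{prop:arithmetic-progression} applies with $q=r$. Let $G_j$ be $G_0$ with the edges $wv_1,\dots,wv_j$ added.

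\textbf{Identifying the terms.}
\begin{itemize}
\item Adding the single edge $wv_1$ extends the first leg by one edge, from $w$ to the clique vertex $v_1$, and $v_1$ becomes the root of the foot $K_r$. Hence $G_1\cong S^{(g+1)hk}_{rst}$.
\item Adding all the edges $wv_1,\dots,wv_r$ makes $\{w,v_1,\dots,v_r\}$ a clique $K_{r+1}$ rooted at $w$, at the end of a leg of length $g$. Hence $G_r\cong S^{ghk}_{(r+1)st}$.
\item By multiplicativity over disjoint unions, $X_{G_0}=X_{K_r}X_{S^{ghk}_{1st}}$.
\end{itemize}

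\textbf{Conclusion.} The case $j=1$ of \cref{prop:arithmetic-progression} gives
\[
X_{G_1}=\frac{r-1}{r}X_{G_0}+\frac1r X_{G_r}.
\]
Multiplying by $r$ gives exactly the claimed identity.

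The degenerate cases need no special treatment. When $r=1$ we have $q=1$, so $G_1=G_q$ and the identity reduces to $X_{S^{(g+1)hk}_{1st}}=X_{S^{ghk}_{2st}}$, which holds because $K_2$ is an edge. When $g=0$ the argument is unchanged, since $w$ is then the center.

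The only step needing any care is the identification of $G_1$ and $G_r$ with clique-spiders. The point to get right is the convention for where the foot is rooted, which is why $w$ must be taken as the trivial foot of $S^{ghk}_{1st}$.
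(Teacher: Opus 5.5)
Your proof is correct and is essentially the paper's argument: the paper also applies \cref{prop:arithmetic-progression} with $G_0=K_r\sqcup S^{ghk}_{1st}$, $v_1,\dots,v_r$ the vertices of $K_r$, and $w$ the vertex of the $K_1$-foot. It identifies $G_1\cong S^{(g+1)hk}_{rst}$ and $G_r\cong S^{ghk}_{(r+1)st}$, and takes $(j,q)=(1,r)$ together with $X_{G\sqcup H}=X_GX_H$. Your extra remarks on the degenerate cases $r=1$ and $g=0$ are accurate but not required.
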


\begin{proof}
We shall apply \cref{prop:arithmetic-progression}.
In its notation, we let $G_0=K_r\sqcup S^{ghk}_{1st}$,
let~$v_1,\dots,v_r$ be the vertices of~$K_r$,
and let~$w$ be the vertex of the~$K_1$-foot.
Then $G_1\cong S^{(g+1)hk}_{rst}$ and $G_r\cong S^{ghk}_{(r+1)st}$.
Taking $(j,q)=(1,r)$ and using $X_{G\sqcup H}=X_G X_H$
gives the desired formula.
\end{proof}

We call a clique-spider \emph{special} if it is of the form $S_{1uv}^{ghk}$,
namely, if it has a vertex foot.
\cref{lem:relative-bootstrap} reduces the Schur positivity
of general clique-spiders to that of special ones.

\begin{lemma}[Reduction to special clique-spiders]\label{lem:relative-bootstrap}
Fix $g,h,k,L\in \mathbb Z_{\ge 0}$ and $s,u,v\in \mathbb Z_{\ge 1}$.
Set
\[
F_j
=S^{(g+j)hk}_{(L+s-j)uv}
\quad\text{and}\quad
D_j
=S^{(g+j)hk}_{1uv}.
\]
If the graphs $F_0,D_0,D_1,\dots,D_{L-1}$ are Schur positive,
then so are the graphs $F_1,\dots,F_L$.
\end{lemma}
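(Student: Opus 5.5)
Induction on $j$, using \cref{lem:rec:clique-spider} to pass from $F_{j-1}$ to $F_j$. Apply \cref{lem:rec:clique-spider} with leg lengths $(g+j-1,h,k)$ and foot sizes $(r,u,v)$, where $r=L+s-j$ is the foot size of $F_j$. The left side is then $rX_{S^{(g+j)hk}_{ruv}}=rX_{F_j}$. The first term on the right is $X_{S^{(g+j-1)hk}_{(r+1)uv}}$, and since $r+1=L+s-(j-1)$ this is exactly $X_{F_{j-1}}$. The second term is $(r-1)X_{K_r}X_{S^{(g+j-1)hk}_{1uv}}=(r-1)X_{K_r}X_{D_{j-1}}$. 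So for $1\le j\le L$,
\[
(L+s-j)\,X_{F_j}
=X_{F_{j-1}}+(L+s-j-1)\,X_{K_{L+s-j}}\,X_{D_{j-1}}.
\]

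Now run the induction on $j$ from $1$ to $L$. The base case is the hypothesis $F_0\spos$. For the inductive step, assume $F_{j-1}\spos$. Because $j\le L$ and $s\ge 1$, we have $r=L+s-j\ge s\ge 1$, so the coefficient $r$ on the left is positive and $r-1\ge 0$ on the right.

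The right-hand side is Schur positive term by term. The term $X_{F_{j-1}}$ is Schur positive by the induction hypothesis. In the second term, $X_{K_r}=r!\,e_r$ is $e$-positive and hence Schur positive. Also $X_{D_{j-1}}\spos$, since $j-1\in\{0,\dots,L-1\}$ is covered by the hypotheses. By the Littlewood--Richardson rule, or directly by \cref{prop:Pieri}, the product $e_r X_{D_{j-1}}$ is Schur positive. Multiplying it by $(r-1)\,(r-1)!\ge 0$ keeps it Schur positive. Dividing the whole identity by $r>0$ gives $F_j\spos$, which completes the induction.

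There is no real obstacle here. The one step needing care is the index bookkeeping. One must check that \cref{lem:rec:clique-spider} applies with its parameters $g$, $r$ replaced by $g+j-1$, $L+s-j$, so that $g+j-1\ge 0$ and $L+s-j\ge 1$. One must also check that the foot sizes $u,v$ and leg lengths $h,k$ of the other two limbs are carried along unchanged. Finally, one must confirm that the recurrence only ever calls on $D_0,\dots,D_{L-1}$, never $D_L$. The boundary case $r=1$ (when $s=1$ and $j=L$) is harmless, because the second term then vanishes.
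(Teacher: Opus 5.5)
Your proof is correct and is essentially the paper's argument. The paper applies \cref{lem:rec:clique-spider} with first-leg length $g+j$ and $r=L+s-j-1$ for $0\le j\le L-1$, which is your identity with the index shifted by one. It then concludes by induction on $j$, using $r\ge 1$ and the $e$-positivity of $X_{K_n}$, exactly as you do.
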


\begin{proof}
Let $0\le j\le L-1$.
Applying \cref{lem:rec:clique-spider}
with first-leg length~$g+j$ and $r=L+s-j-1$ gives
\[
(L+s-j-1) X_{F_{j+1}}
=X_{F_j}+(L+s-j-2) X_{K_{L+s-j-1}}X_{D_j}.
\]
Note that $L+s-j\ge 2$.
Since $X_{K_n}\epos$,
induction on~$j$ proves the assertion.
\end{proof}

We next use this reduction to establish the Schur positivity
of the family~$S^{1pq}_{rs1}$ with $q<r$.

\begin{proposition}\label{prop:s+.S1pqrs1}
We have $S^{1pq}_{rs1}\spos$ for any integers $p\ge 0$,
$0\le q\le r-1$,
$r\ge 2$,
and $s\ge 1$.
\end{proposition}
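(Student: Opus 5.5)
The plan is to obtain \cref{prop:s+.S1pqrs1} by two nested applications of \cref{lem:relative-bootstrap}. Each application moves one limb back to a limb of length~$0$ whose foot is a larger clique. After both moves, what remains is a clique-spider of type~$S^{100}$ or~$S^{010}$, which is covered by \cref{thm:s+:clique-spiders}. Since clique-spiders are unchanged by permuting their limbs, I will apply \cref{lem:relative-bootstrap} to whichever limb is convenient, relabelled as the first limb.

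\emph{Step 1 (the case $p=0$, by induction on~$q$).} Fix $r\ge 2$ and $s\ge 1$. Apply \cref{lem:relative-bootstrap} to the third limb of $S^{10q}_{rs1}$, which has leg~$q$ and foot~$K_1$, with parameters $g=0$, $L=q$ and foot-offset $1$. Then
\[
F_j=S^{1\,0\,j}_{r\,s\,(q+1-j)},
\qquad
F_q=S^{10q}_{rs1},
\qquad
D_j=S^{10j}_{rs1}\quad(0\le j\le q-1).
\]
The graphs $D_j$ lie in the same family with smaller last leg~$j<q\le r-1$, so they are Schur positive by induction on~$q$. The base case $q=0$ is $S^{100}_{rs1}$, which is handled by the argument below with $q+1=1$. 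It remains to show that $F_0=S^{100}_{r,s,q+1}\spos$. Here the leg of length~$1$ carries the foot~$K_r$, and the hypothesis $q\le r-1$ gives $r\ge q+1$. Hence $r$ is either the largest of the three foot sizes $\{r,s,q+1\}$, or it lies between $q+1$ and~$s$. In the first case \cref{thm:s+:clique-spiders} gives positivity via~$S^{100}$, and in the second via~$S^{010}$. Ties do no harm, because the theorem allows weak inequalities.

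\emph{Step 2 (general~$p$, by induction on~$p$).} Apply \cref{lem:relative-bootstrap} to the second limb of $S^{1pq}_{rs1}$, which has leg~$p$ and foot~$K_s$, with $g=0$ and $L=p$. Then
\[
F_p=S^{1pq}_{rs1},
\qquad
F_0=S^{10q}_{r,(p+s),1},
\qquad
D_j=S^{1jq}_{r11}\quad(0\le j\le p-1).
\]
The graph $F_0$ is an instance of Step~1, with $s$ replaced by $p+s$, and the condition $q\le r-1$ is unaffected. Each $D_j$ is an instance of the proposition with smaller second leg~$j<p$ and with $s=1$, so it is Schur positive by induction on~$p$, the base $p=0$ being Step~1. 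The lemma then gives $F_p\spos$.

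The main obstacle is choosing the order of the reductions so that the special graphs $D_j$ stay inside the hypotheses of the proposition. If one shrinks the $q$-limb first for general~$p$, the resulting $D_j$ has a vertex foot on a leg of length~$j$ placed where the constraint~$\le r-1$ can fail. Reducing~$p$ first avoids this problem. The only genuine input is the comparison $r\ge q+1$, which places the foot~$K_r$ on the length-$1$ leg in the largest or middle position in $F_0$, and that is exactly the range where \cref{thm:s+:clique-spiders} applies. I would also check routinely that \cref{lem:relative-bootstrap} applies with the foot-offset parameter equal to~$1$ in Step~1 and equal to~$s$ in Step~2.
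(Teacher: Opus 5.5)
Your proof is correct and is essentially the paper's argument. You use the same two applications of \cref{lem:relative-bootstrap}, with parameters $(g,h,k,L;s,u,v)=(0,1,0,q;1,r,s)$ when $p=0$ and $(0,1,q,p;s,r,1)$ when $p\ge 1$, and the same use of $q+1\le r$ to place $F_0=S^{100}_{rs(q+1)}$ under \cref{thm:s+:clique-spiders}; the paper simply packages your nested inductions (on $q$, then on $p$) as one strong induction on $p+q$.

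One small correction to your closing remark, which does not affect the proof. Shrinking the $q$-limb first would keep $D_j=S^{1pj}_{rs1}$ inside the family, since $j<q\le r-1$. What actually fails is the initial graph $F_0=S^{1p0}_{rs(q+1)}$, which for $p\ge 1$ in general lies outside both the family and the range of \cref{thm:s+:clique-spiders}.
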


\begin{proof}
We use strong induction on $n=p+q$,
simultaneously for all admissible~$r$ and $s$.
For~$n=0$,
the result follows from \cref{thm:s+:clique-spiders}.
Let~$n\ge 1$.
Fix admissible~$(p,q,r,s)$ with $p+q=n$.

First suppose~$p=0$.
We apply \cref{lem:relative-bootstrap} with parameters
\[
(g,h,k,L;s,u,v)
=(0,1,0,q;1,r,s).
\]
The graphs in that lemma are
\[
F_j
=S^{j10}_{(q+1-j)rs}\cong S^{10j}_{rs(q+1-j)}
\quad\text{and}\quad
D_j
=S^{j10}_{1rs}\cong S^{10j}_{rs1}.
\]
Since $q+1\le r$,
\cref{thm:s+:clique-spiders} gives Schur positivity
of the initial graph $F_0\cong S^{100}_{rs(q+1)}$.
For $0\le j<q$,
we have~$D_j\spos$ by the induction hypothesis.
Thus \cref{lem:relative-bootstrap} gives Schur positivity
of~$F_q\cong S^{10q}_{rs1}$.

Now suppose~$p\ge 1$.
We apply \cref{lem:relative-bootstrap} with
\[
(g,h,k,L;s,u,v)
=(0,1,q,p;s,r,1).
\]
The graphs in that lemma are
\[
F_j
=S^{j1q}_{(p+s-j)r1}\cong S^{1jq}_{r(p+s-j)1}
\quad\text{and}\quad
D_j
=S^{j1q}_{1r1}\cong S^{1jq}_{r11}.
\]
By the induction hypothesis,
the initial graph $F_0\cong S^{10q}_{r(p+s)1}$
and the graphs~$D_j$ with $0\le j<p$ are all Schur positive.
Hence $S^{1pq}_{rs1}\cong F_p\spos$.
\end{proof}

Note that for $(p,q,r,s)=(a,1,2,1)$,
\cref{prop:s+.S1pqrs1} recovers $X_{S(a,2,1)}\spos$ in \cref{thm:TW+}.
Next,
we establish the Schur positivity for the family $S^{1pq}_{rs1}$
without the restriction $q<r$ but with $s\le r+1$.

\begin{proposition}\label{prop:universal-core}
We have $S^{1pq}_{rs1}\spos$ 
for any integers~$p,q\ge 0$, 
$r\ge 2$,
and $1\le s\le r+1$.
In particular,
$C_a(p,q)\spos$ and~$S^{1pq}_{a31}\spos$ for any integers~$p,q\ge 0$ and~$a\ge 2$.
\end{proposition}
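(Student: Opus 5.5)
We prove the statement by strong induction on $n=p+q$, simultaneously for all admissible $(r,s)$. Throughout we use \cref{lem:relative-bootstrap}, which trades one unit of foot size for one unit of leg length, with \cref{thm:s+:clique-spiders} and \cref{prop:s+.S1pqrs1} as seeds. By \cref{prop:s+.S1pqrs1}, only the cases $q\ge r$ remain. The ``in particular'' part is then immediate. Indeed, $C_a(p,q)$ is a positive multiple of $X_{S^{1pq}_{a11}}$, which is the case $(r,s)=(a,1)$. Likewise $S^{1pq}_{a31}$ is the case $s=3\le a+1$ for $a\ge2$.

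\emph{Step 1: the case $p=0$.} Here the graph is $S^{10q}_{rs1}$, and we bootstrap along the third limb. Apply \cref{lem:relative-bootstrap} with $(g,h,k,L;s,u,v)=(0,1,0,q;1,r,s)$, exactly as in the $p=0$ case of \cref{prop:s+.S1pqrs1}. This gives $F_j\cong S^{10j}_{rs(q+1-j)}$ and $D_j\cong S^{10j}_{rs1}$. Each $D_j$ with $j<q$ is Schur positive by induction. The seed is $F_0\cong S^{100}_{rs(q+1)}$, with the leg of length~$1$ attached to the foot $K_r$. When $q\ge r$ we have $q+1>r$, so $K_r$ is no longer the largest foot. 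If $s\le r$, then $K_r$ is the middle foot, and $F_0$ is Schur positive by the assertion $S^{010}\spos$ of \cref{thm:s+:clique-spiders}, after reordering the feet. If $s=r+1$, then $K_r$ is the smallest foot, and neither $S^{100}$ nor $S^{010}$ applies. In that case we would instead start the bootstrap at $L=q-1$, so that the seed has leg lengths $\{0,1,1\}$ and feet $r,r+1,r$ or similar. We would then try to reach it from $S^{001}_{(r+1)(r+1)r}$ in \cref{thm:s+:clique-spiders}, combined with one application of \cref{lem:rec:clique-spider} on the first limb.

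\emph{Step 2: the case $p\ge1$ and $s\le r$.} We bootstrap the second limb by a single step. Apply \cref{lem:relative-bootstrap} with $(g,h,k,L;s,u,v)=(0,1,q,1;s,r,1)$, the leg $p-1$ playing the role of $g$. This gives $F_0\cong S^{1(p-1)q}_{r(s+1)1}$ and $D_0\cong S^{1(p-1)q}_{r11}$. Both have $p+q-1<n$. Moreover $s+1\le r+1$ and $1\le r+1$, so both lie in the admissible range and are Schur positive by the induction hypothesis. The lemma therefore yields $F_1\cong S^{1pq}_{rs1}\spos$.

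\emph{Step 3: the case $s=r+1$ with $p\ge1$ (main obstacle).} Here the one-step bootstrap of Step~2 would produce a seed with foot $r+2$, which is outside the range. We would first try \cref{lem:rec:clique-spider} on the second limb with $(g,r)\mapsto(p-1,s)$, that is, with $K_{r+1}$ as the foot being enlarged. This expresses $(r+1)X_{S^{1pq}_{r(r+1)1}}$ as $X_{S^{1(p-1)q}_{r(r+2)1}}$ plus a nonnegative multiple of $X_{K_{r+1}}X_{S^{1(p-1)q}_{r11}}$, so the same foot-size problem reappears one level down. The alternative is to enlarge the first foot instead. Using \cref{lem:relative-bootstrap} on the first limb, which carries the leg of length~$1$ and the foot $K_r$, relates $S^{1pq}_{r(r+1)1}$ to $S^{0pq}_{(r+1)(r+1)1}$ together with $K_r$ times $S^{0pq}_{1(r+1)1}$. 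After re-rooting, the latter graphs are clique-spiders of the form already handled, with the two large feet of equal size $r+1$, which is the situation of the $S^{001}_{(t+1)(t+1)t}$ seed. Making this symmetric reduction close up under the induction is the delicate point. I expect it to require proving the $s=r+1$ family and the family $S^{0pq}_{(r+1)(r+1)1}$ together in one simultaneous induction on $p+q$.
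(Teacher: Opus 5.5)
Your Step 1 for $s\le r$ and your Step 2 are sound. Step 2 is a single application of \cref{lem:rec:clique-spider} to the limb with foot $K_s$, and it is a nice shortcut. The gap is the case $s=r+1$, and it is not peripheral.

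Step 3 is left open, and Step 1 with $s=r+1$, $q>r$ is only sketched. The seed your restart at $L=q-1$ produces is $S^{110}_{qr(r+1)}$. Applying \cref{lem:rec:clique-spider} to its first limb (foot $K_q$), as you suggest, gives $S^{010}_{(q+1)r(r+1)}$, whose unit leg sits on the strictly smallest foot. So this works only when $q=r$. You would have to expand the $K_r$-limb instead, which gives $S^{010}_{(r+1)q(r+1)}$ and $S^{010}_{1q(r+1)}$.

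More seriously, Step 2 at $s=r$ needs $F_0\cong S^{1(p-1)q}_{r(r+1)1}$. So in your scheme every chain $(p,q,r,s)\to(p-1,q,r,s+1)\to\cdots$ with $p\ge r+2-s$ and $q\ge r$ runs into the unproved case. For example, $C_a(p,q)$ with $p>a$ and $q\ge a$ needs $S^{1(p-a)q}_{a(a+1)1}$. Also, $S^{1pq}_{231}$ from the ``in particular'' clause is itself such a case.

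Your fallback in Step 3 does not close either. The graph $S^{0pq}_{1(r+1)1}$ is a lollipop, isomorphic to $S^{10(p+q-1)}_{(r+1)11}$, and hence covered. But for $p\ge 2$ (and $q\ge r\ge 2$), $S^{0pq}_{(r+1)(r+1)1}$ is not of the form $S^{1p'q'}_{r's'1}$ for any choice of center. You give no argument for it, and expanding its $p$-limb just produces a foot $K_{r+2}$ again.

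The missing idea is to collapse a whole limb into one large clique, instead of trading one unit at a time starting from $K_s$.
\begin{itemize}
\item For $q\ge r$, let $(x,\beta)$ be whichever of $(p,s)$ and $(q,1)$ has the larger sum, with ties going to $(q,1)$. Let $(y,\gamma)$ be the other pair, and set $R=x+\beta-1=\max(p+s-1,q)\ge r$.
\item The seed $S^{010}_{(r+1)R(y+\gamma)}$ has its unit leg on $K_R$. This foot can be strictly smallest only when $R=r$ and $y+\gamma=r+1$, where the seed is $S^{001}_{(r+1)(r+1)r}$. So \cref{thm:s+:clique-spiders} always applies.
\item First use of \cref{lem:relative-bootstrap}: convert $K_{r+1}$ into a unit leg with foot $K_r$. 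The auxiliary graph is $S^{010}_{1R(y+\gamma)}$.
\item Second use: convert $K_{y+\gamma}$ into a leg of length $y$ with foot $K_\gamma$. The auxiliaries are $S^{11j}_{Rr1}$ with $j\le R-1$, covered by \cref{prop:s+.S1pqrs1}.
\item Third use: convert $K_R$ on its unit leg into a leg of length $x$ with foot $K_\beta$. The auxiliaries are $S^{1jy}_{r1\gamma}$ with $j<x$. These lie in the family with middle foot $1$ or $s$ and smaller $p+q$, so induction covers them.
\end{itemize}
Because the last unrolling descends from $R$ to $\beta\in\{1,s\}$, no foot ever leaves the admissible range. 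That is exactly what your one-step bootstrap cannot guarantee.
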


\begin{proof}
We use induction on $n=p+q$,
simultaneously for all admissible~$r,s$.
The cases $q<r$, including $n=0$,
follow from \cref{prop:s+.S1pqrs1}.
For the remaining cases,
assume $q\ge r$ and the result for every smaller value of $p+q$.
Order the pairs~$(p,s)$ and~$(q,1)$ as~$(x,\beta)$ and~$(y,\gamma)$
so that $x+\beta\ge y+\gamma$,
choosing $(x,\beta)=(q,1)$ in a tie.
Put
\[
R
=x+\beta-1
=\max(p+s-1,\,q).
\]
Then $R\ge r\ge 2$,
$y+\gamma\le R+1$,
and $x+y=p+q=n$.
Moreover, we have $x\ge 1$,
since otherwise~$x=0$ would imply $(x,\beta)=(p,s)$,
and the tie convention would give~$s>q+1$,
contradicting $s\le r+1\le q+1$.

We prove the Schur positivity of the following four graphs in order:
\[
G_0
=S^{010}_{(r+1)R(y+\gamma)},
\qquad G_1
=S^{110}_{rR(y+\gamma)},
\qquad G_2
=S^{11y}_{rR\gamma},
\qquad G_3
=S^{1xy}_{r\beta\gamma}\cong S^{1pq}_{rs1}.
\]
\begin{enumerate}
\item
If the marked foot~$K_R$ is not strictly smallest,
\cref{thm:s+:clique-spiders} applies after permuting the limbs.
Otherwise, $R<r+1$ and~$R<y+\gamma$.
Together with the preceding bounds,
these inequalities force~$R=r$ and~$y+\gamma=R+1$.
Thus $G_0\cong S^{001}_{(R+1)(R+1)R}\spos$ by \cref{thm:s+:clique-spiders}.

\item
We apply \cref{lem:relative-bootstrap} with
\[
(g,h,k,L;s,u,v)
=(0,1,0,1;r,\,R,\,y+\gamma).
\]
Then~$F_0=G_0\spos$,
and $D_0=S^{010}_{1R(y+\gamma)}\spos$ by \cref{thm:s+:clique-spiders}.
Hence~$G_1=F_1\spos$.

\item
If~$y=0$, then~$G_2=G_1\spos$.
For~$y\ge 1$,
we apply \cref{lem:relative-bootstrap} with
\[
(g,h,k,L;s,u,v)
=(0,1,1,y;\gamma,r,R).
\]
Then $F_0=S^{011}_{(y+\gamma)rR}\cong G_1\spos$,
and $D_j=S^{j11}_{1rR}\cong S^{11j}_{Rr1}$ for $0\le j<y$.
Since $j\le y-1\le R-\gamma\le R-1$, \cref{prop:s+.S1pqrs1} gives~$D_j\spos$.
It follows that~$G_2\cong F_y\spos$.

\item
If~$x=1$,
then~$R=\beta$ and~$G_3=G_2\spos$.
For~$x\ge 2$,
we apply \cref{lem:relative-bootstrap} with
\[
(g,h,k,L;s,u,v)
=(1,\,1,\,y,\,x-1;\,\beta,r,\gamma).
\]
Since $x-1+\beta=R$,
its initial graph is $F_0=S^{11y}_{Rr\gamma}\cong G_2\spos$.
Its auxiliary graphs satisfy
\[
D_{j-1}\cong S^{1jy}_{r1\gamma}
\qquad(1\le j<x).
\]
If~$\gamma=1$,
this belongs to the stated family with middle foot order~$1$.
Otherwise~$\gamma=s$ and $D_{j-1}\cong S^{1yj}_{rs1}$.
In either case, $j+y<x+y=n$,
and~$D_{j-1}\spos$ by induction.
Hence~$G_3\cong F_{x-1}\spos$.
\end{enumerate}
This completes the induction.
Taking~$s=1$ and~$s=3$ gives the two stated consequences.
\end{proof}

\begin{lemma}\label{lem:core-recurrence}
For any integers $p,q\ge 0$ and $a\ge 2$,
\[
C_a(p+2,\,q)
=e_2 C_a(p,q)
+\frac{1}{2(a-1)!}X_{S^{1pq}_{a31}}.
\]
As a consequence,
\[
C_a(p,q)
\gs \begin{dcases*}
e_2^{(p+q)/2-1}C_a(1,1), & if $p$ and $q$ are odd,
\\
e_2^{(p+q)/2}C_a(0,0), & if $p$ and $q$ are even,
\\
e_2^{(p+q-1)/2}C_a(1,0), & if $p+q$ is odd.
\end{dcases*}
\]
\end{lemma}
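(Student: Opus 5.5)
The identity should come from \cref{lem:rec:clique-spider} with $r=2$, applied to the limb of $\mathcal C_a(p+2,q)$ whose leg has length $p+2$. The inequality then follows by iterating it, using \cref{prop:universal-core} to discard the extra term.

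For the identity, the starting observation is that a leg of length $p+2$ ending in a $K_1$-foot is the same graph as a leg of length $p+1$ ending in a $K_2$-foot. Here the $K_2$ is rooted at one of its two vertices, and its other vertex is the leaf. Hence
\[
\mathcal C_a(p+2,q)=S^{1(p+2)q}_{a11}\cong S^{1(p+1)q}_{a21}.
\]
Next I permute the limbs so that this one comes first. Then I apply \cref{lem:rec:clique-spider} with first-leg length $g=p$, first foot $r=2$, and the other two limbs being the $K_a$-limb with leg $1$ and the $K_1$-limb with leg $q$. This gives
\[
2X_{S^{1(p+1)q}_{a21}}=X_{S^{1pq}_{a31}}+X_{K_2}X_{S^{1pq}_{a11}}.
\]
Since $X_{K_2}=2e_2$, dividing by $2(a-1)!$ gives
\[
C_a(p+2,q)=e_2C_a(p,q)+\frac{1}{2(a-1)!}X_{S^{1pq}_{a31}}.
\]
The only care needed is checking that the limb permutation used in \cref{lem:rec:clique-spider} really matches the graphs in question. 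The lemma is stated for the first limb, and $S^{ghk}_{rst}$ is invariant under simultaneously permuting leg lengths and foot sizes.

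For the consequence, \cref{prop:universal-core} gives $S^{1pq}_{a31}\spos$ for all $p,q\ge0$ and $a\ge2$. The identity therefore yields $C_a(p+2,q)\gs e_2C_a(p,q)$. Multiplication by $e_2$ preserves $\gs$, because a product of Schur-positive functions is Schur positive by the Littlewood--Richardson rule. So I can iterate, lowering $p$ by $2$ at each step until it reaches $0$ or $1$. Then, using the symmetry $C_a(p,q)=C_a(q,p)$, I lower $q$ in the same way.

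The final step is parity bookkeeping. If $p$ and $q$ are both odd, the process ends at $C_a(1,1)$ after $(p-1)/2+(q-1)/2=(p+q)/2-1$ steps. If both are even, it ends at $C_a(0,0)$ after $(p+q)/2$ steps. If $p+q$ is odd, it ends at $C_a(1,0)$ or $C_a(0,1)=C_a(1,0)$ after $(p+q-1)/2$ steps. Each step contributes one factor of $e_2$, which gives the three stated bounds.
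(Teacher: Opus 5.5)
Your proof is correct and follows essentially the same route as the paper. The paper also applies \cref{lem:rec:clique-spider} with $(g,h,k;r,s,t)=(p,1,q;2,a,1)$, identifies $S^{(p+1)1q}_{2a1}\cong\mathcal C_a(p+2,q)$ and $S^{p1q}_{3a1}\cong S^{1pq}_{a31}$, and then iterates $C_a(p+2,q)\gs e_2C_a(p,q)$ using \cref{prop:universal-core} and the symmetry $C_a(p,q)=C_a(q,p)$.
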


\begin{proof}
Applying \cref{lem:rec:clique-spider}
with parameters $(g,h,k;r,s,t)=(p,1,q;2,a,1)$,
we obtain
\[
2X_{S^{(p+1)1q}_{2a1}}
=X_{S^{p1q}_{3a1}}+X_{K_2}X_{S^{p1q}_{1a1}}.
\]
Since $S^{(p+1)1q}_{2a1}\cong S^{(p+2)1q}_{1a1}
\cong S^{1(p+2)q}_{a11}
=\mathcal C_a(p+2,\,q)$,
$S^{p1q}_{1a1}\cong \mathcal C_a(p,q)$, and $X_{K_2}=2e_2$,
we obtain
\[
2X_{\mathcal C_a(p+2,\,q)}
=X_{S^{1pq}_{a31}}+2e_2 X_{\mathcal C_a(p,q)}.
\]
Dividing by $2(a-1)!$ gives the desired formula.

By \cref{prop:universal-core}, $C_a(p+2,\,q)\gs e_2 C_a(p,q)$.
Note that $C_a(p,q)=C_a(q,p)$.
Whenever $p\ge 2$ or $q\ge 2$,
we can apply the inequality once.
Repeatedly using it,
we obtain the desired consequence.
\end{proof}

\cref{cor:propagation} reduces \cref{thm:classification}(iii)
to spiders with two even legs and one leg of length~$1$.

\begin{corollary}\label{cor:propagation}
Fix $b,c\ge 1$.
If $S(1,b,c)\spos$,
then $S(a,b,c)\spos$ for any $a\ge 1$.
\end{corollary}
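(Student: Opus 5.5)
We argue by strong induction on $a$, with $b,c$ fixed, using the master recurrence (\cref{prop:rec.master}):
\[
X_{S(a,b,c)}
=C_a(b,c)+\sum_{i=2}^{a-1}(i-1)\,e_i\,X_{S(a-i,\,b,\,c)}.
\]
The case $a=1$ is the hypothesis $S(1,b,c)\spos$. Suppose now $a\ge 2$ and $S(a',b,c)\spos$ for every $1\le a'<a$.

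\begin{enumerate}
\item The first term $C_a(b,c)$ is Schur positive by \cref{prop:universal-core}, which applies because $a\ge 2$ and $b,c\ge 0$.
\item In each summand we have $2\le i\le a-1$, so $1\le a-i\le a-2<a$. Hence $X_{S(a-i,b,c)}\spos$ by the induction hypothesis.
\item Each $e_i$ is Schur positive, being $s_{1^i}$, and the coefficients $i-1$ are positive. The Littlewood--Richardson rule (or \cref{prop:Pieri}) therefore shows that each product $e_i X_{S(a-i,b,c)}$ is Schur positive.
\end{enumerate}

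A sum of Schur-positive terms is Schur positive, so $X_{S(a,b,c)}\spos$, completing the induction.

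For $a=2$ the sum is empty, and the statement reduces to $S(2,b,c)=\mathcal C_2(b,c)$. Indeed, $S^{1bc}_{211}$ attaches a $K_2$ through one edge, so it is a spider with legs $2,b,c$, and $C_2=X/1!$. So this case is again covered by \cref{prop:universal-core}.

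There is no real obstacle here. All the difficulty sits in \cref{prop:universal-core}, and the only points to check are the index ranges above: that $a-i\ge 1$, and that \cref{prop:universal-core} requires $a\ge 2$, which is why $a=1$ must serve as the base case.
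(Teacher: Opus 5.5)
Your proposal is correct and is essentially the paper's own proof. Both argue by induction on $a$ with $a=1$ as the base case, expand $X_{S(a,b,c)}$ by the master recurrence (\cref{prop:rec.master}), take $C_a(b,c)\spos$ from \cref{prop:universal-core}, and handle each term $(i-1)e_iX_{S(a-i,b,c)}$ by the induction hypothesis together with Pieri's rule. Your remark that for $a=2$ the sum is empty and $C_2(b,c)=X_{S(2,b,c)}$ is also correct, though it is already covered by the general inductive step.
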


\begin{proof}
We use induction on $a$,
with the case $a=1$ given by hypothesis.
Let $a\ge 2$.
The smaller spiders in \cref{prop:rec.master} are Schur positive
by the induction hypothesis,
and $C_a(b,c)\spos$ by \cref{prop:universal-core}.
Thus the recurrence gives $X_{S(a,b,c)}\spos$.
\end{proof}

\section{Critical coefficients and path-difference bounds}
\label{sec:coefficient-bounds}

This section provides the coefficient formulas and path estimates
used in \cref{sec:proof-two-odd,sec:proof-two-even}.
We first determine two Schur coefficients of spiders with exactly one even leg.
One of these coefficients gives the necessary condition
in \cref{thm:classification}(ii).
We then prove Schur bounds for path differences,
which enter the initial cases of the later inductions.

\subsection{Critical coefficients of spiders}\label{sec:s-}

Throughout this paper, for any spider graph,
we use~$N$ to denote the total number of its vertices,
and put $h=\floor{N/2}$.
Define $\delta_a(x,y)=\ceil{(5x+5y+2-a)/2}$.

\begin{proposition}\label{prop:critical}
Let~$a\ge 2$ be even and~$b,c\ge 1$ be odd,
and put $h=(a+b+c)/2$.
Then
\[
\Coeff{s_{(h+1)h}}{X_{S(a,b,c)}}
=1
\quad\text{and}\quad
\Coeff{s_{hh1}}{X_{S(a,b,c)}}
=\delta_a(b,c).
\]
As a consequence, if $S(a,b,c)\spos$,
then $a\le 5b+5c+2$.
\end{proposition}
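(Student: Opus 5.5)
We compute both coefficients with \cref{thm:s-coeff.XG}. For each two- or three-row shape we list the special ribbon tabloids and their signs. The coefficients then reduce to counting stable compositions of $G=S(a,b,c)$, which we evaluate using the fact that $G$ is a connected bipartite graph. Throughout, $N=a+b+c+1=2h+1$. Colour the center~$0$ and alternate colours along each leg. Leg~$a$ (even) contributes $a/2$ vertices of each colour. Legs $b,c$ (odd) contribute $(b+1)/2$ and $(c+1)/2$ vertices of colour~$1$, with one fewer of colour~$0$ in each. So the colour classes have sizes $h$ (colour~$0$) and $h+1$ (colour~$1$), and this is the unique bipartition of~$G$.

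\emph{Shape $(h+1)h$.} A special ribbon containing the first cell of row~$2$ must be removable from $(h+1,h)$ and must leave a Young diagram. The only such border strip is row~$2$ itself; a single ribbon covering both rows contains a $2\times2$ block once $h\ge2$. So the only special tabloid is the two horizontal rows, of height~$0$. Hence $\Coeff{s_{(h+1)h}}{X_G}=\widetilde a_{(h+1)h}(G)$. A stable composition of type $(h+1,h)$ is an ordered proper $2$-colouring with these class sizes, so there is exactly one.

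\emph{Shape $hh1$.} Here we enumerate the special tabloids by removal order and the ribbon containing the top cell $(3,1)$. I expect exactly four:
\begin{itemize}
\item three horizontal rows, type $(h,h,1)$, sign $+$;
\item $(3,1)$ together with row~$2$ and the cell $(1,h)$, plus the ribbon $(1,1..h-1)$, type $(h+2,h-1)$, sign $+$;
\item $(3,1)$ together with row~$2$, plus row~$1$, type $(h+1,h)$, sign $-$;
\item $(3,1)$ alone, then row~$2$ together with $(1,h)$, then the rest of row~$1$, type $(h+1,h-1,1)$, sign $-$.
\end{itemize}
Vertical dominos in column~$1$ are impossible, since the leftover cells of row~$2$ could not be covered by ribbons meeting the first column. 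By the uniqueness of the bipartition, $\widetilde a_{(h+2)(h-1)}(G)=0$ and $\widetilde a_{(h+1)h}(G)=1$. Hence
\[
\Coeff{s_{hh1}}{X_G}=\sum_{x\in V(G)}\brk1{\beta_{h,h}(G-x)-\beta_{h+1,h-1}(G-x)}-1,
\]
where $\beta_{p,q}(F)$ counts ordered pairs of complementary stable sets of sizes $p,q$ in the forest~$F$. Equivalently, $\beta_{p,q}(F)$ counts choices of one of the two colourings of each component such that the total colour-$0$ count is~$p$.

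\emph{Main obstacle.} The remaining work, which I expect to be the main difficulty, is evaluating this sum over~$x$. Deleting~$x$ leaves either a smaller spider plus a path (if $x$ lies on a leg), or three paths (if $x$ is the center). Each component~$F_i$ has an imbalance $d_i\in\{-1,0,1,\dots\}$, the difference of its colour-class sizes. Whether the two totals $(h,h)$ and $(h\pm1,h\mp1)$ are achievable, and in how many ways, depends only on signed sums of the~$d_i$. These are determined by the parities of the position of~$x$ on its leg and of the leg's remaining length. I would tabulate the contribution for each of the following cases:
\begin{itemize}
\item $x$ the center;
\item $x$ on leg~$a$ at an even or odd distance from the center;
\item $x$ on leg~$b$ or~$c$, again by distance parity.
\end{itemize}
Summing the contributions along each leg should give roughly $b$-, $c$- and $a$-proportional contributions with the signs $+5/2$, $+5/2$ and $-1/2$, producing $\ceil{(5b+5c+2-a)/2}$ after the parity correction. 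The final assertion of the statement is then immediate: $\delta_a(b,c)<0$ exactly when $a>5b+5c+2$.
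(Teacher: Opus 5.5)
\textbf{Verdict.} Your route is the paper's route, but the proof is not complete: the one substantive step, the count over deleted vertices, is predicted rather than carried out. There is also a false (though harmless) claim about the tabloids of shape $(h+1)h$.

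\textbf{What matches the paper, and the error at shape $(h+1)h$.} You apply \cref{thm:s-coeff.XG} and then reduce $\widetilde a_{hh1}$ and $\widetilde a_{(h+1)(h-1)1}$ to counting ordered two-colourings of $G-x$. Your $\beta_{h,h}(G-x)$ and $\beta_{h+1,h-1}(G-x)$ are the paper's $N_0(v)$ and $N_2(v)$. Your four tabloids of shape $hh1$ are correct; the paper simply drops the type-$(h+2,h-1)$ term because $\alpha(G)=h+1$. Your paragraph on shape $(h+1)h$, however, is wrong as written. The border strip $(h+1,h)/(h-1)$ consists of row~$2$ together with the cells $(1,h)$ and $(1,h+1)$. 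It contains $(2,1)$, is connected, and has no $2\times2$ block, since the two rows share only column~$h$. Together with the cells $(1,1),\dots,(1,h-1)$ it gives a special tabloid of type $(h+2,h-1)$ and height~$1$. So in fact
\[
\Coeff{s_{(h+1)h}}{X_G}=\widetilde a_{(h+1)h}(G)-\widetilde a_{(h+2)(h-1)}(G).
\]
You already need the uniqueness of the bipartition at this point to kill the second term. The value~$1$ survives, but not for the reason you gave. A safe enumeration: the ribbon through $(1,1)$ is removed last, so it is the whole remaining diagram, which (as $h\ge2$) is a single row $(m)$. The ribbon condition on what is left then forces $m\in\{h-1,h+1\}$ for shape $(h+1,h)$ and $m\in\{h-1,h\}$ for shape $(h,h,1)$.

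\textbf{The missing computation.} You need to actually evaluate $\sum_x\bigl(\beta_{h,h}(G-x)-\beta_{h+1,h-1}(G-x)\bigr)$; writing that it ``should give roughly'' the right slopes does not prove the formula. It does close, exactly as in \cref{tab:critical-count-table}. Let $d_i\ge0$ be the class-size differences of the components of $G-x$, and put $P=\prod_i(t^{d_i}+t^{-d_i})$. Then $\beta_{h,h}=[t^0]P$ and $\beta_{h+1,h-1}=[t^2]P$. The pairs $(\beta_{h,h},\beta_{h+1,h-1})$ are:
\begin{itemize}
\item $(4,2)$ at the center;
\item on the $a$-leg: $(2,1)$ at each of the $a/2$ odd distances, $(0,2)$ at each of the $a/2-1$ even non-leaf distances, and $(0,1)$ at the leaf;
\item on an odd leg of length $\ell\in\{b,c\}$: $(2,1)$ at each of the $(\ell-1)/2$ even distances, $(4,0)$ at each of the $(\ell-1)/2$ odd non-leaf distances, and $(2,0)$ at the leaf.
\end{itemize}
The leaves must be treated as separate cases. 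There is no tail path there, and counting an empty component with $d=0$ would double their contribution.

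Summing the differences gives $2$ from the center, $1-a/2$ from the $a$-leg, and $5(\ell-1)/2+2$ from each odd leg. Subtracting $\widetilde a_{(h+1)h}(G)=1$ gives exactly $(5b+5c+2-a)/2$. This is already an integer because $a$ and $b+c$ are even, so there is no parity correction and the ceiling in $\delta_a$ does nothing.
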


\begin{proof}
Let $G=S(a,b,c)$.
Suppose that $a=2x$,
$b=2y+1$,
and $c=2z+1$,
where $x\in \mathbb Z_{\ge 1}$ and $y,z\in \mathbb Z_{\ge 0}$.
Any stable set containing the center of $G$ has size at most $1+x+y+z=h$,
whereas a stable set avoiding the center has size at most
\[
x+(y+1)+(z+1)
=h+1.
\]
Hence the independence number of~$G$ is $\alpha(G)=h+1$.

Let~$A\sqcup B$ be the bipartition of $G$ chosen
so that the center belongs to~$A$.
Then $\abs{A}=h$ and $\abs{B}=h+1$.
By \cref{thm:s-coeff.XG}, $\Coeff{s_{(h+1)h}}{X_G}
=\widetilde a_{(h+1)h}=1$,
and
\begin{equation}\label{pf:s.rr1.Sebc}
\Coeff{s_{hh1}}{X_G}
=\widetilde a_{hh1}
-\widetilde a_{(h+1)(h-1)1}
-\widetilde a_{(h+1)h}.
\end{equation}

It remains to calculate $\widetilde a_{hh1}$ and $\widetilde a_{(h+1)(h-1)1}$.
For any vertex~$v$ and integer~$i$,
let~$N_i(v)$ count the stable compositions~$(B_1,B_2)$ of~$G-v$
with $\abs{B_1}-\abs{B_2}=i$.
Then
\[
\widetilde a_{hh1}
=\sum_{v\in V(G)}N_0(v)
\quad\text{and}\quad
\widetilde a_{(h+1)(h-1)1}
=\sum_{v\in V(G)}N_2(v).
\]
Let~$H_1,\dots,H_k$ be the components of~$G-v$.
Then each $H_i$ is bipartite.
Let~$d_j$ be the absolute difference
between the sizes of the two bipartition classes of~$H_j$.
Then
\[
N_i(v)
=\Coeff{t^i}{P_v(t)},
\quad\text{where}\quad
P_v(t)
=\prod_{j=1}^k(t^{d_j}+t^{-d_j}).
\]
Classifying the vertices of $G$ by position
gives \cref{tab:critical-count-table}.
All distances are measured from the center.
The last three rows apply separately with $u=y$ and $u=z$.

\begin{table}[htbp]
\centering
\caption{Stable-composition counts $N_i(v)=\Coeff{t^i}{P_v(t)}$ by the position of~$v$.}
\label{tab:critical-count-table}
\renewcommand{\arraystretch}{1.1}
\begin{tabular}{@{}c@{\quad}>{$}c<{$}@{\quad}>{$}c<{$}@{\quad}>{$}c<{$}@{\quad}>{$}c<{$}@{}}
\toprule
Position of~$v$&\multicolumn{1}{c}{Number of choices of~$v$}&P_v(t)&N_2(v)&N_0(v)
\\
\midrule
Center&1&2(t+t^{-1})^2&2&4
\\
$2x$-leg,
odd distance&x&(t+t^{-1})^2&1&2
\\
$2x$-leg,
even distance,
non-leaf&x-1&2(t^2+t^{-2})&2&0
\\
Leaf of the~$2x$-leg&1&t^2+t^{-2}&1&0
\\
$(2u+1)$-leg,
even distance&u&(t+t^{-1})^2&1&2
\\
$(2u+1)$-leg,
odd distance,
non-leaf&u&4&0&4
\\
Leaf of a~$(2u+1)$-leg&1&2&0&2
\\
\bottomrule
\end{tabular}
\end{table}

Each factor~$2$ appearing in the column labeled by~$P_v(t)$
arises from the corresponding fact~$d_j=0$,
which yields the Laurent polynomial $t^0+t^{-0}=2$.
Summing the contributions in the last two columns gives
\begin{align*}
\widetilde a_{hh1}
&=4+2x+2(y+z)+4(y+z)+2\cdot 2
=2(x+3y+3z+4),
\quad\text{and}
\\
\widetilde a_{(h+1)(h-1)1}
&=2+x+2(x-1)+1+(y+z)
=3x+y+z+1.
\end{align*}
Substituting them into \cref{pf:s.rr1.Sebc},
we obtain
\[
\Coeff{s_{hh1}}{X_G}
=2(x+3y+3z+4)-(3x+y+z+1)-1
=(5b+5c+2-a)/2.
\]
The consequence follows since $\Coeff{s_{hh1}}{X_G}<0$ when $a>5b+5c+2$.
\end{proof}

\subsection{Schur bounds for path differences}\label{sec:path-bridge}

Define the symmetric function $\Delta_n$ by $\Delta_0=0$,
$\Delta_1=e_1$, and for $n\ge 2$,
\[
\Delta_n
=e_1 X_{n-1}-e_2 X_{n-2}.
\]
Then $\Delta_2=s_2$ and $\Delta_3=e_{21}$.
The connection with spiders follows from \cref{prop:3spider}:
\[
X_{S(n-3,\,1,\,1)}
=\bigl(X_n-e_2 X_{n-2}\bigr)+\Delta_n
\qquad(n\ge 4).
\]
The first summand is~$e$-positive by \cref{thm:TW+}.
Thus Schur bounds for~$\Delta_n$ provide estimates
for the initial family~$S(a,1,1)$ used in the later inductions.
The goal of this subsection is to establish \cref{thm:path-bridge}.

\begin{theorem}
\label{thm:path-bridge}
For every integer $h\ge 2$,
we have
\[
\Coeff{s_{hh}}{\Delta_{2h}}
=-1
\quad\text{and}\quad
\Coeff{s_{hh1}}{\Delta_{2h+1}}
=3-h.
\]
Every other Schur coefficient of $\Delta_{2h}$ and $\Delta_{2h+1}$ is nonnegative.
Moreover,
\[
\Coeff{s_{(h+1)(h-1)}}{\Delta_{2h}}
=1
\quad\text{and}\quad
\Coeff{s_{(h+1)h}}{\Delta_{2h+1}}
=1.
\]
\end{theorem}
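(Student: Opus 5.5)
The plan is to derive a linear recurrence for $\Delta_n$ in which only one term is negative, and then argue by strong induction on~$n$. I would also compute the few critical coefficients separately, by a direct count. First, substitute the path recurrence $X_m=e_m+\sum_{i=1}^m(i-1)e_iX_{m-i}$ (derived after \cref{thm:TW+}) into both $e_1X_{n-1}$ and $e_2X_{n-2}$, and pair the terms carrying the same factor~$e_i$. The term $i=n-1$ pairs with $\Delta_1=e_1$, and the term $i=n-2$ pairs with $\Delta_2=e_1^2-e_2=s_2$. This should give, for $n\ge 4$,
\[
\Delta_n=\bigl(e_1e_{n-1}-e_2e_{n-2}\bigr)+\sum_{i=2}^{n-1}(i-1)\,e_i\,\Delta_{n-i}.
\]
By \cref{prop:Pieri}, $e_1e_{n-1}=s_{1^n}+s_{21^{n-2}}$ and $e_2e_{n-2}=s_{1^n}+s_{21^{n-2}}+s_{221^{n-4}}$. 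Hence
\[
\Delta_n=-s_{221^{n-4}}+\sum_{i=2}^{n-1}(i-1)\,e_i\,\Delta_{n-i}.
\]
So the only intrinsic negative term is $-s_{221^{n-4}}$. The other negative contributions come from $e_i$ times the single negative term of $\Delta_{n-i}$, which is given by the induction hypothesis.

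Second, I would compute the critical coefficients exactly, independently of positivity. Write $[s_\lambda](e_kF)=\sum_\mu[s_\mu]F$, summed over $\mu$ with $\lambda/\mu$ a vertical $k$-strip. Then for $\lambda\in\{hh,\,(h+1)(h-1),\,hh1,\,(h+1)h\}$, the coefficient $[s_\lambda]\Delta_n$ becomes a short combination of coefficients of $X_{n-1}$ and $X_{n-2}$ at shapes with at most three rows and nearly equal row lengths. Each of these I would evaluate with \cref{thm:s-coeff.XG}. For paths, the relevant special ribbon tabloids are few, and the stable compositions of a path into two or three nearly balanced blocks can be counted by hand. This is essentially the same bipartition-imbalance bookkeeping as in the proof of \cref{prop:critical}, where removing a vertex splits the path into at most two subpaths. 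This should give $-1$ and $1$ for $n=2h$, and $3-h$ and $1$ for $n=2h+1$. The small cases $h=2,3$ I would check directly from $\Delta_2=s_2$, $\Delta_3=e_{21}$ and the recurrence.

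Third, the nonnegativity of all other coefficients I would prove by strong induction on~$n$, using the recurrence with the full statement as hypothesis. The hypothesis says that $\Delta_m$ equals a Schur-positive part, minus $s_{kk}$ if $m=2k$, or minus $(k-3)s_{kk1}$ if $m=2k+1$. In addition, the hypothesis records the guaranteed positive coefficients at $(k+1)(k-1)$ and $(k+1)k$. The negative contributions to $\Delta_n$ are therefore $-s_{221^{n-4}}$ and the terms $(i-1)e_i\,s_{kk}$ or $(i-1)(k-3)e_i\,s_{kk1}$. By \cref{prop:Pieri}, $e_is_{kk}$ adds a vertical strip to a two-row shape. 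The idea is to match each shape $\mu$ in this product with a shape in $(i-1)e_i\,s_{(k+1)(k-1)}$ or in a neighbouring product $(i'-1)e_{i'}\Delta_{n-i'}$. Moving one cell from row~$2$ to row~$1$ of the base shape should give an injection, in the spirit of \cref{lem:forced-cell-pieri}. The exceptions are the shapes $\mu=hh$ or $hh1$, where no compensation exists, and these are exactly the critical coefficients computed in the second step. The term $-s_{221^{n-4}}$ should be absorbed by $e_{n-2}\Delta_2=e_{n-2}s_2$, which contains $s_{221^{n-4}}$ with a factor $n-3\ge1$.

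I expect the main obstacle to be this third step, and in particular the three-row negative terms $-(k-3)e_is_{kk1}$, which have large multiplicity. The injection from $e_is_{(k+1)(k-1)}$ alone provides coefficient only $i-1$, not $(i-1)(k-3)$. To cover them, I would first try pairing with $e_{i+1}\Delta_{n-i-1}$, whose base $\Delta_{2k}$ has a positive coefficient at $s_{(k+1)(k-1)1}$ or $s_{k(k-1)1}$-type shapes, and I would try to prove these positive coefficients grow linearly in~$k$. If that fails, I would strengthen the induction hypothesis with explicit linear lower bounds for a few near-critical three-row coefficients, proving them simultaneously.
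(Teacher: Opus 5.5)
\textbf{There is a genuine gap in your third step.} Your recurrence in the first step is correct and matches the paper's. The third step, however, cannot close under the hypothesis you state, and the fallback you mention at the end is where all the work lies.

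Take $\lambda=(a,a,2,1^b)$ with $a\ge 5$.
\begin{itemize}
\item The negative seeds $-(a-3)s_{aa1}$ of $\Delta_{2a+1}$ and $-(a-4)s_{(a-1)^21}$ of $\Delta_{2a-1}$ reach $\lambda$ by vertical strips of sizes $b+1$ and $b+3$. In your recurrence they therefore contribute $-(a-3)b-(a-4)(b+2)$, which is roughly $-2ab$.
\item None of the coefficients your hypothesis controls (those at $s_{kk}$, $s_{kk1}$, $s_{(k+1)(k-1)}$, $s_{(k+1)k}$) reaches $\lambda$ with a positive sign. Knowing only that the remaining coefficients are nonnegative gives nothing.
\item What pays is $[s_{a(a-1)1}]\Delta_{2a}=2a-1$, contributing $(2a-1)(b+1)$. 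The boundary seeds then sum to $6b+7$, and the $ab$ terms cancel exactly.
\end{itemize}
So you need the exact values of the three-row coefficients with third part $1$, not loose linear lower bounds. These values must also be established independently of the positivity induction.

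This is the missing idea. The paper computes the whole boundary part $Q_N$ in closed form, i.e.\ all coefficients at shapes $\lambda$ with $\lambda_3+\lambda_4+\cdots\le 1$. It does so by specializing to three variables and reducing modulo $(e_3^{[3]})^2$, which annihilates exactly the Schur polynomials with third part at least $2$. This yields $(2h-1)s_{h(h-1)1}$ and $(h-2)s_{(h+1)(h-2)1}$ in $\Delta_{2h}$, and $3(h-1)s_{(h+1)(h-1)1}$ in $\Delta_{2h+1}$. The paper then sets $H_n=\Delta_n-Q_n$ and derives a recurrence for $H_n$ with a forcing term $G_n$. It proves $G_n$ Schur positive by tabulating the only four families of shapes where a negative seed can land: $(a,a,1^b)$, $(a+1,a,1^b)$, $(a,a,2,1^b)$ and $(a+1,a,2,1^b)$. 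Your second step, which counts ribbon tabloids for four shapes only, does not supply these data.

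\textbf{The local compensation mechanisms you sketch are also incorrect.}
\begin{itemize}
\item Schur positivity has to be checked shape by shape, and moving a cell from row $2$ to row $1$ of the base shape changes the resulting shape. In fact $e_1(s_{31}-s_{22})=s_{41}+s_{311}-s_{221}$, so $e_is_{(k+1)(k-1)}$ does not dominate $e_is_{kk}$.
\item $e_{n-2}s_2$ contains no $s_{2^21^{n-4}}$, because $(2,2,1^{n-4})/(2)$ has two cells in row $2$ and so is not a vertical strip. For $n\ge 5$ the term $-s_{2^21^{n-4}}$ is instead offset by $(n-4)e_{n-3}\Delta_3$ (among others), which contributes $2(n-4)$. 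For $n=4$ it is the critical coefficient $-s_{22}$ itself, and for $n=5$ it sits at the critical shape $(2,2,1)$.
\end{itemize}
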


Define the \emph{boundary part} of~$\Delta_N$ by
\[
Q_N
=\sum_{\lambda\vdash N, \
\operatorname{tail}_2(\lambda)\le 1}
\brk1{\Coeff{s_\lambda}{\Delta_N}}s_\lambda,
\quad\text{where $\operatorname{tail}_2(\lambda)
=\sum_{j\ge 3}\lambda_j$.}
\]
The proof has three stages.
First,
\cref{lem:path-boundary} determines all Schur coefficients indexed
by partitions with tail at most one.
These include the exceptional coefficients in \cref{thm:path-bridge}.
Next,
we subtract this boundary part and use \cref{prop:gf.path}
to derive a recurrence for the remainder.
\Cref{lem:path-forcing-positive}
proves that the new terms in this recurrence are Schur positive.
Finally,
the recurrence expresses each remainder as a sum of products
of these Schur-positive terms with~$e$-positive symmetric functions,
which proves that all remaining coefficients are nonnegative.

For~$F\in \mathrm{Sym}$ and $d\in \mathbb Z_{\ge 1}$,
write $F^{[d]}=F(x_1,\dots,x_d,0,0,\dots)$.

\begin{lemma}\label{lem:3var}
We have the following identities for symmetric functions in $3$ variables.
\begin{enumerate}
\item
We have $s_{abc}^{[3]}
=(e_3^{[3]})^c s_{(a-c)(b-c)}^{[3]}$
for any integers $a\ge b\ge c\ge 0$.
\item
We have $e_1^{[3]} s_{h(h-1)}^{[3]}
=s_{(h+1)(h-1)}^{[3]}
+e_2^{[3]} s_{(h-1)^2}^{[3]}$
for any integer $h\ge 1$.
\item
The generating function of $s_{hh}^{[3]}$ is
\[
\sum_{h\ge 0}s_{hh}^{[3]}t^h
=\frac1{1-e_2^{[3]}t+e_{31}^{[3]}t^2-e_{33}^{[3]}t^3}.
\]
\end{enumerate}
\end{lemma}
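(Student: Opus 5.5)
We prove the three identities directly from the structure of symmetric functions in three variables, using the bialternant (Jacobi--Trudi-free) description of Schur polynomials. Throughout we work in $\Lambda_3=\mathbb Q[x_1,x_2,x_3]^{S_3}$, where $s_\lambda^{[3]}=a_{\lambda+\rho}/a_\rho$ with $\rho=(2,1,0)$ and $a_\alpha=\det(x_i^{\alpha_j})$. In this ring $e_1^{[3]},e_2^{[3]},e_3^{[3]}$ are algebraically independent generators, and $s_\lambda^{[3]}=0$ whenever $\ell(\lambda)>3$.

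\emph{Item (1).} Adding $c$ to every exponent in $a_{(a+2,b+1,c)}$ multiplies each monomial of the determinant by $(x_1x_2x_3)^c$. Hence
\[
a_{(a-c,b-c,0)+\rho}\cdot (x_1x_2x_3)^c=a_{(a,b,c)+\rho}.
\]
Dividing by $a_\rho$ gives $s_{abc}^{[3]}=(e_3^{[3]})^c s_{(a-c)(b-c)}^{[3]}$.

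\emph{Item (2).} We expand $e_1 s_{h(h-1)}$ by \cref{prop:Pieri} with $k=1$, obtaining
\[
e_1 s_{h(h-1)}=s_{(h+1)(h-1)}+s_{hh}+s_{h(h-1)1}.
\]
We then expand $e_2 s_{(h-1)^2}$ by \cref{prop:Pieri} with $k=2$. The vertical $2$-strips added to $(h-1,h-1)$ can occupy rows $\{1,3\}$, $\{3,4\}$, or $\{1,2\}$, where the last choice must be read as placing cells in column $h$. Checking which of these yield partitions gives
\[
e_2 s_{(h-1)^2}=s_{hh}+s_{h(h-1)1}+s_{(h-1)^2 1^2}.
\]
Here the row pair $\{1,2\}$ gives $hh$, the pair $\{1,3\}$ gives $h(h-1)1$, and the pair $\{3,4\}$ gives $(h-1)^21^2$; the pair $\{2,3\}$ is not allowed because it would not produce a partition. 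Restricting to three variables kills the length-$4$ term $s_{(h-1)^21^2}$. Subtracting the two expansions yields the identity. For $h=1$ we read $s_{(0)^2}=1$, and the identity $e_1s_1=s_2+e_2$ is checked directly.

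\emph{Item (3).} We view $s_{hh}^{[3]}$ through the Jacobi--Trudi dual formula $s_{hh}=\det\bigl(e_{\lambda'_i-i+j}\bigr)$, where $\lambda'=2^h$. This is an $h\times h$ tridiagonal-type determinant in $e_1,e_2,e_3$, since $e_k^{[3]}=0$ for $k>3$. With $\lambda'_i=2$ for all $i$, the entries are $e_{2-i+j}$: the diagonal holds $e_2$, the superdiagonal $e_3$, the subdiagonal $e_1$, the second subdiagonal $e_0=1$, and all other entries vanish. This is a lower Hessenberg matrix, so expanding along the last column gives the linear recurrence
\[
D_h=e_2D_{h-1}-e_1e_3D_{h-2}+e_3^2D_{h-3}.
\]
The sign pattern of the Hessenberg expansion, with coefficients $e_2$, $e_3\cdot e_1$, $e_3^2\cdot 1$ and alternating signs, produces exactly this recurrence. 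The initial values are $D_0=1$, $D_1=e_2$, $D_2=e_2^2-e_1e_3$; one checks $s_{22}^{[3]}=e_2^2-e_1e_3$. The recurrence is equivalent to the stated rational generating function with denominator $1-e_2t+e_{31}t^2-e_{33}t^3$.

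The main care point is this Hessenberg expansion: the signs and the placement of $e_3$ versus $e_1$ must be checked against small cases. An alternative is to verify the recurrence directly from the bialternant by writing $s_{hh}^{[3]}$ as $\sum_i x_i^{h+2}\cdots/a_\rho$, which gives a sum of geometric-type terms in the ratios $x_ix_j$. Under that reading, the denominator is $\prod_{i<j}(1-x_ix_jt)$, whose expansion is precisely $1-e_2t+e_1e_3t^2-e_3^2t^3$, since $x_1x_2+x_1x_3+x_2x_3=e_2$, the sum of pairwise products of these three terms is $e_1e_3$, and their product is $e_3^2$. This second route is probably the cleanest, so I would present it: it identifies $\sum_h s_{hh}t^h$ with $\prod_{i<j}(1-x_ix_jt)^{-1}$. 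To justify that identification, note that $s_{hh}^{[3]}=s_{hh0}^{[3]}$, and by the three-variable duality $s_\lambda^{[3]}=e_3^{\lambda_1}s_{\lambda^\vee}(x^{-1})$, the polynomial $s_{hh0}^{[3]}$ equals $e_3^{h}\,h_h(x_1^{-1},x_2^{-1},x_3^{-1})$. Writing $e_3x_i^{-1}=x_jx_k$ turns this into $h_h(x_2x_3,x_1x_3,x_1x_2)$, whose generating function is the product above.
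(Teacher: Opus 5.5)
Your proof is correct. Item (2) is the same Pieri-rule argument the paper gives. For items (1) and (3) you take different routes.

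\textbf{Item (1).} The paper uses Pieri's rule. In three variables, $e_3 s_\mu^{[3]}=s_{\mu+(1,1,1)}^{[3]}$, because the only vertical $3$-strip leaving at most three rows adds one cell to each row. Your bialternant factorization $a_{(a,b,c)+\rho}=(x_1x_2x_3)^c\,a_{(a-c,b-c,0)+\rho}$ is equally short and self-contained. One wording slip: the $c$ is added to the exponents of $a_{(a-c+2,\,b-c+1,\,0)}$, not of $a_{(a+2,b+1,c)}$.

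\textbf{Item (3).} The paper reads the identity directly off semistandard tableaux. Each column of a tableau of shape $(h,h)$ with entries in $\{1,2,3\}$ is one of $(1,2),(1,3),(2,3)$. Under the row condition these three column types form a chain. Hence $s_{hh}^{[3]}=h_h(x_1x_2,x_1x_3,x_2x_3)$, with no outside input.

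You reach the same identity through the $GL_3$ duality $s_\lambda^{[3]}=e_3^{\lambda_1}s_{\lambda^\vee}(x^{-1})$. This is standard, but you quote it rather than prove it. It follows from the bialternant exactly as your item (1) does: invert the variables, clear denominators, and reverse the column order, which changes the numerator and denominator by the same sign. Adding that line would make your argument uniform.

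Your preliminary remark about writing $s_{hh}^{[3]}$ as $\sum_i x_i^{h+2}\cdots/a_\rho$ is too vague to count as a proof on its own. The duality step that follows it is complete, though.

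Your dual Jacobi--Trudi route is also correct. The recurrence $D_h=e_2D_{h-1}-e_1e_3D_{h-2}+e_3^2D_{h-3}$ has the right coefficients and signs; for instance, at $h=3$ both sides equal $e_2^3-2e_1e_2e_3+e_3^2$. It comes out most directly by expanding along the last row, though repeated expansion along the last column gives the same result.

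\textbf{Comparison.} The determinant route directly produces the linear recurrence that the later modulo-$C^2$ coefficient extractions actually use, at the cost of sign bookkeeping. The tableau argument is the most elementary, and it explains combinatorially why the denominator factors as $\prod_{i<j}(1-x_ix_jt)$.
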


\begin{proof}
The first desired formula follows directly from \cref{prop:Pieri},
since the number of rows in any resulting shape
is at most the number of $3$ of variables.
For $h=1$,
the second identity is
\[
e_1^{[3]}s_1^{[3]}
=s_2^{[3]}+e_2^{[3]}.
\]
For $h\ge 2$,
adding one cell to $(h,\,h-1)$ gives~$(h+1,\,h-1)$,
$h^2$ or $(h,\,h-1,\,1)$,
the latter two of which can be obtained alternatively
by adding a vertical~$2$-strip to~$(h-1)^2$;
this proves the second formula.

By the combinatorial interpretation,
$s_{hh}^{[3]}$ is the weight sum
of semistandard Young tableaux of shape~$(h,h)$ with entries~$1$,
$2$,
or~$3$.
Each column of such a tableau,
in French notation from bottom to top,
contributes a factor~$y_1=x_1x_2$,
$y_2=x_1x_3$,
or~$y_3=x_2x_3$.
From left to right,
the columns are weakly increasing in the order $y_1<y_2<y_3$.
By the combinatorial interpretation
of the complete homogeneous generating function,
\[
\sum_{h\ge 0}s_{hh}^{[3]}t^h
=\prod_{i=1}^3\frac1{1-y_it}
=\frac1{1-c_1t+c_2t^2-c_3t^3},
\]
where $c_1=e_1(y_1,y_2,y_3)=e_2^{[3]}$,
$c_2=e_2(y_1,y_2,y_3)=e_{31}^{[3]}$,
and $c_3=e_3(y_1,y_2,y_3)=e_{33}^{[3]}$.
\end{proof}

Combining these identities with two-variable specialization,
we obtain an explicit formula for the boundary part~$Q_N$.

\begin{lemma}\label{lem:path-boundary}
For $N\le 3$, we have $Q_N=\Delta_N$.
For~$h\ge 2$,
\begin{align*}
Q_{2h}
&=s_{(h+1)(h-1)}-s_{hh}
+(h-2)s_{(h+1)(h-2)1}
+(2h-1)s_{h(h-1)1},
\quad\text{and}
\\
Q_{2h+1}
&=s_{(h+1)h}+3(h-1) s_{(h+1)(h-1)1}
-(h-3) s_{hh1},
\end{align*}
where the term $(h-2)s_{(h+1)(h-2)1}$ is omitted when $h=2$.
\end{lemma}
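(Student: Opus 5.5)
The plan is to compute the boundary part $Q_N$ by specializing to three variables. Partitions with $\operatorname{tail}_2(\lambda)\le 1$ are exactly those with at most three rows and third row at most $1$. Schur functions with more than three rows vanish in three variables. For those with at most three rows, \cref{lem:3var}(1) gives
\[
s^{[3]}_{\lambda_1\lambda_2\lambda_3}=(e_3^{[3]})^{\lambda_3}s^{[3]}_{(\lambda_1-\lambda_3)(\lambda_2-\lambda_3)}.
\]
So $\Delta_N^{[3]}$ is an expansion in the linearly independent family $\{(e_3^{[3]})^k s^{[3]}_{\mu}\}$, with $\mu$ having at most two rows. The coefficients of the terms with $k=0$ and $k=1$ are exactly the coefficients of $Q_N$.

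Rather than expand $X_n^{[3]}$ fully, I would reduce modulo $(e_3^{[3]})^2$, working in the ring $\mathrm{Sym}^{[3]}/(e_3^2)$. The cases $N\le 3$ are immediate from $\Delta_0,\dots,\Delta_3$. For general $N$:

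\begin{enumerate}
\item Compute $X_n^{[3]}$ modulo $e_3^2$ from \cref{prop:gf.path}. In three variables, $D(z)=1-e_2z^2-2e_3z^3$, so
\[
\sum_n X_n^{[3]}z^n=\frac{1+e_1z+e_2z^2+e_3z^3}{1-e_2z^2-2e_3z^3}.
\]
Expanding to first order in $e_3$ gives $X_n^{[3]}$ as a polynomial in $e_1,e_2$, plus $e_3$ times another such polynomial. Explicitly, with $m=\lfloor n/2\rfloor$, the $e_3$-free part is $e_2^{m}$ or $e_1e_2^{m}$ according to the parity of $n$. The $e_3$-linear part is a sum of terms of the shape $(\text{const})\,e_1^{\epsilon}e_2^{j}e_3$.
\item Form $\Delta_N^{[3]}=e_1X_{N-1}^{[3]}-e_2X_{N-2}^{[3]}$ modulo $e_3^2$. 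The $e_3$-free part should collapse to something like $e_1^2e_2^{h-1}-e_2^{h}$ for $N=2h$, and $e_1e_2^{h}-e_1e_2^{h}$ plus lower terms for odd $N$. Each monomial $e_1^{\epsilon}e_2^{j}e_3^k$ is then converted to Schur functions modulo $e_3^2$.
\item Do the conversion by the Pieri rule in three variables. We have $e_2^{j}\equiv s_{jj}$ modulo $e_3$, and more precisely \cref{lem:3var}(3) gives $s_{jj}^{[3]}=e_2^j-(j-1)e_2^{j-2}e_{31}+O(e_3^2)$, which can be inverted to express $e_2^j$. Products $e_1s_{jj}$ and $e_1^2s_{jj}$ expand by \cref{lem:3var}(2) and the Pieri rule, where any factor $e_3$ turns $s_{ab}$ into $s_{ab1}$.
\item Collect coefficients of $s_{(h+1)(h-1)}$, $s_{hh}$, $s_{(h+1)(h-2)1}$, $s_{h(h-1)1}$ for even $N$, and the analogous shapes for odd $N$. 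Confirm that all other two-row and tail-one coefficients vanish. Finally, lift back to $\mathrm{Sym}$ using the fact that specialization to three variables is injective on the span of $s_\lambda$ with $\ell(\lambda)\le 3$.
\end{enumerate}

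I expect the main obstacle to be the bookkeeping of the $e_3$-linear part. There are two competing sources of the coefficients $2h-1$, $h-2$, $3(h-1)$ and $-(h-3)$: the explicit $2e_3z^3$ term in $D(z)$, and the $e_3$-correction $-(j-1)e_2^{j-2}e_{31}$ in the relation between $e_2^j$ and $s_{jj}$. To keep this manageable, I would first prove by induction on $n$ closed forms for $X_n^{[3]}$ modulo $e_3^2$ directly in the Schur basis. Using the recurrence $X_n=e_2X_{n-2}+2e_3X_{n-3}+\dots$ modulo $e_3^2$ together with \cref{lem:3var}(2), I would show that $X_{2m}^{[3]}\equiv s_{mm}+\alpha_m s_{(m+1)(m-1)}+\dots$ with explicit linear-in-$m$ coefficients. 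The two leading coefficients for \cref{thm:path-bridge} can be cross-checked against small cases $h=2,3$.
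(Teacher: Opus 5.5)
Your strategy is the same as the paper's: specialize to three variables, work modulo $(e_3^{[3]})^2$ using the generating function of \cref{prop:gf.path} together with \cref{lem:3var}, and lift back using the independence of the $s^{[3]}_\lambda$ with $\ell(\lambda)\le 3$. The plan is sound, but the explicit values you write down are wrong, and the error lands exactly on the coefficients the lemma is about.

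For $m\ge 1$ one has $X_{2m}^{[3]}\equiv 2e_2^m\pmod{e_3}$, not $e_2^m$. Modulo $e_3$ the numerator $1+e_1z+e_2z^2$ contributes both $1\cdot e_2^m$ and $e_2\cdot e_2^{m-1}$. Already $X_2=2e_2$; equivalently $X_{2m}^{[2]}=2(x_1x_2)^m$, since the path has exactly two proper $2$-colorings. Hence the $e_3$-free parts are $(e_1^2-2e_2)e_2^{h-1}$ for $\Delta_{2h}$ and $e_1e_2^{h}$ for $\Delta_{2h+1}$.

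Your versions would lose exactly the terms that matter. The expression $e_1^2e_2^{h-1}-e_2^h$ gives $s_{(h+1)(h-1)}$ alone in the even case, dropping the $-s_{hh}$. The cancelling $e_1e_2^h-e_1e_2^h$ drops $s_{(h+1)h}$ in the odd case. For the same reason your ansatz $X_{2m}^{[3]}\equiv s_{mm}+\alpha_m s_{(m+1)(m-1)}+\cdots$ is off: the two-row part of $X_{2m}$ is exactly $2s_{mm}$. Also, multiplying by $e_3$ sends $s^{[3]}_{ab}$ to $s^{[3]}_{(a+1)(b+1)1}$, not to $s_{ab1}$. Following your rule literally would misplace every tail-one index.

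With these corrected the computation goes through, and the paper organizes it to avoid most of the bookkeeping you anticipate. The two-row coefficients come for free from the two-variable specializations, which give $\Delta_{2h}^{[2]}=s^{[2]}_{(h+1)(h-1)}-s^{[2]}_{hh}$ and $\Delta_{2h+1}^{[2]}=s^{[2]}_{(h+1)h}$.

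For the tail-one part there is no need to expand $X_n$ in the Schur basis by induction. Write $A,B,C$ for $e_1^{[3]},e_2^{[3]},e_3^{[3]}$. Then:
\begin{enumerate}
\item expand $\sum_n\Delta_n^{[3]}z^n=(Az-Bz^2)(1+Az+Bz^2+Cz^3)/(1-Bz^2-2Cz^3)$ modulo $C^2$;
\item subtract the relevant two-row Schur polynomials, expanded modulo $C^2$ via \cref{lem:3var};
\item divide by $C$ and set $x_3=0$.
\end{enumerate}
Terms with third part at least $2$ vanish, and $s^{[3]}_{ab1}/C$ restricts to $s^{[2]}_{(a-1)(b-1)}$. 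So the $(a,b,1)$-coefficients are read off from an ordinary two-variable Schur expansion. The case $N=4$ must be checked by hand, since the expansion of $s^{[3]}_{(h+1)(h-1)}$ modulo $C^2$ involves $B^{h-3}$.
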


\begin{proof}
We determine the two-row terms first.
Since a proper coloring of a nonempty path using two colors alternates,
\[
X_{2k}^{[2]}
=2(x_1x_2)^k\quad(k\ge 1)
\quad\text{and}\quad
X_{2k+1}^{[2]}
=(x_1+x_2)(x_1x_2)^k\quad(k\ge 0).
\]
Substituting them into the definition $\Delta_n=e_1 X_{n-1}-e_2 X_{n-2}$,
we obtain
\begin{align*}
\Delta_{2h}^{[2]}
&=(x_1+x_2)^2(x_1x_2)^{h-1}-2(x_1x_2)^h
=(x_1x_2)^{h-1}(h_2^{[2]}-x_1x_2)
=s_{(h+1)(h-1)}^{[2]}-s_{hh}^{[2]},\quad\text{and}
\\
\Delta_{2h+1}^{[2]}
&=2(x_1+x_2)(x_1x_2)^h
-x_1x_2(x_1+x_2)(x_1x_2)^{h-1}
=(x_1+x_2)(x_1x_2)^h
=s_{(h+1)h}^{[2]}.
\end{align*}
Since two-variable Schur polynomials are linearly independent,
these identities determine all two-row coefficients.
It remains to determine the coefficients
indexed by a partition of the form~$(a,b,1)$.

By \cref{prop:gf.path},
\begin{equation}\label{eq:path-difference-series}
\sum_{n\ge 0}\Delta_nz^n
=\Delta_1z+\sum_{n\ge 2}(e_1 X_{n-1}-e_2 X_{n-2})z^n
=(e_1 z-e_2 z^2)\frac{E(z)}{D(z)}.
\end{equation}
Put $A=e_1^{[3]}$,
$B=e_2^{[3]}$,
and $C=e_3^{[3]}$.
Specializing to three variables gives
\begin{equation}\label{eq:path-three-variable-series}
\sum_{n\ge 0}\Delta_n^{[3]}z^n
=\frac{(Az-Bz^2)
(1+Az+Bz^2+Cz^3)}
{1-Bz^2-2Cz^3}.
\end{equation}
For $F,G\in \mathbb Q[A,B,C]$,
we write
\[
F\equiv G\pmod{C^2}
\quad\Longleftrightarrow\quad
F-G
=C^2J
\text{ for some }J\in \mathbb Q[A,B,C].
\]
We use the same notation for formal power series coefficient by coefficient.

By \cref{lem:3var}(1),
the ideal $(C^2)$ in $\mathbb Q[A,B,C]$
is spanned by the three-variable Schur polynomials
whose third part is at least $2$.
Thus reduction modulo $C^2$ preserves the coefficients indexed
by partitions with third part $0$ or $1$,
including those of the form $(a,b,1)$.
Substituting
\begin{align*}
\frac1{1-Bz^2-2Cz^3}
&=\frac1{1-Bz^2}
\cdot\frac1{1-2Cz^3/(1-Bz^2)}
\equiv \frac1{1-Bz^2}
+\frac{2Cz^3}{(1-Bz^2)^2}
\\
&=\sum_{k\ge 0}B^kz^{2k}
+2C\sum_{k\ge 0}(k+1)B^kz^{2k+3}
\pmod{C^2}
\end{align*}
into \cref{eq:path-three-variable-series} and extracting the coefficient of~$z^n$,
we obtain
\begin{align}
\label{eq:path-even-modular}
\Delta_{2h}^{[3]}
&\equiv \bigl(A^2-2B\bigr)B^{h-1}
+(2h-1)AB^{h-2}C
\pmod{C^2},
\quad\text{and}
\\
\Delta_{2h+1}^{[3]}
&\equiv AB^h
+\brk1{2(h-1)A^2-(4h-5)B}B^{h-2}C
\pmod{C^2}.
\label{eq:path-odd-modular}
\end{align}

We must subtract the full two-row Schur functions
before extracting the terms linear in~$C$.
The generating function in \cref{lem:3var}
gives $s_{00}^{[3]}=1$ and~$s_{11}^{[3]}=B$,
and for~$h\ge 2$,
\[
s_{hh}^{[3]}
=[t^h]\frac1{1-Bt+ACt^2-C^2t^3}
\equiv[t^h]\brk3{
\frac1{1-Bt}
-\frac{ACt^2}{(1-Bt)^2}}
=B^h-(h-1)AB^{h-2}C
\pmod{C^2}.
\]
By \cref{lem:3var},
we can express the two-row terms as
\begin{align}
\notag
s_{(h+1)h}^{[3]}
&=As_{hh}^{[3]}-Cs_{(h-1)^2}^{[3]}
\\
\label{pf:s.r+1.r.[3]}
&\equiv
AB^h-\brk1{(h-1)A^2+B}B^{h-2}C
\pmod{C^2},
\quad\text{and}
\\
\notag
s_{(h+1)(h-1)}^{[3]}
&=As_{h(h-1)}^{[3]}-Bs_{(h-1)^2}^{[3]}
\\
\label{pf:s.r+1.r-1.[3]}
&\equiv (A^2-B)B^{h-1}
+\brk1{(h-3)B-(h-2)A^2}AB^{h-3}C
\pmod{C^2}.
\end{align}
The congruences hold for~$h\ge 2$ and~$h\ge 3$,
respectively.

For~$h\ge 3$,
substituting \cref{pf:s.r+1.r-1.[3]} into \cref{eq:path-even-modular} gives
\begin{align*}
\left.\frac{\Delta_{2h}^{[3]}-s_{(h+1)(h-1)}^{[3]}+s_{hh}^{[3]}}
{C}\right|_{x_3=0}
&=(h-2)(x_1+x_2)^3(x_1x_2)^{h-3}
+3(x_1+x_2)(x_1x_2)^{h-2}
\\
&=(h-2) s_{h(h-3)}^{[2]}
+(2h-1) s_{(h-1)(h-2)}^{[2]}.
\end{align*}
After subtracting the complete two-row part,
every remaining three-variable Schur term is $s_{abc}^{[3]}$ with $c\ge 1$.
By \cref{lem:3var}, $s_{abc}^{[3]}=C^c s_{(a-c)(b-c)}^{[3]}$.
After division by $C$ and specialization at $x_3=0$,
the terms with $c\ge 2$ vanish,
whereas $s_{ab1}^{[3]}/C$ becomes $s_{(a-1)(b-1)}^{[2]}$.
Conversely,
we can recover the terms
\[
(h-2)s_{(h+1)(h-2)1}+(2h-1)s_{h(h-1)1}
\]
in~$\Delta_{2h}$ from the formula above.
This proves the desired formula for $Q_{2h}$ with $h\ge 3$.
It is direct to calculate $\Delta_4
=s_{31}-s_{22}+3s_{211}+2s_{1^4}$,
which proves the desired even formula at~$h=2$.

Similarly, for~$h\ge 2$,
substituting \cref{pf:s.r+1.r.[3]} into \cref{eq:path-odd-modular} gives
\begin{align*}
\left.\frac{\Delta_{2h+1}^{[3]}-s_{(h+1)h}^{[3]}}
{C}\right|_{x_3=0}
&=3(h-1)(x_1+x_2)^2(x_1x_2)^{h-2}
-(4h-6)(x_1x_2)^{h-1}
\\
&=3(h-1) s_{h(h-2)}^{[2]}
-(h-3) s_{(h-1)^2}^{[2]}.
\end{align*}
We can recover the terms
\[
3(h-1) s_{(h+1)(h-1)1}-(h-3) s_{hh1}
\]
in $\Delta_{2h+1}$ from the formula above.
This proves the desired formula for $Q_{2h+1}$.
\end{proof}

Put $H_n=\Delta_n-Q_n$.
Then $H_0=H_1=H_2=H_3=0$.
Multiplying \cref{eq:path-difference-series} by~$D(z)$ gives
\[
\left(1-\sum_{i\ge 2}(i-1) e_i z^i\right)
\sum_{m\ge 0}\Delta_mz^m
=(e_1 z-e_2 z^2)\sum_{j\ge 0}e_j z^j.
\]
For~$n\ge 4$,
comparing coefficients of~$z^n$ yields
\[
\Delta_n-\sum_{i=2}^{n}(i-1) e_i\Delta_{n-i}
=e_1 e_{n-1}-e_2 e_{n-2}.
\]
Substituting $\Delta_m=H_m+Q_m$ and isolating~$H_n$ therefore gives,
for $n\ge 4$,
\begin{equation}\label{eq:path-remainder-recurrence}
H_n
=G_n+\sum_{i=2}^{n-4}(i-1) e_i H_{n-i},
\end{equation}
where
\begin{equation}\creflabel[def]{def:Gn}
G_n
=e_1 e_{n-1}-e_2 e_{n-2}-Q_n
+\sum_{i=2}^{n-1}(i-1) e_i Q_{n-i}.
\end{equation}

\begin{lemma}\label{lem:path-forcing-positive}
For every~$n\ge 4$, write $G_n=W_n+V_n$,
where~$W_n$ is the part of the Schur expansion of~$G_n$
consisting of the terms indexed by partitions
with largest part at most~$2$.
Then $W_4=2e_4$, $W_5=7s_{21^3}+4e_5$,
and for $n\ge 6$,
\[
W_n
=2(n-3) e_n
+7(n-4) s_{21^{n-2}}
+5(n-5)\brk1{s_{2^21^{n-4}}+s_{2^31^{n-6}}}.
\]
Moreover, $V_n\spos$ and $G_n\spos$.
\end{lemma}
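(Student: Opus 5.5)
The plan is to compute $G_n$ directly from \cref{def:Gn}. Every summand is explicit: by \cref{lem:path-boundary}, each $Q_m$ is a short combination of Schur functions with at most three rows and third part at most $1$, and each product $e_iQ_{n-i}$ expands by \cref{prop:Pieri}.

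\textbf{Step 1: the head term.} For $n\ge 4$, \cref{prop:Pieri} gives
\[
e_1e_{n-1}-e_2e_{n-2}=\bigl(s_{21^{n-2}}+e_n\bigr)-\bigl(s_{2^21^{n-4}}+s_{21^{n-2}}+e_n\bigr)=-s_{2^21^{n-4}}.
\]
So the head contributes a single negative term to $W_n$, and this term has to be absorbed by the sum.

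\textbf{Step 2: computing $W_n$.} Adding a vertical strip never shrinks the first row. Hence $e_iQ_{n-i}$ contributes to $W_n$ only through the terms of $Q_{n-i}$ whose first part is at most $2$. These come only from
\[
Q_1=e_1,\quad Q_2=s_2,\quad Q_3=s_{21}+s_{1^3},\quad Q_4=s_{31}-s_{22}+3s_{211},\quad Q_5=s_{32}+3s_{311}+s_{221},
\]
together with $Q_0$ if $i=n$ is included. Also, $-Q_n$ contributes nothing to $W_n$ once $n\ge 6$.

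I would list, for each of these terms $s_\mu$ with $\mu_1\le 2$, the shapes $\nu$ with $\nu_1\le 2$ for which $\nu/\mu$ is a vertical $(n-|\mu|)$-strip. Only $e_n$, $s_{21^{n-2}}$, $s_{2^21^{n-4}}$ and $s_{2^31^{n-6}}$ can occur. I would then weight each occurrence by $(i-1)=n-|\mu|-1$ and sum. This is linear bookkeeping in $n$, and it should yield the stated coefficients $2(n-3)$, $7(n-4)$, $5(n-5)$ and $5(n-5)$, once the $-s_{2^21^{n-4}}$ from Step~1 is included. For $n=4,5$ I would redo the same computation by hand, also keeping $-Q_4$ and $-Q_5$, to get $W_4=2e_4$ and $W_5=7s_{21^3}+4e_5$. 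In particular $W_n\spos$.

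\textbf{Step 3: $V_n\spos$.} The negative contributions to $V_n$ have three sources:
\begin{enumerate}
\item $-Q_n$, whose positive terms become negative, of size $O(n)$;
\item $-(i-1)e_is_{kk}$ coming from $Q_{2k}$ with $n-i=2k$;
\item $-(i-1)(k-3)e_is_{kk1}$ coming from $Q_{2k+1}$ with $k\ge 4$.
\end{enumerate}
I would dominate each of them by a positive piece of a neighbouring summand, using Pieri comparisons of the same kind as \cref{lem:forced-cell-pieri}.

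For source (2), compare with $(i-1)e_is_{(k+1)(k-1)}$ from the same $Q_{2k}$. The Pieri expansions of $e_is_{kk}$ and $e_is_{(k+1)(k-1)}$ share every constituent except $s_{kk1^i}$ and $s_{(k+1)^21^{i-2}}$ on the $s_{kk}$ side. The first of these is supplied by $(2k-1)(i-1)e_is_{k(k-1)1}$, again from $Q_{2k}$. The second is supplied by $e_{i-1}s_{(k+1)k}$ from $Q_{2k+1}$, or by $e_{i-2}s_{(k+1)(k-1)1}$, whose coefficients $(i-2)$ and $3k(i-3)$ respectively are large enough. A few small values of $i$ would need to be checked separately.

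For source (3), the coefficient $(k-3)$ grows. I would absorb it with the terms $3(k-1)e_is_{(k+1)(k-1)1}$ of the same $Q_{2k+1}$ together with $(2k+1)(i)e_{i+1}$-shifted terms $e_{i-1}s_{(k+1)k1}$ from $Q_{2k+2}$, applying \cref{lem:forced-cell-pieri} to move a cell from the strip into the first column.

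For source (1), the top-degree negatives from $-Q_n$ are absorbed by $e_2Q_{n-2}$ and $e_3Q_{n-3}$. For instance, $e_2s_{(h)(h-1)}$ covers $s_{(h+1)(h-1)}$ and $s_{h(h-1)1}$ with multiplicity matching the coefficients $2h-1$ and $h-2$ in $Q_{2h}$.

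\textbf{Expected obstacle.} I expect the main difficulty to be source (3), the growing negative coefficient $-(k-3)$ of $s_{kk1}$ in $Q_{2k+1}$. Source (1) is a close second: near the top of the sum the weights $(i-1)$ are small, so the available positive mass is limited. To handle these cases I would first tabulate, for a fixed target shape $\nu$ with third part at least $2$ or with many rows, the total positive and negative multiplicities as explicit linear functions of $n$ and $k$. Then I would check that the difference is nonnegative, treating separately the boundary shapes where a vertical strip is forced into the first column.

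Finally, $G_n=W_n+V_n\spos$ follows from $W_n\spos$ and $V_n\spos$.
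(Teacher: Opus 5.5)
Your Steps~1 and~2 match the paper. A vertical strip cannot shrink the first row, so only $Q_1,\dots,Q_5$ feed $W_n$. Tracking $e_n$, $s_{21^{n-2}}$, $s_{2^21^{n-4}}$ and $s_{2^31^{n-6}}$, including the $-s_{2^21^{n-4}}$ from the head term and the $-s_{22}$ in $Q_4$, then gives the stated formula.

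The gap is in Step~3. You are missing the observation that makes the check for $V_n$ finite: $G_n$ has no Schur constituent $s_\lambda$ with $\operatorname{tail}_2(\lambda)\le 1$.
\begin{itemize}
\item $H_m=\Delta_m-Q_m$ is supported on $\operatorname{tail}_2\ge 2$ by the definition of $Q_m$.
\item Multiplication by $e_i$ preserves this by \cref{prop:Pieri}.
\item \cref{eq:path-remainder-recurrence} says $G_n=H_n-\sum_{i=2}^{n-4}(i-1)e_iH_{n-i}$.
\end{itemize}
Consequently your source~(1) never has to be absorbed. At every shape where $-Q_n$ contributes, the coefficient of $G_n$ is exactly $0$. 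The head term has width two, so it only affects $W_n$. The possibly negative coefficients of $V_n$ are therefore confined to shapes with $\operatorname{tail}_2\ge 2$ reached from the seeds $-s_{hh}$ and $-(h-3)s_{hh1}$. These form four families: $(a,a,1^b)$ and $(a+1,a,1^b)$ with $b\ge 2$, $(a,a,2,1^b)$, and $(a+1,a,2,1^b)$.

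Without this reduction, the source-by-source domination you sketch does not work as written.
\begin{itemize}
\item Your example for~(1) mixes degrees: $e_2s_{h(h-1)}$ has degree $2h+1$, whereas $Q_{2h}$ has degree $2h$.
\item In~(2), the supplier $(i-2)e_{i-1}s_{(k+1)k}$ is smaller than the required $i-1$, and it vanishes at $i=2$. There the constituent $s_{(k+1)^2}$ of $-e_2s_{kk}$ is cancelled only by the term $+s_{(k+1)^2}$ of $-Q_{2k+2}$.
\item Also in~(2), $e_{i-2}s_{(k+1)(k-1)1}$ cannot produce $s_{(k+1)^21^{i-2}}$ at all, since that would require two new cells in row~$2$.
\end{itemize}

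More fundamentally, several seeds land on the same shape and must share one positive supply, and the margins are thin. At $(a,a,1^b)$, the seeds $-s_{(a-1)^2}$, $-s_{aa}$, $(4-a)s_{(a-1)^21}$ and $(3-a)s_{aa1}$ contribute $5b+2a-6-2ab$ in total. The terms $(2a-1)s_{a(a-1)1}$ and $s_{a(a-1)}$ supply $2ab-2a+1$, leaving exactly $5(b-1)$.

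So the proof really is the exact shape-by-shape tabulation that you mention only as a fallback, and it still has to be done. Restricted to the four families above, this is what the paper carries out. It obtains the net coefficients $5(b-1)$, $(6a+2)(b-1)$, $6b+7$ and $6a(b+1)-2$.
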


\begin{proof}
By \cref{lem:path-boundary},
it is direct to compute that $Q_1$,
$\dots$,
$Q_5$ are,
respectively,
\[
s_1,\quad
s_2,\quad
e_{21},\quad
s_{31}-s_{22}+3s_{211},\quad\text{and}\quad
s_{32}+3s_{311}+s_{221}.
\]
By definition,
it is direct to calculate that
\begin{align*}
G_4
&=e_{31}-e_{22}-Q_4+e_2 Q_2+2e_3 Q_1
=2s_{1^4},\quad\text{and}
\\
G_5
&=e_{41}-e_{32}-Q_5
+e_2 Q_3
+2e_3 Q_2
+3e_4 Q_1
=7s_{21^3}+4s_{1^5}.
\end{align*}
Thus $W_4=G_4$,
$W_5=G_5$,
and $V_4=V_5=0$.
For the remainder of the proof,
let~$n\ge 6$.

For a partition~$\lambda$,
define its \emph{width} to be its maximum part~$\lambda_1$.
For every~$m\ge 6$,
each Schur constituent of~$Q_m$ has width at least~$3$.
Since adding a vertical strip cannot decrease width,
only $Q_1,\dots,Q_5$ contribute to~$W_n$.
We apply \cref{prop:Pieri} to \cref{def:Gn}.
For completeness,
\cref{tab:path-width-two} lists every contribution to the coefficient
indexed by~$(2^j,\,1^{n-2j})$.
\begin{table}[htbp]
\centering
\caption{Contributions to the width-two parts of~$G_n$ for~$n\ge 6$.}
\label{tab:path-width-two}
\begin{tabular}{>{$}l<{$}*{4}{>{$}c<{$}}}
\toprule
\text{Term }s_{2^j 1^{n-2j}} & j=0 & j=1 & j=2 & j=3
\\
\midrule
e_1 e_{n-1}-e_2 e_{n-2} & & & -1 &
\\
(n-2) e_{n-1}Q_1 & n-2 & n-2 & &
\\
(n-3) e_{n-2}Q_2 & & n-3 & &
\\
(n-4) e_{n-3}Q_3 & n-4 & 2(n-4) & 2(n-4) & n-4
\\
(n-5) e_{n-4}Q_4 & & 3(n-5) & 2(n-5) & 3(n-5)
\\
(n-6) e_{n-5}Q_5 & & & n-6 & n-6
\\
\bottomrule
\end{tabular}
\end{table}
A blank entry means zero.
No term has~$j\ge 4$,
since every starting shape has at most three rows
and a vertical strip creates new rows only of length~$1$.
Summing its columns gives the desired formula for $W_n$.
Thus $W_n\spos$.

It remains to prove~$V_n\spos$.
The \emph{Schur support} of a symmetric function~$F$ is
\[
\operatorname{supp}_s(F)
=\{\lambda\colon\Coeff{s_\lambda}{F}\ne 0\}.
\]
By the definition of~$H_n$,
\[
\operatorname{supp}_s(H_n)
\subseteq \{\lambda\vdash n\colon\operatorname{tail}_2(\lambda)\ge 2\}.
\]
By \cref{prop:Pieri},
multiplying $e_i$ preserves the property~$\operatorname{tail}_2\ge 2$.
Thus \cref{eq:path-remainder-recurrence} implies
\[
\operatorname{supp}_s(G_n)
\subseteq \{\lambda\vdash n\colon\operatorname{tail}_2(\lambda)\ge 2\}.
\]
In view of \cref{def:Gn},
$e_1 e_{n-1}-e_2 e_{n-2}$ contributes nothing
since a contributing term would have width at least $3$,
and neither does $-Q_n$ since a contributing term would have tail at most $1$.
By \cref{lem:path-boundary},
the only negative summands left in \cref{def:Gn} are
\[
-(i-1) e_i s_{hh}\quad(h\ge 2)
\quad\text{and}\quad
-(i-1)(h-3) e_i s_{hh1}\quad(h\ge 4),
\]
with~$i\ge 2$ and total degree~$n$.
Consequently,
\cref{prop:Pieri} confines every possibly negative coefficient of~$V_n$
to the four families in \cref{tab:path-negative-families}.
\begin{table}[htbp]
\centering
\caption{Coefficients at every possible negative location of~$V_n$.}
\label{tab:path-negative-families}
\setlength{\tabcolsep}{12pt}
\begin{tabular}{*{3}{>{$}l<{$}}}
\toprule
\text{Partition }\lambda & \text{Range} & \Coeff{s_\lambda}{G_{|\lambda|}}
\\
\midrule
(a,a,1^b) & a\ge 3,\,b\ge 2 & 5(b-1)
\\
(a+1,\,a,\,1^b) & a\ge 2,\,b\ge 2 & (6a+2)(b-1)
\\
(a,a,2,1^b) & a\ge 3,\,b\ge 0 & 6b+7
\\
(a+1,\,a,\,2,\,1^b) & a\ge 2,\,b\ge 0 & 6a(b+1)-2
\\
\bottomrule
\end{tabular}
\end{table}
We now compute the entries in the last column of \cref{tab:path-negative-families}.
We call each Schur term~$c_\mu s_\mu$ in the expansion of~$Q_n$
in \cref{lem:path-boundary} a \emph{seed},
including its coefficient~$c_\mu$.
In each product $(i-1) e_i Q_{n-i}$,
\cref{prop:Pieri} shows that a seed~$c_\mu s_\mu$
contributes $(i-1)c_\mu$ to the coefficient of~$s_\lambda$
if~$\lambda/\mu$ is a vertical strip of size~$i$,
and contributes zero otherwise.
The seed $(h-2)s_{(h+1)(h-2)1}$
contributes to none of the four families in \cref{tab:path-negative-families}.
The contributions are listed in \cref{tab:path-seed-contributions}.
\begin{table}[htbp]
\centering
\caption{Contributions of the seeds to the four coefficients in
\cref{tab:path-negative-families}.}
\label{tab:path-seed-contributions}
\setlength{\tabcolsep}{7pt}
\renewcommand{\arraystretch}{1.3}
\begin{tabular}{@{}>{$}r<{$}@{}>{$}l<{$}*{4}{>{$}c<{$}}@{}}
\toprule
\multicolumn{2}{@{}c}{Seed} & (a,a,1^b) & (a+1,\,a,\,1^b)
& (a,a,2,1^b) & (a+1,\,a,\,2,\,1^b)
\\
\midrule
& s_{(a+1)(a-1)} & & b & &
\\
- & s_{(a-1)^2} & -(b+1) & & &
\\
- & s_{aa} & -(b-1) & -b & &
\\
(2a-1) & s_{a(a-1)1} & (2a-1)(b-1) & (2a-1)b
& (2a-1)(b+1) & (2a-1)(b+2)
\\
(2a+1) & s_{(a+1)a1} & & (2a+1)(b-2) & & (2a+1)b
\\
& s_{a(a-1)} & b & b+1 & &
\\
& s_{(a+1)a} & & b-1 & &
\\
3(a-1) & s_{(a+1)(a-1)1} & & 3(a-1)(b-1)
& & 3(a-1)(b+1)
\\
(4-a) & s_{(a-1)^21} & (4-a)b & & (4-a)(b+2) &
\\
(3-a) & s_{aa1} & (3-a)(b-2) & (3-a)(b-1)
& (3-a)b & (3-a)(b+1)
\\
\bottomrule
\end{tabular}
\end{table}

Summing the entries in each column of \cref{tab:path-seed-contributions}
gives the corresponding formula in \cref{tab:path-negative-families}.
These coefficients are positive in the stated ranges.
Every other coefficient of $V_n$ receives contributions only
from nonnegative seeds.
Hence $V_n\spos$.
\end{proof}

Now we are in a position to prove \cref{thm:path-bridge}.

\begin{proof}[Proof of \cref{thm:path-bridge}]
We prove $H_n\spos$ by induction on $n$.
For $0\le n\le 3$, we have $H_n=0$.
For $n\ge 4$,
\cref{eq:path-remainder-recurrence} expresses $H_n$
as a sum of $G_n$ and nonnegative multiples of $e_i H_{n-i}$
with smaller indices.
By \cref{lem:path-forcing-positive},
the induction hypothesis,
and \cref{prop:Pieri},
every summand is Schur positive.
Thus $H_n\spos$.

By definition,
every Schur constituent of $H_n$ is indexed
by a partition $\lambda$ with $\operatorname{tail}_2(\lambda)\ge 2$.
Hence the boundary coefficients of $\Delta_n=H_n+Q_n$
are those in \cref{lem:path-boundary}.
That lemma gives the asserted exceptional coefficients
and the two coefficients equal to $1$.
All other coefficients of $Q_n$ are nonnegative by \cref{lem:path-boundary}.
The coefficients indexed by partitions $\lambda$
with $\operatorname{tail}_2(\lambda)\ge 2$ are those of $H_n$
and are therefore also nonnegative.
\end{proof}

\section{Schur bounds for spiders with two odd legs}\label{sec:proof-two-odd}

Throughout this section,
let $b$ and $c$ be positive odd integers.
The goal of this section is to establish \cref{thm:threshold-engine},
in which the odd $a$ assertion implies \cref{thm:refined-bounds}(i),
while the even $a$ assertion,
together with \cref{prop:critical}, gives \cref{thm:classification}(ii).

\begin{theorem}\label{thm:threshold-engine}
Let~$b$ and $c$ be positive odd integers,
and $a\in \mathbb Z_{\ge 1}$.
Assume that $a$ is odd or $\delta_a(b,c)\ge 0$.
Then $f_a(b,c)\spos$,
where
\[
f_a(b,c)
=\begin{dcases*}
X_{S(a,b,c)}-s_{(h+1)(h-1)}+s_{hh}, & if $a$ is odd,
\\
X_{S(a,b,c)}-s_{(h+1)h}-\delta_a(b,c) s_{hh1}, & if $a$ is even and if $\delta_a(b,c)\ge 0$.
\end{dcases*}
\]
\end{theorem}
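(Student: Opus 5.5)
Our plan is induction on $a$ through the master recurrence (\cref{prop:rec.master}), combined with the Pieri estimates of \cref{lem:forced-cell-pieri} and \cref{lem:core-recurrence}. Write $N=a+b+c+1$ and $h=\floor{N/2}$. In the recurrence
\[
X_{S(a,b,c)}=C_a(b,c)+\sum_{i=2}^{a-1}(i-1)e_iX_{S(a-i,b,c)},
\]
every summand on the right is Schur positive except possibly the smaller spiders $S(a-i,b,c)$. These fail positivity only at the single exceptional coefficient. For $a-i$ odd that coefficient is $-s_{h'h'}$ with $h'=(N-i)/2$. For $a-i$ even it is $s_{h'h'1}$, and only when $\delta_{a-i}(b,c)<0$; this cannot happen, because $\delta_{a-i}\ge\delta_a\ge0$. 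So we substitute the inductive hypothesis $X_{S(a-i,b,c)}=f_{a-i}(b,c)+(\text{boundary terms})$ and reduce the claim to a statement about a finite explicit sum. The boundary terms are $s_{(h'+1)(h'-1)}-s_{h'h'}$ when $a-i$ is odd, and $s_{(h'+1)h'}+\delta_{a-i}s_{h'h'1}$ when $a-i$ is even. The claim becomes
\[
C_a(b,c)+\sum_{i}(i-1)e_i\cdot(\text{boundary}_{a-i})\;\gs\;(\text{boundary}_a).
\]

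The base cases are $a=1$ and $a=2$, which we handle separately. For $a=1$ we have $S(1,b,c)$ with three odd legs. We will run the same induction in the $b$-leg, starting from $S(1,1,1)$ and $S(c,1,1)$. For the latter, the identity $X_{S(n-3,1,1)}=(X_n-e_2X_{n-2})+\Delta_n$ together with \cref{thm:path-bridge} gives precisely $f\spos$ when $n$ is even. When $n=2h+1$ is odd, the same identity gives the one-even-leg base case with coefficient $3-h$ on $s_{hh1}$, and the $e$-positive part $X_n-e_2X_{n-2}$ must supply the deficit up to $\delta$. The general pair $(b,c)$ is then obtained by the symmetric recurrence obtained from \cref{prop:rec.master} with the roles of the legs permuted.

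The negative boundary terms $-(i-1)e_is_{h'h'}$ must be absorbed, and here the bounds of \cref{lem:core-recurrence} do the work. We have $C_a(b,c)\gs e_2^{(b+c)/2-1}C_a(1,1)$, and we extract from $C_a(1,1)$, which is the chromatic function of a small explicit clique-spider, a coefficient lower bound on $s_{(h+1)(h-1)}$, $s_{hh}$ and the $s_{\cdot\cdot1}$ shapes. By Pieri, each $-e_is_{h'h'}$ produces terms $s_{h'h'1^i}$, $s_{(h'+1)h'1^{i-1}}$, $s_{(h'+1)^21^{i-2}}$ and so on. We match them against positive contributions coming from the $i$ and $i-1$ terms: $e_{i-1}s_{(h'+1)(h'-1)}$ and $e_{i-1}s_{h'h'1}$ dominate via \cref{lem:forced-cell-pieri}, and the $i=2$ term is exactly what sets the threshold $a\le5b+5c+2$. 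We then verify, with a coefficient table like those in \cref{sec:coefficient-bounds}, that the only shapes where the total is not manifestly nonnegative are the two-row and $hh1$ shapes. Those coefficients are already known exactly from \cref{prop:critical} and from the stable-composition count by \cref{thm:s-coeff.XG}.

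We expect the main obstacle to be this bookkeeping. The shapes $s_{(h'+1)^21^{i-2}}$ and $s_{h'h'1^i}$ with large $i$ receive negative mass $-(i-1)$ and need a matching positive source of the same order in $i$. We plan to find it in $(i-2)e_{i-1}\cdot s_{(h'+1)(h'-1)}$-type terms from the neighbouring odd index, combined with the $e_2$-growth of $C_a$. If this pairing fails near the threshold, the fallback is to control those coefficients directly with \cref{thm:s-coeff.XG} by counting stable compositions, as in \cref{prop:critical}.
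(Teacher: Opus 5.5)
\textbf{There is a genuine gap in the odd-$a$ case.} You assert that the even-leg spiders $S(a-i,b,c)$ in the master recurrence satisfy $\delta_{a-i}(b,c)\ge\delta_a(b,c)\ge 0$. But the hypothesis $\delta_a(b,c)\ge 0$ is imposed only when $a$ is even; for odd $a$ the statement covers all odd $b,c$.

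Take $(a,b,c)=(101,3,1)$. Recursing on the $101$-leg produces $S(98,3,1),S(96,3,1),\dots$, and for instance $\delta_{98}(3,1)=-38$. By \cref{prop:critical} these spiders are not Schur positive, so the theorem gives you no inductive bound for them. Worse, the terms $(i-2)e_{i-1}\delta_{a-i+1}(b,c)s_{h'h'1}$ that you want as positive partners of $-(i-1)e_is_{h'h'}$ are actually negative. Your ``run the induction in the $b$-leg'' for $a=1$ has the same defect unless the recursed leg is the shorter of $b,c$.

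The missing idea is that, for odd $a$, $f_a(b,c)$ depends only on the multiset $\{a,b,c\}$. First dispose of the family $f_a(1,1)$ via $X_{S(a,1,1)}=(X_N-e_2X_{N-2})+\Delta_N$ and \cref{thm:path-bridge}. Then recurse on the \emph{median} leg, $c\le a\le b$ with $a\ge 3$. This forces $\delta_a(b,c)\ge 2a+(5c+3)/2\ge 10$, so every even-leg spider in the recurrence is admissible. This is also why the paper inducts on $N=a+b+c+1$ over all triples rather than on $a$.

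\textbf{Even with $\delta_a$ large, your pairing misses the critical shape.} The pairing $e_{i-1}s_{h'h'1}\gs e_is_{h'h'}$ from \cref{lem:forced-cell-pieri} cannot handle $i=2$, since the master recurrence has no $i=1$ term. Moreover, positivity is not in danger at two-row or $hh1$ shapes, as you predict, but at a shape whose coefficient \cref{prop:critical} does not give. In the odd case one computes exactly
\[
-(U_h-R_h)+e_2(U_{h-1}-R_{h-1})=s_{(h+1)(h-2)1}+s_{h(h-2)1^2}-s_{(h-1)^21^2}.
\]
The residual $-s_{(h-1)^21^2}$ must be paid for as follows:
\begin{enumerate}
\item for $a\ge5$, by $(2\delta_a(b,c)-1)e_3s_{(h-2)^21}$ from the $j=2$ block, which needs $\delta_a\ge 1$ and is again supplied by the median choice;
\item for $a=3$, where no even neighbour exists, by $C_3(b,c)\gs e_2^{h-3}C_3(1,1)\gs 11s_{(h-1)^21^2}$.
\end{enumerate}
In the even case the $j=0,1$ terms leave $-s_{(h-1)^21^3}$. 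This is absorbed by $\delta_a(b,c)\bigl(e_2s_{(h-1)^21}-s_{hh1}\bigr)$ when $\delta_a\ge 1$, and by the $j=2$ block when $\delta_a=0$.

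Your proposal defers all of this to an unspecified coefficient table with a fallback, so the decisive estimates are absent. The base case $a=2$ is also not stated. There $X_{S(2,b,c)}=C_2(b,c)$ is Schur positive by \cref{prop:universal-core}, and subtracting the exact coefficients from \cref{prop:critical} gives $f_2(b,c)\gs 0$.
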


The proof begins with the initial families~$f_a(1,1)$ for odd~$a$ and~$f_2(b,c)$.
By symmetry,
the remaining odd-leg cases have~$a\ge 3$.
\Cref{lem:s+:C3} supplies the additional Schur-positive term
needed for the case $a=3$.
The estimates in \cref{lem:even-block,lem:odd-block}
then handle even~$a\ge 4$ and odd~$a\ge 5$,
respectively,
before the proof of \cref{thm:threshold-engine} combines these results.
For~$N\ge 4$,
put~$h=\floor{N/2}$ and define
\[
\mathcal H_N
=s_{(h-1)^2 1^{N-2h+2}},
\qquad R_h
=s_{hh},
\qquad B_h
=s_{(h+1)h},
\qquad T_h
=s_{hh1},
\qquad U_h
=s_{(h+1)(h-1)}.
\]
Omitting trailing zero parts gives~$R_0=1$,
$B_0=s_1$,
and~$U_1=s_2$.
We set~$T_0=0$.
We first isolate a Schur-positive term in~$C_3(b,c)$
that will compensate for the negative term in the odd-leg induction
when~$a=3$.

\begin{lemma}\label{lem:s+:C3}
Let $b$ and $c$ be positive odd integers,
and put $h=(b+c+4)/2$.
Then $C_3(b,c)\gs 11\mathcal H_{2h}$.
\end{lemma}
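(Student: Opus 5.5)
The plan is to reduce to the base case $b=c=1$ with \cref{lem:core-recurrence} and then propagate one Schur term with \cref{prop:Pieri}. First, note that $\mathcal C_3(b,c)=S^{1bc}_{311}$ has $3+1+b+c=b+c+4=2h$ vertices. So $\mathcal H_{2h}=s_{(h-1)^2 1^2}$ has the right degree, with $h-1=(b+c+2)/2$. Since $b$ and $c$ are both odd, \cref{lem:core-recurrence} with $a=3$ gives
\[
C_3(b,c)\gs e_2^{k}\,C_3(1,1),\qquad k=(b+c)/2-1\ge 0.
\]
It therefore suffices to prove the base inequality $C_3(1,1)\gs 11\,s_{2211}$. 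Once this holds, Schur positivity is preserved by multiplication by the Schur-positive function $e_2^k$. This gives
\[
C_3(b,c)\gs e_2^k\bigl(C_3(1,1)-11s_{2211}\bigr)+11\,e_2^k s_{2211}\gs 11\,e_2^k s_{2211}.
\]

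Next, I would extract the needed term from $e_2^k s_{2211}$ by \cref{prop:Pieri}. Adding a vertical $2$-strip with one cell in each of the first two rows takes $(m,m,1,1)$ to $(m+1,m+1,1,1)$. Iterating $k$ times, $e_2^k s_{2211}$ contains $s_{(k+2)^2 1^2}$ with coefficient at least $1$, and every term of the Pieri expansion is nonnegative. Since $k+2=(b+c+2)/2=h-1$, this gives $e_2^k s_{2211}\gs \mathcal H_{2h}$, and the lemma follows.

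The main obstacle is the base computation. We must show that $C_3(1,1)=X_{S^{111}_{311}}/2$ is Schur positive with $\Coeff{s_{2211}}{C_3(1,1)}\ge 11$. The graph is small: a triangle with apex $r$, the edge $ru$, and two pendant vertices at $u$. Schur positivity of $C_3(1,1)$ itself is already given by \cref{prop:universal-core}, so only the single coefficient needs checking. Two routes are available:
\begin{itemize}
\item The direct route applies \cref{thm:s-coeff.XG} to the shape $2211$. We would enumerate its special ribbon tabloids, which have ribbon-length types such as $2211$, $321$, $411$, $33$, $42$, $51$, $6$ with the appropriate signs. For each type, count the stable compositions of this $6$-vertex graph, whose independence number is $3$, so most large types vanish.
\item Alternatively, \cref{prop:rec.master} with $a=3$ gives
\[
C_3(1,1)=X_{S(3,1,1)}-e_2X_{S(1,1,1)}.
\]
Both spiders can be expanded through \cref{prop:3spider} and the path recurrence $X_n=e_n+\sum_i(i-1)e_iX_{n-i}$, and the $s_{2211}$ coefficient read off.
\end{itemize}
I would carry out the ribbon-tabloid count and cross-check it with the recurrence. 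The expected value of $[s_{2211}]X$ is at least $22$.
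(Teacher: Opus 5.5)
Your proposal is correct and is essentially the paper's proof. \cref{lem:core-recurrence} gives $C_3(b,c)\gs e_2^{h-3}C_3(1,1)$, the base inequality $C_3(1,1)\gs 11s_{2211}$ is checked directly, and Pieri's rule with vertical $2$-strips in the first two rows produces $\mathcal H_{2h}$. The paper writes out the full Schur expansion of $C_3(1,1)$, whereas you take its positivity from \cref{prop:universal-core} and check only one coefficient; that is equally valid.

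Your expected value is exactly right. In \cref{thm:s-coeff.XG} for shape $2211$, the two special tabloids of type $321$ cancel, and types $33$ and $6$ do not occur. Since $\mathcal C_3(1,1)$ has independence number $3$, the types with a part at least $4$ also vanish. Only $\widetilde a_{2211}-\widetilde a_{3111}-\widetilde a_{222}=64-18-24=22$ survives, so $[s_{2211}]C_3(1,1)=11$.
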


\begin{proof}
It is direct to compute that
\[
C_3(1,1)
=2s_{321}
+5s_{3111}
+8s_{222}
+11s_{2211}
+23s_{21^4}
+24s_{1^6}
\gs 11s_{2^21^2}.
\]
By \cref{lem:core-recurrence},
\[
C_3(b,c)
\gs e_2^{h-3}C_3(1,1)
\gs 11e_2^{h-3}s_{2^21^2}
\gs 11 \mathcal H_{2h}.
\]
The last step follows from \cref{prop:Pieri}
by repeatedly adding a vertical~$2$-strip to the first two rows.
\end{proof}

For even~$a\ge 4$,
the master recurrence leads to the following estimate
for the paired terms after applying the induction hypotheses.

\begin{lemma}\label{lem:even-block}
Let $b$ and $c$ be positive odd integers.
Let $a\ge 4$ be an even integer,
and $h=(a+b+c)/2$.
If $\delta_a(b,c)\ge 0$, then $M_E\spos$,
where $M_E=\sum_{j=0}^{a/2-1}L_j$,
with
\[
L_j
=(2j-1) e_{2j}
(B_{h-j}+\delta_{a-2j}(b,c)T_{h-j})
-2j\,e_{2j+1}R_{h-j}.
\]
\end{lemma}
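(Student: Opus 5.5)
The plan is to show that the single negative block $L_0=-(B_h+\delta_a(b,c)T_h)$ and the negative parts $-2j\,e_{2j+1}R_{h-j}$ of the blocks $L_j$ with $j\ge 1$ are all absorbed by the positive parts $(2j-1)e_{2j}(B_{h-j}+\delta_{a-2j}(b,c)T_{h-j})$. Everything rests on multiplicity-free Pieri expansions (\cref{prop:Pieri}), compared constituent by constituent. First note that $\delta_{a-2j}(b,c)=\delta_a(b,c)+j$. So with $\delta=\delta_a(b,c)\ge 0$, the coefficient of $e_{2j}T_{h-j}$ in $L_j$ is $(2j-1)(\delta+j)$.

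\emph{Step 1: the constituents of $e_{2j+1}R_m$.} Fix $j\ge 1$ and put $m=h-j$. Adding a vertical $(2j+1)$-strip to $(m,m)$ gives exactly three shapes:
\[
(m,m,1^{2j+1}),\qquad (m+1,m,1^{2j}),\qquad (m+1,m+1,1^{2j-1}).
\]
I will check that each appears once in $e_{2j}T_m$, by adding a vertical $2j$-strip to $(m,m,1)$. This is the mechanism of \cref{lem:forced-cell-pieri}, applicable since $2j+1>2=\ell(R_m)$, so $e_{2j}T_m\gs e_{2j+1}R_m$. The middle shape $(m+1,m,1^{2j})$ also appears once in $e_{2j}B_m$. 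Hence $L_j$ has coefficient at least $(2j-1)(\delta+j)-2j$ on the first and third shapes, and at least $(2j-1)(\delta+j)+(2j-1)-2j$ on the middle one. All other constituents of $L_j$ come with positive coefficient.

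\emph{Step 2: the case $j\ge 2$.} Here $(2j-1)(\delta+j)\ge 3j-\tfrac{3}{2}\cdot 0+\dots\ge 2j$ already for $\delta=0$, since $(2j-1)j\ge 2j$ when $j\ge 2$. Thus $L_j\spos$ for every $j\ge 2$, and these blocks may simply be discarded.

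\emph{Step 3: the combined block $L_0+L_1$.} This is the main obstacle, because the budget is tight when $\delta=0$. The block is
\[
L_0+L_1=e_2\bigl(B_{h-1}+(\delta+1)T_{h-1}\bigr)-2e_3R_{h-1}-B_h-\delta T_h.
\]
Write $m=h-1$ and list the Pieri constituents explicitly.
\begin{itemize}
\item $e_2B_m$ contains $B_h=(m+2,m+1)$, $T_h=(m+1,m+1,1)$, $(m+2,m,1)$, $(m+1,m,1,1)$ and $(m+2,m-1,\dots)$-type shapes.
\item $e_2T_m$ contains $(m+1,m+1,1)$, $(m+1,m,2)$, $(m+1,m,1,1)$, $(m,m,2,1)$ and $(m,m,1^3)$.
\item $e_3R_m$ contains $(m,m,1^3)$, $(m+1,m,1,1)$ and $(m+1,m+1,1)$.
\end{itemize}
I will then tabulate the coefficients.
\begin{itemize}
\item $B_h$: $1-1=0$.
\item $T_h$: $1+(\delta+1)-2-\delta=0$.
\item $(m+1,m,1,1)$: $1+(\delta+1)-2\ge 0$.
\item $(m,m,1^3)$: $(\delta+1)-2$, which is negative when $\delta=0$.
\end{itemize}
If that last deficit really occurs, I will borrow from the surplus of $L_2$. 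Its constituent $(m-1,m-1,1^{5})$ is a different shape, so instead I expect to compensate via a positive term in $e_2T_m$ or $e_2B_m$ that I have missed. Failing that, I will handle $\delta=0$ directly using the exact value $\delta=\ceil{(5b+5c+2-a)/2}$ together with $a\ge 4$. The small cases $m\le 2$, where some shapes degenerate, will be checked by hand. Summing the blocks then gives $M_E\spos$.
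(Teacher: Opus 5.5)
Your Steps 1--3 are correct and follow the paper's argument. You have the forced-cell comparison $e_{2j}T_{h-j}\gs e_{2j+1}R_{h-j}$, the conclusion $L_j\spos$ for $j\ge 2$, and the exact bookkeeping for $L_0+L_1$. That bookkeeping isolates a single deficit $\delta-1$ at $s_{(h-1)^21^3}=\mathcal H_{2h+1}$. (The ``$(m+2,m-1,\dots)$-type shapes'' you list in $e_2B_m$ do not exist, but this is harmless.)

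The genuine gap is the case $\delta=\delta_a(b,c)=0$. The hypothesis allows it (e.g.\ $a=12$, $b=c=1$), and your proposal leaves it open. Your fallback of finding a missed positive term inside $L_0+L_1$ cannot succeed. Every constituent of $e_2B_m$ has first row at least $m+1$, and $s_{(m,m,1^3)}$ occurs exactly once in each of $e_2T_m$ and $e_3R_m$. So its coefficient in $L_0+L_1$ is exactly $(\delta+1)-2=-1$. Thus $L_0+L_1$ is genuinely not Schur positive when $\delta=0$, and the compensation must come from another block.

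The idea you discarded, borrowing from $L_2$, is the right one; you simply checked the wrong shape. Because $a$ is even and $b,c$ are odd, $\delta=0$ forces $a=5(b+c)+2\ge 12$. Hence $a/2-1\ge 5$ and $L_2$ really occurs in $M_E$; the bound $a\ge 4$ alone would not give this. Now apply your own Step 1 with $j=2$ and $m=h-2$. The third shape $(m+1,m+1,1^{2j-1})$ is $(h-1,h-1,1^3)=\mathcal H_{2h+1}$, exactly the deficit shape; it is only the first shape $(h-2,h-2,1^5)$ that differs. Its coefficient in $L_2$ is at least $(2j-1)(\delta+j)-2j=3\cdot 2-4=2$. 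This is the paper's step $L_2\gs 2e_4T_{h-2}\gs 2\mathcal H_{2h+1}$. Hence $L_0+L_1+L_2\spos$, and together with $L_j\spos$ for $j\ge 3$ you get $M_E\spos$. As written, your proposal proves the lemma only under $\delta_a(b,c)\ge 1$.
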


\begin{proof}
Here~$N=2h+1$, so $\mathcal H_{2h+1}=s_{(h-1)^21^3}$.
Note that $a/2-1\ge 1$ and
\[
h-j
\ge h-(a/2-1)
=(b+c)/2+1
\ge 2.
\]
Taking~$\lambda=(h-j)^2$ and $i=2j+1$ in \cref{lem:forced-cell-pieri},
we obtain
\[
e_{2j}T_{h-j}
\gs e_{2j+1}R_{h-j}
\quad\text{for $1\le j\le a/2-1$}.
\]
Put $\alpha_j=(2j-1)\delta_{a-2j}(b,c)$.
Note that $\delta_{a-2j}(b,c)=\delta_a(b,c)+j$.
Then
\begin{equation}\creflabel[ineq]{pf:s+.Lj}
L_j
\gs (2j-1) e_{2j}B_{h-j}
+(\alpha_j-2j) e_{2j}T_{h-j}.
\end{equation}
If $j\ge 2$,
then $\alpha_j-2j
=(2j-1)\delta_a(b,c)+j(2j-3)\ge 0$ and $L_j\spos$.

It remains to control
\[
L_0+L_1
=-\brk1{B_h+\delta_a(b,c)T_h}
+e_2\brk1{
B_{h-1}
+(\delta_a(b,c)+1)T_{h-1}}
-2e_3 R_{h-1}.
\]
Since~$h-1=(a+b+c)/2-1\ge 2$,
\cref{prop:Pieri} gives
\crefalias{equation}{ineq}
\begingroup
\crefalias{equation}{ineq}
\begin{align}
\creflabel[ineq]{ineq:e2B>B}
e_2 B_{h-1}
&\gs B_h+T_h+s_{h(h-1)11},
\\
\creflabel[ineq]{ineq:e2T>T}
e_2 T_{h-1}
&\gs T_h+s_{h(h-1)11}+\mathcal H_{2h+1},
\quad\text{and}
\\
\notag
e_3 R_{h-1}
&=T_h+s_{h(h-1)11}+\mathcal H_{2h+1}.
\end{align}
\endgroup
\crefalias{equation}{equation}
It follows that
\[
L_0+L_1-\delta_a(b,c)(e_2 T_{h-1}-T_h)
=-B_h
+e_2 B_{h-1}
+e_2 T_{h-1}
-2e_3 R_{h-1}
\gs -\mathcal H_{2h+1}.
\]
If~$\delta_a(b,c)\ge 1$,
then \cref{ineq:e2T>T} gives
\[
L_0+L_1
\gs \delta_a(b,c)\mathcal H_{2h+1}-\mathcal H_{2h+1}
\spos
\quad\text{and}\quad
M_E
=(L_0+L_1)+\sum_{j=2}^{a/2-1}L_j\spos.
\]
Otherwise~$\delta_a(b,c)=0$.
Then $a=5(b+c)+2\ge 12$.
By \cref{pf:s+.Lj,prop:Pieri},
\[
L_2
\gs (\alpha_2-4) e_4 T_{h-2}
=2e_4 T_{h-2}
\gs 2\mathcal H_{2h+1}.
\]
It follows that
\[
L_0+L_1+L_2
\gs -\mathcal H_{2h+1}+L_2
\gs \mathcal H_{2h+1}
\spos
\quad\text{and}\quad
M_E
=(L_0+L_1+L_2)+\sum_{j=3}^{a/2-1}L_j\spos.
\qedhere
\]
\end{proof}

For odd~$a$,
the paired recurrence terms require a different estimate.
The next lemma also identifies the term~$J_h$ needed for the case~$a=3$.

\begin{lemma}\label{lem:odd-block}
Let~$a,b,c$ be positive odd integers with $a=2t+1\ge 5$.
If $\delta_a(b,c)\ge 1$, then $M_O\spos$,
where
\[
M_O
=\sum_{j=0}^{t}(2j-1) e_{2j}(U_{h-j}-R_{h-j})
+\sum_{j=0}^{t-1}
2j\,\delta_{a-2j-1}(b,c) e_{2j+1}T_{h-j-1}.
\]
Moreover,
for every integer~$h\ge 3$ in degree~$N=2h$,
put $J_h=-(U_h-R_h)+e_2(U_{h-1}-R_{h-1})$.
Then $J_h+\mathcal H_{2h}\spos$.
\end{lemma}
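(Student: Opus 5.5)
The lemma has two assertions. I will prove the statement about $J_h$ first by a direct Pieri computation. Then I will prove $M_O\spos$ by isolating the $j=0,1$ summands, which are exactly $J_h$, and absorbing every remaining negative piece with \cref{lem:forced-cell-pieri}. Note that the $j=0$ summand of the second sum vanishes.

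\emph{Step 1: the term $J_h$.} Since $h\ge 3$, both $U_{h-1}=s_{h(h-2)}$ and $R_{h-1}=s_{(h-1)^2}$ are genuine two-row shapes. By \cref{prop:Pieri}, $e_2U_{h-1}$ is the sum of the shapes obtained by adding a vertical $2$-strip to $(h,h-2)$:
\[
e_2U_{h-1}=U_h+s_{(h+1)(h-2)1}+s_{h(h-1)1}+s_{h(h-2)11}.
\]
Similarly, adding a vertical $2$-strip to $(h-1,h-1)$ gives
\[
e_2R_{h-1}=R_h+s_{h(h-1)1}+s_{(h-1)^211}.
\]
Subtracting, the terms $U_h-R_h$ and $s_{h(h-1)1}$ cancel, so
\[
J_h=s_{(h+1)(h-2)1}+s_{h(h-2)11}-\mathcal H_{2h}.
\]
Hence $J_h+\mathcal H_{2h}\spos$.

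\emph{Step 2: the main claim.} Write $\delta=\delta_a(b,c)$ and $K=5b+5c+2-a$. Since $a$ is odd, $K$ is odd and $\delta=(K+1)/2$. A direct check gives
\[
\delta_{a-2j+1}(b,c)=\frac{K+2j-1}{2}=\delta+j-1\ge j
\qquad(\delta\ge 1).
\]
Reindex the second sum of $M_O$ by $j\mapsto j-1$, and drop the nonnegative terms $(2j-1)e_{2j}U_{h-j}$. This gives
\[
M_O\gs J_h+\sum_{j=2}^{t}\Bigl[2(j-1)\delta_{a-2j+1}(b,c)\,e_{2j-1}T_{h-j}-(2j-1)e_{2j}R_{h-j}\Bigr].
\]
Here $h=(a+b+c+1)/2$, so $h-j\ge h-t=(b+c+2)/2\ge 2$. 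Apply \cref{lem:forced-cell-pieri} with $\lambda=(h-j)^2$ and $i=2j>2=\ell(\lambda)$; it gives $e_{2j-1}T_{h-j}\gs e_{2j}R_{h-j}$. Therefore each bracket is bounded below by
\[
\bigl(2(j-1)j-(2j-1)\bigr)e_{2j-1}T_{h-j},
\]
and this coefficient is nonnegative for $j\ge 2$, since $2j^2-4j+1\ge 0$.

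\emph{Step 3: absorbing $-\mathcal H_{2h}$.} The hypothesis $a\ge 5$ gives $t\ge 2$, so the $j=2$ bracket is present. Its surplus coefficient is at least $2\cdot 2-3=1$, so that bracket is $\gs e_3T_{h-2}=e_3s_{(h-2)^21}$. Adding a vertical $3$-strip in rows $1,2,4$ produces $(h-1,h-1,1,1)$, so $e_3T_{h-2}\gs\mathcal H_{2h}$. Combined with Step 1, this gives
\[
M_O\gs J_h+\mathcal H_{2h}\spos.
\]

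The main obstacle is this final absorption. It is the only place where $a\ge 5$ and $\delta\ge 1$ are used in an essential way, and it requires the exact value $\delta_{a-2j+1}(b,c)=\delta+j-1$ for odd $a$. I will therefore double-check this parity computation of the ceiling before relying on it.
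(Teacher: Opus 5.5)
Your proof is correct and follows essentially the same route as the paper: the same vertical-strip computation giving $J_h=s_{(h+1)(h-2)1}+s_{h(h-2)11}-\mathcal H_{2h}$, the same reindexing with $\delta_{a-2j+1}(b,c)=\delta_a(b,c)+j-1$, and the same use of \cref{lem:forced-cell-pieri} with $\lambda=(h-j)^2$ and $i=2j$. It also finishes the same way, absorbing $-\mathcal H_{2h}$ into the $j=2$ term via $e_3T_{h-2}\gs\mathcal H_{2h}$. The only unstated detail is that Step 1 applies here because $h=(a+b+c+1)/2\ge 4$ when $a\ge 5$.
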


\begin{proof}
In degree~$N=2h$ with~$h\ge 3$,
\cref{prop:Pieri} gives the exact identity
\[
J_h
=s_{(h+1)(h-2)1}+s_{h(h-2)1^2}-\mathcal H_{2h}.
\]
Thus $J_h+\mathcal H_{2h}\spos$ throughout this range.
We now prove~$M_O\spos$.
Note that
\[
\delta_{a-2j-1}(b,c)
=\ceil{(5b+5c+2-(a-2j-1))/2}
=\delta_a(b,c)+j.
\]
Reindexing the second sum,
we can write $M_O=J_h+\sum_{j=2}^{t}G_j$,
where
\[
G_j
=(2j-1) e_{2j}(U_{h-j}-R_{h-j})
+2(j-1)\brk1{\delta_a(b,c)+j-1}e_{2j-1}T_{h-j},
\]
with $h-j\ge h-t=(a+b+c+1)/2-(a-1)/2\ge 2$.
By \cref{prop:Pieri},
\[
G_j
\gs (2j-1)(e_{2j-1}T_{h-j}-e_{2j}R_{h-j})
+\beta_j e_{2j-1}T_{h-j},
\]
where
\[
\beta_j
=2(j-1)\brk1{\delta_a(b,c)+j-1}-(2j-1)
\ge 2\cdot 1\cdot j-(2j-1)
=1.
\]
Taking $\lambda=(h-j)^2$ and~$i=2j$ in \cref{lem:forced-cell-pieri},
we obtain $e_{2j-1}T_{h-j}\gs e_{2j}R_{h-j}$.
Thus, for $2\le j\le t$,
\[
G_j
\gs \beta_j e_{2j-1}T_{h-j}\spos.
\]
For $j=2$, we have $\beta_2=2\delta_a(b,c)-1\ge 1$.
By \cref{prop:Pieri}, $e_3 T_{h-2}\gs \mathcal H_{2h}$.
Consequently,
\[
G_2
\gs \beta_2e_3 T_{h-2}
\gs \beta_2\mathcal H_{2h}
\gs \mathcal H_{2h}.
\]
Together with $J_h\gs -\mathcal H_{2h}$ and $G_j\spos$ for $j\ge 3$,
this proves $M_O\spos$.
\end{proof}

The assertion~$M_O\spos$ in \cref{lem:odd-block} does not extend to~$a=3$:
then~$t=1$ and~$M_O=J_h$,
whose coefficient at~$\mathcal H_{2h}$ is~$-1$.
The bound $J_h+\mathcal H_{2h}\spos$ still applies.

We now combine the initial cases with these two block estimates
to prove the theorem by induction.

\begin{proof}[Proof of \cref{thm:threshold-engine}]
We use induction on $N=a+b+c+1$,
simultaneously for all triples satisfying the hypotheses.
For odd $a$, put $N=a+3=2h$.
By \cref{prop:3spider},
\[
f_a(1,1)
=\bigl(X_N-e_2 X_{N-2}\bigr)
+\bigl(\Delta_N+s_{hh}-s_{(h+1)(h-1)}\bigr).
\]
The first summand is $e$-positive by \cref{thm:TW+},
and the second is Schur positive by \cref{thm:path-bridge}.
Hence $f_a(1,1)\spos$.
On the other hand, recall that $C_2(b,c)=X_{S(2,b,c)}$.
By \cref{prop:universal-core,prop:critical},
\[
f_2(b,c)
=X_{S(2,b,c)}
-s_{(h+1)h}
-\delta_2(b,c) s_{hh1}
\spos.
\]
These base families include every admissible triple with $N\le 5$.
For the remaining triples,
assume the result for all admissible triples with fewer than $N$ vertices.

Suppose first that $a\ge 4$ is even.
For $1\le j\le a/2-1$, we have $\delta_{a-2j}(b,c)=\delta_a(b,c)+j\ge 0$.
Thus every smaller even-leg spider in the recurrence
satisfies the hypotheses of the theorem.
Applying the induction hypothesis to the smaller spiders
and using \cref{prop:rec.master,prop:universal-core,lem:even-block},
we obtain
\begin{align*}
f_a(b,c)
&=C_a(b,c)
+\sum_{j=1}^{a/2-1}
\brk2{
(2j-1) e_{2j}X_{S(a-2j,\,b,\,c)}
+2j\,e_{2j+1}X_{S(a-2j-1,\,b,\,c)}
}
-B_h-\delta_a(b,c)T_h
\\
&\gs \sum_{j=1}^{a/2-1}\brk2{
(2j-1) e_{2j}
\brk1{B_{h-j}+\delta_{a-2j}(b,c)T_{h-j}}
-2j\,e_{2j+1}R_{h-j}
}
+L_0
=M_E
\spos.
\end{align*}

It remains to consider odd $a$.
In this case,
$f_a(b,c)$ depends only on the multiset of the three leg lengths.
The base family $f_a(1,1)$ covers all triples with two legs of length $1$.
For every remaining triple,
we may therefore permute the legs so that $c\le a\le b$ and $a\ge 3$.
Then
\[
\delta_a(b,c)
=(5b+5c+3-a)/2
\ge 2a+(5c+3)/2
\ge 10.
\]
First suppose~$a=3$.
Here $N=b+c+4=2h$ and $h\ge 3$,
so \cref{lem:odd-block} gives~$J_h\gs -\mathcal H_{2h}$.
By \cref{prop:rec.master},
\begin{align*}
f_3(b,c)
&=C_3(b,c)+e_2 X_{S(1,b,c)}-U_h+R_h
=C_3(b,c)+e_2 f_1(b,c)+J_h
\\
&\gs \brk1{C_3(b,c)-\mathcal H_{2h}}
+e_2 f_1(b,c)
\spos
\end{align*}
by \cref{lem:s+:C3} and by induction.

Now suppose that $a=2t+1\ge 5$.
For $1\le j\le t-1$, we have $\delta_{a-2j-1}(b,c)=\delta_a(b,c)+j\ge 1$.
Hence the smaller even-leg spiders satisfy the hypotheses of the theorem.
The induction hypothesis gives
\begin{align*}
X_{S(a-2j,\,b,\,c)}
&\gs U_{h-j}-R_{h-j}
\quad\text{for $1\le j\le t$, and}
\\
X_{S(a-2j-1,\,b,\,c)}
&\gs B_{h-j-1}
+\delta_{a-2j-1}(b,c)T_{h-j-1},
\quad\text{for $1\le j\le t-1$}.
\end{align*}
By \cref{prop:rec.master,prop:universal-core,lem:odd-block},
we can then deduce that
\begin{align*}
f_a(b,c)
&=C_a(b,c)-U_h+R_h
+\sum_{j=1}^{t}(2j-1) e_{2j}X_{S(a-2j,\,b,\,c)}
+\sum_{j=1}^{t-1}2j\,e_{2j+1}X_{S(a-2j-1,\,b,\,c)}
\\
&\gs -U_h+R_h
+\sum_{j=1}^{t}(2j-1) e_{2j}(U_{h-j}-R_{h-j})
+\sum_{j=1}^{t-1}2j\,e_{2j+1}
\bigl(B_{h-j-1}+\delta_{a-2j-1}(b,c)T_{h-j-1}\bigr)
\\
&\gs M_O\spos.
\end{align*}
This completes the induction proof.
\end{proof}

\section{Even-leg bounds and completion of the classification}\label{sec:proof-two-even}

Throughout this section,
we fix an even integer~$a\ge 2$,
and denote~$X_{b,c}=X_{S(a,b,c)}$.
Recall that
\[
N
=a+b+c+1,\quad
h
=\floor{N/2},
\quad\text{and}\quad
\delta_a(b,c)
=\ceil{(5b+5c+2-a)/2}.
\]
We avoid confusion by writing~$h_{b,c}$ for~$h$
when the graph is unclear from context.
By definition,
the following identities hold whenever all shifted leg lengths remain positive:
\begin{align}
&\label{h.-2}
h_{b,\,c-2}
=h_{b-2,\,c}
=h_{b,c}-1,
\\
&\label{h.-1}
h_{b-1,\,c}
=h_{b,\,c-1}
=h_{b,c}-\mathbf 1(\text{$b+c$ is odd}),
\\
&\label{delta.-2}
\delta_a(b-2,\,c)
=\delta_a(b,\,c-2)
=\delta_a(b,c)-5,
\quad\text{and}
\\
&\label{delta.-1}
\delta_a(b-1,\,c)
=\delta_a(b,\,c-1)
=\delta_a(b,c)-2-\mathbf 1(\text{$b+c$ is odd}).
\end{align}
We use $c$ and $p$ to denote positive odd integers,
and use $q$ to denote a positive even integer.

In this section,
we first prove Schur positivity for spiders with exactly two even legs,
including~$S(a,q,1)$.
Then \cref{cor:propagation} gives Schur positivity of $S(a,q,r)$
for every integer $r\ge 1$,
including the case of three even legs.
Define
\[
g_{b,c}
=\begin{dcases*}
X_{b,c}
-s_{(h+1)h}
-\delta_a(b,c) s_{hh1},
& if $b+c$ is even,
\\
X_{b,c}
+2\delta_a(b-1,\,c) e_1 s_{(h-1)^21},
& if $b+c$ is odd.
\end{dcases*}
\]
Then $g_{b,c}=g_{c,b}$ by \cref{delta.-1}.

For odd $b,c$ with $\delta_a(b,c)\ge 0$,
the bound $g_{b,c}\spos$ is exactly the even-$a$ assertion
of \cref{thm:threshold-engine}.
The following proposition extends this bound to $\delta_a(b,c)<0$
and simultaneously proves Schur positivity when $b$ is even.
In the latter case,
the correction term allows adjacent terms in the master recurrence
to compensate for a negative coefficient of a neighboring odd-leg spider.

\begin{proposition}\label{prop:general-even-bounds}
Let~$a\ge 2$ be even.
For any integer~$b\ge 1$ and any odd~$c\ge 1$,
we have $g_{b,c}\spos$.
Moreover, if $b$ is even,
then $X_{b,c}\spos$.
\end{proposition}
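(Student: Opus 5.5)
We plan to argue by induction on $b+c$ with $a$ fixed, using a recurrence in the legs $b,c$ rather than in $a$. The natural tool is \cref{lem:rec:clique-spider}/\cref{prop:KPG} applied to the leg of length $b$: viewing $S(a,b,c)$ as obtained by attaching a path to $S(a,0,c)$-type rooted graphs, the analogue of \cref{prop:rec.master} with the roles of $a$ and $b$ exchanged gives
\[
X_{b,c}=C_b(a,c)+\sum_{i=2}^{b-1}(i-1)e_iX_{b-i,c},
\]
with $C_b(a,c)\spos$ by \cref{prop:universal-core} (for $b\ge 2$). The base cases will be $b=1$, where $g_{1,c}\spos$ is the even-$a$ part of \cref{thm:threshold-engine} when $\delta_a(1,c)\ge0$, and $b=2$, where $X_{2,c}=C_2(a,c)\spos$, so $g_{2,c}=X_{2,c}+2\delta_a(1,c)e_1s_{(h-1)^21}$ needs separate treatment when $\delta_a(1,c)<0$. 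That is the first point to check; there we expect to use the critical-coefficient computation of \cref{prop:critical} together with the explicit negative term, possibly via \cref{lem:forced-cell-pieri}.

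For the inductive step we split by the parity of $b$. The smaller spiders $S(a,b-i,c)$ alternate between the two cases of $g$. Whenever $b-i$ is odd, the induction hypothesis gives $X_{b-i,c}\gs s_{(h'+1)h'}+\delta_a(b-i,c)s_{h'h'1}$ with $h'=h_{b-i,c}$, and here $\delta$ may be negative. Whenever $b-i$ is even, it gives $X_{b-i,c}\gs -2\delta_a(b-i-1,c)e_1s_{(h'-1)^21}$, a correction term that is also possibly negative. The key structural step is to pair the term $i$ with the term $i+1$, exactly as in \cref{lem:even-block}. We want the negative multiple of $e_{i+1}s_{h'h'1}$ coming from an odd neighbour to be absorbed by $e_i$ times the correction of an even neighbour, or conversely. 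We would combine \cref{delta.-1,delta.-2,h.-1,h.-2} to track the exact coefficients and use \cref{lem:forced-cell-pieri} (a forced cell in row $3$) to compare $e_{i}e_1 s_{(h-1)^21}$ with $e_{i+1}s_{hh1}$-type terms.

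The leftover boundary terms at the top of the recurrence ($i=0$ and $i=1$, that is, the target $-s_{(h+1)h}-\delta s_{hh1}$ or $+2\delta e_1 s_{(h-1)^21}$) must then be dominated using the positive clique-spider term $C_b(a,c)$. We would lower-bound it by \cref{lem:core-recurrence} as $e_2^{k}C_b(0,1)$ or $e_2^kC_b(1,0)$, and compute explicitly the coefficients of the few relevant shapes $s_{hh1}$, $s_{(h+1)h}$ and $s_{(h-1)^2 1^3}$, in the manner of \cref{lem:s+:C3}.

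The second assertion is then read off. For even $b$ we have $g_{b,c}=X_{b,c}+2\delta_a(b-1,c)e_1s_{(h-1)^21}$. When $\delta_a(b-1,c)\le 0$, we get $X_{b,c}\gs g_{b,c}\spos$ directly. Otherwise we rerun the recurrence, noting that all inputs are then Schur positive or have nonnegative $\delta$.

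The main obstacle is the pairing step. The negative coefficients $\delta_a(b-i,c)$ can be arbitrarily large in absolute value when $a\gg 5b+5c$. We therefore need a uniform inequality in which the weight $(i-1)$ on $e_i$ times the correction $2|\delta|e_1s_{(h-1)^21}$ dominates $i\,|\delta'|\,e_{i+1}s_{hh1}$. The factor $2$ in the definition of $g$ is presumably tuned for exactly this, and verifying it for small $i$ (especially $i=2,3$) is where we expect the delicate case analysis to lie.
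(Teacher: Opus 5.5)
There is a genuine gap in how you pay for the boundary terms. You assign them to $C_b(a,c)$, but $\mathcal C_b(a,c)=S^{1ac}_{b11}$ contains a clique $K_b$. So its chromatic symmetric function vanishes in $b-1$ variables, and $\Coeff{s_\lambda}{C_b(a,c)}=0$ whenever $\ell(\lambda)<b$. No lower bound for it, e.g.\ via \cref{lem:core-recurrence}, can help on such shapes. The terms you must pay live on very short shapes. For even $b$ you need $-2\delta_a(b-1,c)\,e_1 s_{(h-1)^21}$, supported on $(h,h-1,1)$, $(h-1,h-1,2)$, $(h-1,h-1,1,1)$. For odd $b$, the term $e_2X_{b-2,c}$ is unpaired, and setting it against the target leaves the deficit $4s_{hh1}-\delta_a(b-2,c)\bigl(e_2s_{(h-1)^21}-s_{hh1}\bigr)$. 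Here $h=h_{b,c}$ and $s_{hh1}$ has three rows. Hence for every $b\ge 4$ the clique-spider term contributes nothing at these coefficients, and your step fails. The $C_b$ route only works for $b\le 3$, which is exactly where the paper uses it: the base case $b=2$ and the case $p=1$ of \cref{lem:odd-payment}. The pairing you call the main obstacle is in fact immediate. Both members of a pair carry the same $\delta_a(q-1,c)$, and $2(i-1)e_{i1}-ie_{i+1}=2(i-1)s_{21^{i-1}}+(i-2)e_{i+1}\spos$.

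The missing idea is that the payments come from the surplus of the lowest pair, not from $C_b$. For even $b$, the pair $L_{2,b-2,c}$ contains $2\bigl(5-\delta_a(b-1,c)\bigr)s_{21}s_{(h-2)^21}$. You then need the Pieri comparison $s_{21}s_{(h-2)^21}\gs e_1s_{(h-1)^21}$ from \cref{lem:LR-difference}. For odd $b$, compare $g_{b,c}$ with $e_2g_{b-2,c}$ and pay the deficit out of $L_{3,b-3,c}$. The $\delta$-proportional part is paid via $s_{211}s_{(h-2)^21}\gs e_2s_{(h-1)^21}-s_{hh1}$. The constant $4s_{hh1}$ is paid by noting that $S(a,b-3,c)$ is a balanced bipartite graph on $2(h-1)$ vertices. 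Hence $\Coeff{s_{(h-1)^2}}{g_{b-3,c}}=2$, and so $2e_3g_{b-3,c}\gs 4e_3s_{(h-1)^2}\gs 4s_{hh1}$. Nothing in your plan supplies this.

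Your base cases also have holes. The case $b=1$ with $\delta_a(1,c)<0$ is not covered by \cref{thm:threshold-engine}. For $c\ge 3$ you need the symmetry $g_{1,c}=g_{c,1}$ to push it into the odd-$b$ step. For $c=1$ and $a>12$, $g_{1,1}$ needs a direct argument; the paper writes $X_{S(a,1,1)}=(X_{a+3}-e_2X_{a+1})+\Delta_{a+3}$ and invokes \cref{thm:path-bridge}. Finally, \cref{prop:critical} concerns spiders with exactly one even leg and says nothing about $S(a,2,c)$. There one uses $C_2(a,c)\gs 2e_2^{h-2}s_{22}$ and checks three explicit coefficients.
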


We first establish the initial cases and recurrence estimates,
then prove both assertions by simultaneous induction
near the end of the section.
\cref{lem:s+:Y1:Z2} supplies the initial cases
for the induction on the two varying leg lengths.
We will need the following identity given by \cref{prop:Pieri}:
\begin{equation}\label{e1Th}
e_1 T_h
=s_{(h+1)h1}+s_{hh2}+s_{hh11}.
\end{equation}

\begin{lemma}\label{lem:s+:Y1:Z2}
We have~$g_{1,1}\spos$,
$X_{2,c}\spos$,
and~$g_{2,c}\spos$ for any odd integer $c\ge 1$.
\end{lemma}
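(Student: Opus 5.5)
The three claims are of different difficulty, and I would prove them in the order $X_{2,c}$, $g_{1,1}$, $g_{2,c}$.

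\emph{The bound $X_{2,c}\spos$.} This follows from \cref{cor:propagation} with the fixed pair of legs $(2,c)$. The hypothesis is $S(1,2,c)\cong S(c,2,1)\spos$, which is the second assertion of \cref{thm:TW+}. Hence $S(a,2,c)\spos$ for every $a\ge 1$.

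\emph{The bound $g_{1,1}\spos$.} Here $N=a+3=2h+1$ and $\delta_a(1,1)=(12-a)/2=7-h$. When $\delta_a(1,1)\ge 0$, the claim is exactly the even-$a$ case of \cref{thm:threshold-engine}. In general I would write, by \cref{prop:3spider},
\[
X_{S(a,1,1)}=\bigl(X_N-e_2X_{N-2}\bigr)+\Delta_N .
\]
By \cref{thm:path-bridge}, $\Delta_N-B_h-(3-h)T_h\spos$, and therefore
\[
g_{1,1}\gs \bigl(X_N-e_2X_{N-2}\bigr)-4T_h .
\]
The expansion $X_N-e_2X_{N-2}=e_N+\sum_{i\ne 2}(i-1)e_iX_{N-i}$ from \cref{sec:preliminaries} consists of $e$-positive summands, and it contains the summand $2e_3X_{2h-2}$. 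Specializing to two variables gives $X_{2h-2}^{[2]}=2(x_1x_2)^{h-1}$, so $[s_{(h-1)^2}]X_{2h-2}=2$. Since paths are $e$-positive, we get $X_{2h-2}\gs 2R_{h-1}$. By \cref{prop:Pieri}, $e_3R_{h-1}\gs T_h$. Combining these gives $2e_3X_{2h-2}\gs 4T_h$, which proves $g_{1,1}\spos$ for all even $a$.

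\emph{The bound $g_{2,c}\spos$.} Since $b+c$ is odd, $g_{2,c}=X_{2,c}+2\delta_a(1,c)\,e_1 s_{(h-1)^21}$, where $N=2h$. If $\delta_a(1,c)\ge 0$, this follows from $X_{2,c}\spos$. Otherwise, by \cref{prop:Pieri},
\[
e_1s_{(h-1)^21}=s_{h(h-1)1}+s_{(h-1)^22}+s_{(h-1)^211},
\]
a multiplicity-free sum. So it suffices to show that each of these three Schur coefficients of $X_{S(a,2,c)}$ is at least $2\abs{\delta_a(1,c)}=a-5c-7$; all other coefficients are already nonnegative. I would compute these three coefficients with \cref{thm:s-coeff.XG}, in the same way as \cref{prop:critical}. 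The special ribbon tabloids of these near-balanced shapes are few, so each coefficient is a short alternating sum of counts $\widetilde a_\mu$ with $\mu$ close to $(h,h)$. These counts can be obtained by deleting one or two vertices and counting balanced $2$-colourings of the remaining forest via $\prod_j(t^{d_j}+t^{-d_j})$, classified by vertex position as in \cref{tab:critical-count-table}. I expect each coefficient to be an explicit linear function of $x=a/2$ and $c$ with slope at least $2$ in $a$, which would dominate $a-5c-7$.

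\emph{Main obstacle.} The difficult part is the bookkeeping for the shape $(h-1)^211$: it has more special ribbon tabloids, and the needed stable compositions have three or four blocks. If that becomes unwieldy, I would instead induct on $a$ through \cref{prop:rec.master}. There, $C_a(2,c)\spos$ by \cref{prop:universal-core}, and the terms $e_2X_{S(a-2,2,c)}$ and $2e_3X_{S(a-3,2,c)}$ would be estimated using the inductive bound on $g_{2,c}$ at smaller $a$. The hoped-for gain is that $e_2$ applied to the correction term at $a-2$ reproduces the needed shapes with a controlled loss, as in \cref{lem:even-block}.
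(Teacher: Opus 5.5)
Your proofs of $X_{2,c}\spos$ and $g_{1,1}\spos$ are correct. For $X_{2,c}$ you use \cref{cor:propagation} applied to $S(1,2,c)\cong S(c,2,1)$ together with \cref{thm:TW+}. For $g_{1,1}$ you locate $4T_h$ inside the summand $2e_3X_{2h-2}$ of $X_N-e_2X_{N-2}$. The paper instead computes $[T_h](X_N-e_2X_{N-2})=4$ from \cref{prop:critical,thm:path-bridge}.

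The gap is the case $\delta_a(1,c)<0$ of $g_{2,c}\spos$. You correctly reduce it to showing that the coefficients of $X_{2,c}$ at $s_{h(h-1)1}$, $s_{(h-1)^22}$ and $s_{(h-1)^21^2}$ are all at least $-2\delta_a(1,c)=a-5c-7$. You then only describe a computation via \cref{thm:s-coeff.XG} and say what you expect it to give; no bound is actually proved, and that bound is the entire content of this case.

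The expectation is also partly wrong. The first coefficient does come out linear: since $\alpha(G)=h$, it equals $\widetilde a_{h(h-1)1}-\widetilde a_{hh}=2a+2c+3$. The other two coefficients involve stable compositions with a $2$-block or two singleton blocks, so they are sums over pairs of deleted vertices. They are at least quadratic in $h$ (see below), so no linear formula can hold for them. The fallback induction through \cref{prop:rec.master} is only sketched.

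The missing idea is to prove $X_{2,c}\spos$ in quantitative form rather than via \cref{cor:propagation}, which discards exactly the information this case needs. Since $X_{2,c}=C_2(a,c)$ and $\mathcal C_2(0,1)\cong P_4$, \cref{lem:core-recurrence} gives
\[
X_{2,c}\gs e_2^{h-2}C_2(0,1)=e_2^{h-2}X_4\gs 2e_2^{h-2}s_{22}.
\]
By \cref{prop:Pieri}, the coefficients of $e_2^{h-2}s_{22}$ at the three shapes are $h-2$, $\binom{h-2}{2}$ and $\binom{h-1}{2}$. The assumption $\delta_a(1,c)\le -1$ forces $h\ge 9$, so each of these is at least $h-2$. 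Hence $X_{2,c}\gs 2(h-2)e_1T_{h-1}$. Since $\delta_a(1,c)=3c+5-h$, this yields $g_{2,c}\gs 6(c+1)e_1T_{h-1}\spos$.
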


\begin{proof}
Write~$a=2k$ and put $f_k=X_{1,1}-\Delta_{2k+3}$.
By \cref{prop:3spider,thm:TW+},
\[
f_k
=(X_{2k+3}+e_1 X_{2k+2}-X_2 X_{2k+1})
-(e_1 X_{2k+2}-e_2 X_{2k+1})
=X_{2k+3}-e_2 X_{2k+1}\epos.
\]
Note that $\delta_{2k}(1,1)=\ceil{(12-2k)/2}=6-k$.
By \cref{prop:critical,thm:path-bridge},
\[
\Coeff{T_{k+1}}{f_k}
=\delta_{2k}(1,1)-\brk1{3-(k+1)}
=4.
\]
On the other hand, by \cref{thm:path-bridge},
$\Delta_{2k+3}+(k-2)T_{k+1}-B_{k+1}\spos$.
As a result,
\[
g_{1,1}
=X_{1,1}
-B_{k+1}
-\delta_{2k}(1,1)T_{k+1}
=(f_k-4T_{k+1})
+(\Delta_{2k+3}+(k-2)T_{k+1}-B_{k+1})\spos.
\]

Consider~$X_{2,c}$.
Then
\[
h
=(a+c+3)/2
\ge 3
\quad\text{and}\quad
\delta_a(1,c)
=(5c+7-a)/2
=3c+5-h.
\]
By \cref{lem:core-recurrence}, for $t\ge 2$, 
$C_t(a,c)
\gs e_2^{h-2}C_t(0,1)$.
Since~$\mathcal C_2(0,1)\cong P_4$,
taking~$t=2$ gives
\[
X_{2,c}
=C_2(a,c)
\gs e_2^{h-2}X_4
\gs 2s_{22}e_2^{h-2}\spos.
\]
If~$\delta_a(1,c)\ge 0$, then $g_{2,c}
=X_{2,c}+2\delta_a(1,c) e_1 T_{h-1}\spos$.
It remains to consider~$\delta_a(1,c)\le -1$.

In view of \cref{e1Th},
we consider three Schur coefficients in $e_2^{h-2}s_{22}$:
\[
\Coeff{s_{h(h-1)1}}{e_2^{h-2}s_{22}}
=h-2,
\qquad
\Coeff{s_{(h-1)^22}}{e_2^{h-2}s_{22}}
=\binom{h-2}{2},
\qquad
\Coeff{s_{(h-1)^21^2}}{e_2^{h-2}s_{22}}
=\binom{h-1}{2}.
\]
Since~$\delta_a(1,c)\le -1$,
we find~$h\ge 9$ and all three coefficients above are at least~$h-2$.
Therefore,
\begin{align*}
g_{2,c}
&=X_{2,c}+2\delta_a(1,c) e_1 T_{h-1}
\gs 2 s_{22}e_2^{h-2}-2(h-3c-5) e_1 T_{h-1}
\\
&\gs 2(h-2) e_1 T_{h-1}-2(h-3c-5) e_1 T_{h-1}
=6(c+1) e_1 T_{h-1}
\spos.
\qedhere
\end{align*}
\end{proof}

The even-leg induction will also require a comparison between products
involving~$T_{h-1}$ and~$T_h$.
We establish it directly from \cref{prop:Pieri}.

\begin{lemma}\label{lem:LR-difference}
For any integer~$h\ge 3$,
we have~$s_{21}T_{h-1}\gs e_1 T_h$.
\end{lemma}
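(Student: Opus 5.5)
The plan is a direct Littlewood--Richardson comparison. By \cref{e1Th} the right-hand side is multiplicity free:
\[
e_1 T_h=s_{(h+1)h1}+s_{hh2}+s_{hh11}.
\]
So it suffices to show that each of these three Schur functions occurs in $s_{21}T_{h-1}=s_{21}s_{(h-1)(h-1)1}$ with coefficient at least $1$. The product of Schur functions is Schur positive by the Littlewood--Richardson rule, so any other constituents of $s_{21}T_{h-1}$ are harmless. For each target $\mu$ I will therefore exhibit one Littlewood--Richardson tableau of shape $\mu/\lambda$ and content $(2,1)$, where $\lambda=(h-1,h-1,1)$.

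I will record positions as (row, column), with row $1$ the longest row. A Littlewood--Richardson tableau is a semistandard filling whose reverse reading word, read right to left along row $1$, then row $2$, and so on, is a lattice word. The three fillings are as follows.
\begin{enumerate}
\item For $\mu=(h+1,h,1)$, the skew shape consists of cells $(1,h)$, $(1,h+1)$ and $(2,h)$. Fill both cells of row $1$ with $1$ and cell $(2,h)$ with $2$. Columns strictly increase because $(2,h)$ lies over $(1,h)$, and the reading word is $1,1,2$.
\item For $\mu=(h,h,2)$, the skew shape consists of cells $(1,h)$, $(2,h)$ and $(3,2)$. Fill them with $1$, $2$ and $1$ respectively. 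The reading word is $1,2,1$, and cell $(3,2)$ has no skew cell below it in its column.
\item For $\mu=(h,h,1,1)$, the skew shape consists of cells $(1,h)$, $(2,h)$ and $(4,1)$, filled with $1$, $2$ and $1$. Again the reading word is $1,2,1$.
\end{enumerate}

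The hypothesis $h\ge 3$ enters in two places. It makes $\lambda=(h-1,h-1,1)$ a genuine partition with $h-1\ge 2>1=\lambda_3$. In case~(2) this ensures that cell $(3,2)$ is outside $\lambda$, is supported by $(3,1)\in\lambda$, and shares no column with the stacked cells in column $h$, so the strip is well formed and the column conditions hold. It also guarantees that all three targets $\mu$ are partitions containing $\lambda$.

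With all three Littlewood--Richardson coefficients at least $1$, we get $s_{21}T_{h-1}-e_1T_h\spos$. The only real work, and hence the main obstacle, is checking the semistandard and lattice conditions in case~(2), where the new cells lie in different columns. This is routine.
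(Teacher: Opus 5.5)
Your proof is correct and follows the same plan as the paper: show that each of the three constituents $s_{(h+1)h1}$, $s_{hh2}$, $s_{hh11}$ of $e_1T_h$ occurs in the Schur-positive product $s_{21}T_{h-1}$. Your three Littlewood--Richardson tableaux are valid, including your use of $h\ge 3$ to keep cell $(3,2)$ out of column $h$, and the only difference is that the paper obtains these coefficients (each exactly $1$) by writing $s_{21}=e_{21}-e_3$ and applying Pieri's rule twice, rather than reading them off from Littlewood--Richardson tableaux.
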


\begin{proof}
Since~$s_{21}=e_{21}-e_3$,
we compute $[s_\lambda]e_{21}T_{h-1}$ and $[s_\lambda]e_3 T_{h-1}$
for each of the three shapes $\lambda\in\{(h+1)h1,\,hh2,\,hh11\}$,
whose difference gives $[s_\lambda]s_{21}T_{h-1}$.
Two applications of \cref{prop:Pieri} give the three pairs of coefficients
in \cref{tab:LR-difference-pieri}.
\begin{table}[htbp]
\centering
\caption{Coefficients used to compare~$s_{21}T_{h-1}$ and~$e_1 T_h$ for~$h\ge 3$.}
\label{tab:LR-difference-pieri}
\setlength{\tabcolsep}{12pt}
\renewcommand{\arraystretch}{1.2}
\begin{tabular}{*{4}{>{$}c<{$}}}
\toprule
\lambda & \Coeff{s_\lambda}{e_{21}T_{h-1}} & \Coeff{s_\lambda}{e_3 T_{h-1}} & \Coeff{s_\lambda}{s_{21}T_{h-1}}
\\
\midrule
(h+1,\,h,\,1) & 1 & 0 & 1
\\
(h,h,2) & 2 & 1 & 1
\\
(h,h,1,1) & 2 & 1 & 1
\\
\bottomrule
\end{tabular}
\end{table}
In view of the last column and \cref{e1Th},
each constituent of~$e_1 T_h$ therefore occurs with coefficient~$1$ in~$s_{21}T_{h-1}$.
This completes the proof.
\end{proof}

In view of \cref{prop:rec.master},
we consider the sum
\[
L_{i,q,c}
=(i-1) e_i X_{q,c}+i e_{i+1}X_{q-1,\,c}
\]
of two adjacent terms.
Let $h_{q,c}=(a+q+c+1)/2$.
By \cref{h.-1},
\begin{align}
\notag
L_{i,q,c}
&=(i-1) e_i\brk1{g_{q,c}-2\delta_a(q-1,\,c) e_1 T_{h_{q,c}-1}}
+i e_{i+1}(g_{q-1,\,c}
+B_{h_{q-1,c}}
+\delta_a(q-1,\,c)T_{h_{q-1,c}})
\\
\label{eq:block-decomposition}
&=(i-1) e_i g_{q,c}+i e_{i+1}g_{q-1,\,c}
+i e_{i+1}B_{h_{q-1,\,c}}
-\delta_a(q-1,\,c)\brk1{2(i-1) e_{i1}-i e_{i+1}}T_{h_{q-1,\,c}}.
\end{align}

This decomposition gives Schur positivity of a pair of recurrence terms
under the hypotheses available in the induction.

\begin{lemma}\label{lem:adjacent-block}
Let~$q\ge 2$ be even and~$c\ge 1$ be odd.
Suppose that
\begin{enumerate}
\item
if $\delta_a(q-1,\,c)\ge 0$, then $X_{q,c}\spos$,
and
\item
if $\delta_a(q-1,\,c)<0$,
then $g_{q,c}\spos$ and $g_{q-1,\,c}\spos$.
\end{enumerate}
Then~$L_{i,q,c}\spos$ for any~$i\ge 2$.
\end{lemma}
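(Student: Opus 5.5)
The plan is to split according to the sign of $\delta=\delta_a(q-1,\,c)$, matching the two hypotheses of the lemma. Throughout, $q$ is even and $c$ is odd, so $q+c$ is odd and $(q-1)+c$ is even. By \cref{h.-1} this gives $h_{q-1,\,c}=h_{q,c}-1$. It also fixes which branch of the definition of $g$ applies to each of $g_{q,c}$ and $g_{q-1,\,c}$, and these are the branches used in \cref{eq:block-decomposition}.

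\emph{Case $\delta\ge 0$.} Hypothesis (1) gives $X_{q,c}\spos$. The spider $S(a,q-1,c)$ has one even leg $a$ and two odd legs $q-1$ and $c$, and we have $\delta_a(q-1,\,c)=\delta\ge 0$. So the even-$a$ assertion of \cref{thm:threshold-engine} applies. Equivalently, the bound $g_{q-1,\,c}\spos$ for odd $q-1,c$ with $\delta\ge 0$ noted before \cref{prop:general-even-bounds} applies. It yields
\[
X_{q-1,\,c}\gs B_{h_{q-1,c}}+\delta\, T_{h_{q-1,c}}\spos .
\]
Both summands of $L_{i,q,c}$ are therefore nonnegative multiples of an $e$-positive function times a Schur-positive function. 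By \cref{prop:Pieri} (or the Littlewood--Richardson rule), $L_{i,q,c}\spos$ for every $i\ge 2$.

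\emph{Case $\delta<0$.} Here I will use the decomposition \cref{eq:block-decomposition}. The terms $(i-1)e_i g_{q,c}$ and $i e_{i+1} g_{q-1,\,c}$ are Schur positive by hypothesis (2) and \cref{prop:Pieri}. The term $i e_{i+1} B_{h_{q-1,c}}$ is Schur positive by \cref{prop:Pieri}. It remains to treat
\[
-\delta\bigl(2(i-1)e_{i1}-i e_{i+1}\bigr)T_{h_{q-1,c}} .
\]
Since $-\delta>0$, it suffices to show that $2(i-1)e_ie_1-ie_{i+1}$ is Schur positive. By \cref{prop:Pieri}, adding one cell to the column $1^i$ gives $e_ie_1=e_{i+1}+s_{21^{i-1}}$. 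Hence
\[
2(i-1)e_{i1}-i e_{i+1}=(i-2)e_{i+1}+2(i-1)s_{21^{i-1}},
\]
which is Schur positive for every $i\ge 2$. Its product with the Schur function $T_{h_{q-1,c}}$ is then Schur positive by the Littlewood--Richardson rule. So all four summands are Schur positive, and $L_{i,q,c}\spos$.

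The only delicate point is the bookkeeping behind \cref{eq:block-decomposition}. One must check that the correction term $2\delta\, e_1 T_{h_{q,c}-1}$ inside $g_{q,c}$ uses the index $h_{q,c}-1=h_{q-1,\,c}$, so that it combines with the $\delta T_{h_{q-1,c}}$ term coming from $g_{q-1,\,c}$. This holds by \cref{h.-1}. Once that is verified, the sign analysis above is routine, and I expect no further obstacle.
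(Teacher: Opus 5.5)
Your proposal is correct and follows the paper's proof essentially verbatim. For $\delta_a(q-1,\,c)\ge 0$ you combine hypothesis (1) with the even-$a$ case of \cref{thm:threshold-engine}, and for $\delta_a(q-1,\,c)<0$ you use \cref{eq:block-decomposition} with the Pieri identity $2(i-1)e_{i1}-ie_{i+1}=2(i-1)s_{21^{i-1}}+(i-2)e_{i+1}$; your check via \cref{h.-1} that $h_{q,c}-1=h_{q-1,\,c}$ is exactly the bookkeeping that decomposition relies on.
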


\begin{proof}
If $\delta_a(q-1,\,c)\ge 0$, then $X_{q,c}\spos$ by hypothesis,
and~$X_{q-1,\,c}\spos$ by \cref{thm:threshold-engine}.
Thus $L_{i,q,c}\spos$ by definition.
If $\delta_a(q-1,\,c)<0$,
then \cref{prop:Pieri} implies
\[
2(i-1) e_{i1}-i e_{i+1}
=2(i-1) s_{21^{i-1}}+(i-2) e_{i+1}\spos.
\]
Hence \cref{eq:block-decomposition} gives $L_{i,q,c}\spos$.
\end{proof}

Put $D_{p,c}=X_{p+2,\,c}-e_2 X_{p,c}$.
Define $E_h=e_2 T_h-T_{h+1}$.
Then $E_h\spos$ by \cref{ineq:e2T>T} with~$h$ replaced by~$h+1$.
The next estimate shows that~$D_{p,c}$ contains enough Schur-positive terms
to compensate for the change
in the correction defining~$g_{p,c}$.

\begin{lemma}\label{lem:odd-payment}
Let~$c,p\ge 1$ be odd.
If~$p\ge 3$,
assume additionally that
\[
\delta_a(p-2,\,c)
\le 0,
\qquad g_{p-1,\,c}\spos,
\qquad g_{p-2,\,c}\spos,
\qquad D_{p,c}
\gs L_{3,\,p-1,\,c}.
\]
Put~$h=h_{p,c}$.
Then
\[
D_{p,c}
\gs 4T_{h+1}+
\begin{dcases*}
4(h-1)E_h, & if $p=1$,
\\
-4\delta_a(p-2,\,c)E_h, & if $p\ge 3$.
\end{dcases*}
\]
Consequently, $D_{p,c}\gs 4T_{h+1}+(20-4\delta_a(p,c))E_h$.
\end{lemma}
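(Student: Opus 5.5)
The plan is to expand $X_{p+2,c}=X_{S(p+2,\,a,\,c)}$ by \cref{prop:rec.master}, taking the leg of length $p+2$ as the distinguished leg. This gives
\[
X_{p+2,c}=C_{p+2}(a,c)+\sum_{i=2}^{p+1}(i-1)e_iX_{p+2-i,\,c},
\]
so the $i=2$ term cancels $e_2X_{p,c}$ and
\[
D_{p,c}=C_{p+2}(a,c)+\sum_{i=3}^{p+1}(i-1)e_iX_{p+2-i,\,c}.
\]
Since $a$ is even and $p,c$ are odd, the degree of $X_{p,c}$ is $N=2h+1$ with $h=h_{p,c}$. Both $T_{h+1}$ and $E_h=e_2T_h-T_{h+1}$ then live in degree $2h+3$, which is the degree of $D_{p,c}$.

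\emph{Case $p=1$.} Here the sum is empty and $D_{1,c}=C_3(a,c)$. By \cref{lem:core-recurrence}, with $a+c$ odd,
\[
C_3(a,c)\gs e_2^{(a+c-1)/2}C_3(1,0).
\]
I would compute $C_3(1,0)$ explicitly; it is a $5$-vertex clique-spider, so this is a finite Pieri computation. I would then iterate \cref{prop:Pieri} for $e_2^{m}$, adding vertical $2$-strips to the first two rows as in \cref{lem:s+:C3,lem:s+:Y1:Z2}. This lower-bounds the coefficients at $T_{h+1}$ and at the four constituents of $E_h$, namely
\[
s_{(h+1)h2},\quad s_{(h+1)h11},\quad s_{hh21},\quad s_{hh111}.
\]
These constituents follow from \cref{prop:Pieri} applied to $e_2 s_{hh1}$ after removing $s_{(h+1)(h+1)1}$. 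The coefficients grow linearly or quadratically in $m=h-O(1)$, as in the computation for $g_{2,c}$, so they should dominate $4(h-1)$ on each shape of $E_h$ and $4$ on $T_{h+1}$. Since $E_h$ is multiplicity-free, it suffices to compare coefficient by coefficient.

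\emph{Case $p\ge3$.} Here I use the hypothesis $D_{p,c}\gs L_{3,p-1,c}$ together with \cref{eq:block-decomposition} at $(i,q)=(3,p-1)$. Noting $h_{p-2,c}=h-1$ by \cref{h.-2}, this gives
\[
L_{3,p-1,c}=2e_3g_{p-1,c}+3e_4g_{p-2,c}+3e_4B_{h-1}-\delta_a(p-2,c)\bigl(4e_{31}-3e_4\bigr)T_{h-1}.
\]
The $g$-terms are Schur positive by hypothesis. Since $-\delta_a(p-2,c)\ge0$, the key inequality is
\[
4e_{31}T_{h-1}-3e_4T_{h-1}\gs 4E_h.
\]
I would prove it with a coefficient table in the style of \cref{lem:LR-difference}, computing $[s_\lambda]e_{31}T_{h-1}$ and $[s_\lambda]e_4T_{h-1}$ for each of the four shapes $\lambda$ of $E_h$.

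The extra $4T_{h+1}$ cannot come from $e_4B_{h-1}$, since a vertical strip cannot add two cells to row~$2$. I would instead obtain it from the other Schur-positive leftovers. The first candidate is $2e_3g_{p-1,c}$: the term $X_{p-1,c}$ hidden in $g_{p-1,c}$ has a large coefficient at shapes like $s_{h(h-1)1}$ and $s_{hh}$, which $e_3$ lifts to $T_{h+1}$. If that fails, I would fall back on $C_{p+2}(a,c)$ via \cref{lem:core-recurrence}, which requires using a bit more than the bare hypothesis $D_{p,c}\gs L_{3,p-1,c}$.

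\emph{Consequence.} By \cref{delta.-2}, $\delta_a(p,c)=\delta_a(p-2,c)+5$, so for $p\ge3$ we get $20-4\delta_a(p,c)=-4\delta_a(p-2,c)$ exactly. For $p=1$, writing $a=2h-c-1$ gives $\delta_a(1,c)=3c+4-h$. Hence
\[
20-4\delta_a(1,c)=4h+4-12c\le4(h-1),
\]
and since $E_h\spos$, the claimed bound follows.

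I expect the main obstacle to be the two explicit Pieri coefficient comparisons: the table for $(4e_{31}-3e_4)T_{h-1}$ against $4E_h$ (a single bad shape breaks it), and securing the $4T_{h+1}$ term in the $p\ge3$ case.
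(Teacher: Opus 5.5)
Your $p=1$ case matches the paper. So does your bound $(4e_{31}-3e_4)T_{h-1}\gs 4E_h$, which is true because $4e_{31}-3e_4=4s_{211}+e_4$ and $s_{211}T_{h-1}$ has coefficients $1,1,2,1$ on the four constituents of $E_h$. Your derivation of the final consequence also matches.

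For $p=1$, note that there is no slack. Only the term $4s_{221}$ of $C_3(0,1)=5e_5+3e_{41}+4e_{32}$ can reach three-row shapes, and there are exactly $h-1$ Pieri chains from $(2,2,1)$ to $(h+1,h,2)$. That count therefore has to be exact, not asymptotic.

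The genuine gap is the term $4T_{h+1}$ when $p\ge 3$. You leave it open, and your sketch does not work as written:
\begin{itemize}
\item By \cref{prop:Pieri}, the only $\mu\vdash 2h$ for which $(h+1,h+1,1)/\mu$ is a vertical $3$-strip is $\mu=(h,h)$. So $e_3s_{h(h-1)1}$ never contains $T_{h+1}$.
\item You cannot read coefficients of $X_{p-1,c}$ off $g_{p-1,c}=X_{p-1,c}+2\delta_a(p-2,c)e_1T_{h-1}$ without checking the correction. Here the correction is nonpositive and contains $s_{h(h-1)1}$.
\item Your fallback fails outright. $\mathcal C_{p+2}(a,c)$ contains $K_{p+2}$, so its chromatic symmetric function vanishes in $p+1$ variables. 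Hence every Schur constituent of $C_{p+2}(a,c)$ has at least $p+2\ge 5$ rows, and it can never supply the three-row shape $T_{h+1}$, whatever extra hypotheses you add.
\end{itemize}

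The missing idea is an exact two-row coefficient.
\begin{itemize}
\item Since $a$ and $p-1$ are even and $c$ is odd, $S(p-1,a,c)$ is a connected bipartite graph with both colour classes of size $h_{p-1,c}=h$. Thus $X_{p-1,c}^{[2]}=2(x_1x_2)^h$ and $\Coeff{R_h}{X_{p-1,c}}=2$.
\item Every constituent of $e_1T_{h-1}$ has at least three rows, so also $\Coeff{R_h}{g_{p-1,c}}=2$.
\item The hypothesis $g_{p-1,c}\spos$ then gives $g_{p-1,c}\gs 2R_h$.
\item Hence $2e_3g_{p-1,c}\gs 4e_3R_h\gs 4T_{h+1}$.
\end{itemize}
Combining this with your $E_h$ bound and the Schur positivity of $3e_4g_{p-2,c}+3e_4B_{h-1}$ yields the claim. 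The coefficient is exactly $2$, not ``large'': it provides precisely the required $4T_{h+1}$.
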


\begin{proof}
Let~$h=h_{p,c}$.
Suppose that~$p=1$.
Then $h=(a+c+1)/2\ge 2$.
It is direct to compute that
\[
C_3(0,1)
=5e_5+3e_{41}+4e_{32}
\gs 4s_{221}.
\]
By \cref{prop:rec.master,lem:core-recurrence},
\[
D_{1,c}
=X_{3,c}-e_2 X_{1,c}
=C_3(a,c)
\gs e_2^{h-1}C_3(0,1)
\gs 4e_2^{h-1}s_{221}.
\]
We claim that each Schur constituent of
\[
E_h
=e_2 T_h-T_{h+1}
=s_{(h+1)h2}
+s_{(h+1)h11}
+s_{hh21}
+s_{hh111}
\]
occurs at least~$h-1$ times in the Schur expansion of~$e_2^{h-1}s_{221}$.
In fact,
start with~$(2,2,1)$ and normally add to rows~$1,2$;
at one chosen step add instead to rows~$1,3$;
$1,4$;
$3,4$;
or~$4,5$, respectively.
Adding to rows~$1,2$ at every step gives~$T_{h+1}$.
This proves the claim.
Consequently,
\[
D_{1,c}
\gs 4e_2^{h-1}s_{221}
\gs 4T_{h+1}+4(h-1)E_h.
\]

Now let~$p\ge 3$.
The assumption and \cref{delta.-2} give
\[
5-\delta_a(p,c)
=-\delta_a(p-2,\,c)
\ge 0.
\]
Note that $4e_{31}-3e_4=4s_{211}+e_4$.
By the hypotheses and \cref{eq:block-decomposition,h.-2,delta.-2},
\begin{align*}
D_{p,c}
\gs L_{3,\,p-1,\,c}
&=2e_3 g_{p-1,\,c}
+3e_4 g_{p-2,\,c}
+3e_4 B_{h-1}
-\delta_a(p-2,\,c)(4e_{31}-3e_4)T_{h-1}
\\
&\gs 2e_3 g_{p-1,\,c}
-4\delta_a(p-2,\,c) s_{211}T_{h-1}.
\end{align*}

The bipartite graph~$S(p-1,\,a,\,c)$ has both color classes
of size~$h_{p-1,\,c}=h$ by \cref{h.-1}.
Thus
\[
X_{p-1,\,c}^{[2]}
=2(x_1x_2)^h
=2s_{hh}(x_1,x_2),
\]
which implies $\Coeff{R_h}{X_{p-1,\,c}}=2$.
It follows that
\[
\Coeff{R_h}{g_{p-1,\,c}}
=\Coeff{R_h}{\brk1{X_{p-1,\,c}
+2\delta_a(p-2,\,c) e_1 T_{h-1}}}
=\Coeff{R_h}{X_{p-1,\,c}}
=2.
\]
Since $g_{p-1,\,c}\spos$ by hypothesis, we find $g_{p-1,\,c}\gs 2R_h$.
Thus by \cref{prop:Pieri},
\[
2e_3 g_{p-1,\,c}
\gs 4e_3 R_h
\gs 4T_{h+1}.
\]
Two applications of \cref{prop:Pieri} give the coefficients
in the first two columns in \cref{tab:odd-payment-pieri}.
The last column is their difference since $s_{211}=e_{31}-e_4$.
Since every entry in the last column is at least $1$,
we obtain $s_{211}T_{h-1}\gs E_h$.
Since $\delta_a(p-2,\,c)\le 0$,
combining these bounds gives
\[
D_{p,c}
\gs 4T_{h+1}-4\delta_a(p-2,\,c) s_{211}T_{h-1}
\gs 4T_{h+1}-4\delta_a(p-2,\,c)E_h.
\]

\begin{table}[htbp]
\centering
\caption{Coefficients used to bound~$s_{211}T_{h-1}$ for~$h\ge 3$.}
\label{tab:odd-payment-pieri}
\setlength{\tabcolsep}{12pt}
\renewcommand{\arraystretch}{1.2}
\begin{tabular}{>{$}l<{$}*{3}{>{$}c<{$}}}
\toprule
\lambda & \Coeff{s_\lambda}{e_{31}T_{h-1}} & \Coeff{s_\lambda}{e_4 T_{h-1}} & \Coeff{s_\lambda}{s_{211}T_{h-1}}
\\
\midrule
(h+1,\,h,\,2) & 1 & 0 & 1
\\
(h+1,\,h,\,1,\,1) & 1 & 0 & 1
\\
(h,h,2,1) & 3 & 1 & 2
\\
(h,h,1,1,1) & 2 & 1 & 1
\\
\bottomrule
\end{tabular}
\end{table}

For the uniform bound,
observe that
\begin{align*}
4(h-1)-(20-4\delta_a(1,c))
=12c-8
\ge 4,
&\quad\text{when~$p=1$, whereas}
\\
-4\delta_a(p-2,\,c)
=20-4\delta_a(p,c),
&\quad\text{when~$p\ge 3$}.
\end{align*}
Since~$E_h\spos$,
these comparisons give the claimed uniform bound.
\end{proof}

We now use the initial cases and the preceding estimates
to prove both assertions of \cref{prop:general-even-bounds} simultaneously.

\begin{proof}[Proof of \cref{prop:general-even-bounds}]
We proceed by induction on $b+c$,
simultaneously for all integers~$b\ge 1$ and odd~$c\ge 1$.
The base cases $(b,c)=(1,1)$ and~$b=2$ are supplied by \cref{lem:s+:Y1:Z2}.

First suppose that~$b$ is odd.
If $\delta_a(b,c)\ge 0$,
then $g_{b,c}=f_a(b,c)\spos$ by \cref{thm:threshold-engine}.
We may therefore assume that $\delta_a(b,c)<0$.
Outside the base case $(1,1)$,
at least one of~$b$ and $c$ is at least~$3$.
By the symmetry~$g_{b,c}=g_{c,b}$,
we may therefore assume that~$b\ge 3$.
By \cref{prop:rec.master},
\begin{align*}
D_{b-2,\,c}-C_b(a,c)
&=X_{b,c}-e_2 X_{b-2,\,c}-C_b(a,c)
=\sum_{i=3}^{b-1}(i-1) e_i X_{b-i,\,c}
\\
&=\sum_{3\le i\le b-2,\ \text{$i$ odd}}
\bigl((i-1) e_i X_{b-i,\,c}
+i e_{i+1}X_{b-i-1,\,c}\bigr)
=\sum_{3\le i\le b-2,\ \text{$i$ odd}}
L_{i,\,b-i,\,c}.
\end{align*}
By \cref{prop:universal-core}, we know $C_b(a,c)\spos$.
By induction and by \cref{lem:adjacent-block},
we have $L_{i,\,b-i,\,c}\spos$ for each $i$.
In order to apply \cref{lem:odd-payment} for the pair $(b-2,\,c)$,
we check its conditions.
Then $\delta_a(b-2,\,c)=\delta_a(b,c)-5\le -6$.
When $b\ge 5$,
induction gives $g_{b-3,\,c}\spos$ and $g_{b-4,\,c}\spos$.
Moreover,
\[
\delta_a(b-4,\,c)
=\delta_a(b,c)-10
\le -11.
\]
The sum contains $L_{3,\,b-3,\,c}$,
so $D_{b-2,c}\gs L_{3,b-3,c}$.
Thus all hypotheses of \cref{lem:odd-payment} hold for $p=b-2$.
For $b=3$,
its $p=1$ case applies directly.
Let~$h=h_{b-2,\,c}$.
By \cref{ineq:e2B>B}, $e_2 B_h-B_{h+1}\gs T_{h+1}$.
Applying \cref{lem:odd-payment} with $p=b-2$,
and writing $d=\delta_a(b-2,c)\le -6$,
we obtain
\begin{align*}
g_{b,c}-e_2 g_{b-2,\,c}
&=X_{b,c}
-B_{h_{b,c}}
-\delta_a(b,c)T_{h_{b,c}}
-e_2\brk1{
X_{b-2,\,c}
-B_{h_{b-2,\,c}}
-d T_{h_{b-2,\,c}}
}
\\
&=D_{b-2,\,c}
+e_2 B_h
-B_{h+1}
-(d+5)T_{h+1}
+d e_2 T_h
\\
&\gs D_{b-2,\,c}
+d E_h
-4T_{h+1}
\gs (20-3d)E_h\spos.
\end{align*}
Since~$g_{b-2,\,c}\spos$ by induction, we obtain~$g_{b,c}\spos$.

It remains to consider even~$b\ge 4$.
By \cref{prop:rec.master},
\[
X_{b,c}-C_b(a,c)
=\sum_{2\le i\le b-2,\text{ $i$ even}}
\bigl((i-1) e_i X_{b-i,\,c}
+i e_{i+1}X_{b-i-1,\,c}\bigr)
=\sum_{2\le i\le b-2,\text{ $i$ even}}
L_{i,\,b-i,\,c}.
\]
By \cref{prop:universal-core,lem:adjacent-block} and induction,
we find
\[
X_{b,c}\spos
\quad\text{and}\quad
X_{b,c}
\gs L_{2,\,b-2,\,c}.
\]
If $\delta_a(b-1,\,c)\ge 0$,
then
\[
g_{b,c}
=X_{b,c}+2\delta_a(b-1,\,c) e_1 T_{h_{b,c}-1}
\spos,
\]
which completes this case.
Otherwise $\delta_a(b-1,\,c)<0$;
put~$h=h_{b,c}$.
By \cref{eq:block-decomposition,h.-1,h.-2,delta.-2},
\begin{align*}
L_{2,\,b-2,\,c}
&=e_2 g_{b-2,c}
+2e_3 g_{b-3,\,c}
+2e_3 B_{h_{b-3,\,c}}
-\delta_a(b-3,\,c)
(2e_{21}-2e_3)T_{h_{b-3,\,c}}
\\
&=e_2 g_{b-2,c}
+2e_3 g_{b-3,\,c}
+2e_3 B_{h-2}
-2(\delta_a(b-1,\,c)-5)
s_{21}T_{h-2}.
\end{align*}
Thus by induction and by \cref{lem:LR-difference},
\begin{align*}
g_{b,c}
&=X_{b,c}+2\delta_a(b-1,\,c) e_1 T_{h-1}
\gs L_{2,\,b-2,\,c}+2\delta_a(b-1,\,c) e_1 T_{h-1}
\\
&\gs -2\delta_a(b-1,\,c)(s_{21}T_{h-2}-e_1 T_{h-1})\spos.
\end{align*}
This completes the induction.
\end{proof}

We conclude by proving \cref{thm:classification,thm:refined-bounds}.

\begin{proof}[Proof of \cref{thm:classification}]
Let~$G=S(a,b,c)$.
We prove the three assertions in order.
\begin{enumerate}[label=\textup{(\roman*)}]
\item
If~$a,b,c$ are all odd,
then~$G$ is not Schur positive by \cref{thm:s+.bipartite}.

\item
Suppose~$a$ is even and~$b,c$ are odd.
Put $h=(a+b+c)/2$.
By \cref{prop:critical},
\[
\Coeff{s_{hh1}}{X_G}
=\delta_a(b,c)
=(5b+5c+2-a)/2.
\]
Thus Schur positivity requires $a\le 5b+5c+2$.
Conversely, if $a\le 5b+5c+2$,
then~$\delta_a(b,c)\ge 0$.
By \cref{thm:threshold-engine},
\[
X_G-s_{(h+1)h}-\delta_a(b,c) s_{hh1}\spos.
\]
Adding the two Schur-positive terms back gives~$X_G\spos$.
This proves the equivalence.

\item
If at least two legs are even,
we may suppose that~$a,b$ are even.
By \cref{prop:general-even-bounds}, $X_{b,1}=X_{S(a,b,1)}\spos$.
Then \cref{cor:propagation} gives~$G=S(a,b,c)\spos$ for every~$c\ge 1$.
\qedhere
\end{enumerate}
\end{proof}

\begin{proof}[Proof of \cref{thm:refined-bounds}]
Let $G=S(a,b,c)$.
When $a$ is odd,
the two bipartition classes of $G$ have sizes $h-1$ and $h+1$.
Since $G$ is connected,
its proper colorings with colors $1$ and $2$ give
\[
X_G^{[2]}
=(x_1x_2)^{h-1}(x_1^2+x_2^2)
=s_{(h+1)(h-1)}^{[2]}-s_{hh}^{[2]}.
\]
The linear independence of the two-variable Schur polynomials
therefore gives $\Coeff{s_{hh}}{X_G}=-1$.
This is also proved in the proof of \cref{thm:s+.bipartite} by \citet{WW20}.
By \cref{thm:threshold-engine},
every other Schur coefficient of $X_G$ is nonnegative.

When $a$ is even, \cref{prop:general-even-bounds} gives~$g_{b,c}\spos$,
that is, $X_G-B_h-\delta_a(b,c)T_h\spos$.
By \cref{prop:critical},
$[T_h]X_G=\delta_a(b,c)$ is the only possible negative Schur coefficient.
Since $G$ fails Schur positivity by hypothesis,
this coefficient is strictly negative.
\end{proof}

\section{Disclosure of artificial intelligence use}\label{sec:ai-use}

We used ChatGPT~6 Pro through its web interface and OpenAI Codex.

AI assistance in developing proof strategies was essential
to completing the proof of \cref{thm:classification}.
The authors formulated the classification problem and guided the search
for a proof.
Despite the substantial effort devoted to establishing
the Schur-positivity classification results for clique-spiders
in \citet{WW26X},
we did not expect a complete classification of Schur-positive
$3$-legged spiders to be within reach.
To our surprise,
AI proposed combining induction with the $3$-clique-spider results
recalled in \cref{thm:s+:clique-spiders} to obtain such a classification.

AI supplied the overall strategy of simultaneous induction
and was the first to resolve the main obstacles to proving the classification.
The approach strengthened the desired positivity conclusions
by introducing correction terms,
so that the recurrences allowed positive contributions in one parity case
to offset negative contributions in another.
The authors studied this approach and verified that it established
the claimed results.
They then guided AI in deriving \cref{thm:refined-bounds}.

The authors subsequently proposed simpler proofs of several local lemmas
using combinatorial interpretations of Schur functions
and applications of Pieri's rule.
They also made numerous suggestions for restating and reorganizing
the initially unclear and scattered lemmas
into a coherent argument that human readers could follow.
They further simplified the overall proof strategy.
In particular,
the authors introduced the function~$g_{b,c}$ in \cref{prop:general-even-bounds}
to unify the auxiliary functions that AI had initially proposed
for separate cases.

OpenAI Codex assisted with proof revisions,
computational checks,
and literature searches.
It substantially improved the efficiency of writing and editing,
especially the preparation and revision of tables and the \LaTeX{} source.

The authors have independently verified every result stated in this paper
and take full responsibility for its accuracy and content.

\bibliographystyle{plainnat}
\IfFileExists{../csf.bib}{\bibliography{../csf}}{\bibliography{csf}}

\end{document}